\newcommand{\ubar}{\underaccent{\bar}}
\numberwithin{equation}{section}
\newcommand{\R}{\mathbb{R}}
\newtheorem{theorem}{Theorem}[section]
\newtheorem{lemma}[theorem]{Lemma}
\newtheorem{proposition}[theorem]{Proposition}
\newtheorem{remark}[theorem]{Remark}
\def\m{\mu}
\def\s{\sigma}
\def\f{\frac}
\def\b{\bar}
\newcommand{\dd}{{\rm d}}
\newcommand{\CD}{\mathcal {D}}
\newcommand{\CE}{\mathcal{E}}
\newcommand{\CH}{\mathcal{H}}
\newcommand{\CI}{\mathcal{I}}
\newcommand{\CJ}{\mathcal{J}}
\newcommand{\CN}{\mathcal{N}}
\newcommand{\CR}{\mathcal{R}}
\newcommand{\la}{\lambda}
\newcommand{\de}{\delta}
\newcommand{\pa}{\partial}
\newcommand{\vep}{\varepsilon}
\begin{document}

\title[Ion-acoustic shock in a collisional plasma]{
Ion-acoustic shock in a collisional plasma
}

\author[R.-J. Duan]{Renjun Duan}
\address[RJD]{Department of Mathematics, The Chinese University of Hong Kong,
Shatin, Hong Kong, P.R.~China}
\email{rjduan@math.cuhk.edu.hk}

\author[S.-Q. Liu]{Shuangqian Liu}
\address[SQL]{Department of Mathematics, Jinan University, Guangzhou 510632, P.R.~China}
\email{tsqliu@jnu.edu.cn}

\author[Z. Zhang]{Zhu Zhang}
\address[ZZ]{Department of Mathematics, The Chinese University of Hong Kong,
Shatin, Hong Kong, P.R.~China}
\email{zzhang@math.cuhk.edu.hk}

\begin{abstract}
The paper is concerned with the propagation of ion-acoustic shock waves in a collision dominated plasma. We firstly establish the existence and uniqueness of a small-amplitude smooth travelling wave, then justify its approximation to the shock profile of the  KdV-Burgers equations in a suitable asymptotic regime where dissipation in terms of viscosity coefficient is much stronger than dispersion by the Debye length, and  prove in the end the large time asymptotic stability of travelling waves under suitably small smooth perturbations.
\end{abstract}


\subjclass[2010]{35Q35, 35C07, 35B35, 35B40}
\keywords{Collisional plasma; ion-acoustic shock; Navier-Stokes-Poisson equations; KdV-Burgers equations; nonlinear stability}

\date{\today}
\maketitle

\setcounter{tocdepth}{1}
\tableofcontents

\thispagestyle{empty}

\section{Introduction}

With a rapid phase transition occurring, shock wave is regarded as one of fundamental nonlinear phenomena in both gas dynamics and charged plasma. Historically, there have been a huge number of physical literatures and numerical experiments to understand the propagation of plasma shock waves \cite{NBS,JP,J,S}. In this paper, we carry out a mathematical study of existence and stability of a smooth shock profile for a model system of the Navier-Stokes-Poisson equations used to describe the dynamics of ions with the viscosity effect in the absence of magnetic fields. We also justify that the propagation of shock profiles is governed by the KdV-Burgers equations in a suitable regime, which coincides with those results in numerical simulations, cf.~\cite{GsKG,NBS}.

\subsection{Equations of motion}

Under the influence of a self-consistent electrostatic field, the dynamics of ions can be described by the following one-dimensional  Navier-Stokes-Poisson system:
\begin{equation}\label{1.1}
\left\{
\begin{aligned}
&\partial_t n+\partial_x(nu)=0,\\
&\partial_t(nu)+\partial_{x}(nu^{2}+Tn)=\mu\partial_{xx}u-n\partial_x\phi,\\
&-\lambda^{2}\partial_{xx}\phi=n-e^{\phi}.
\end{aligned}\right.
\end{equation}
Here $n=n(t,x)>0$ and $u=u(t,x)$  are respectively  the density and velocity for ions with $t\geq 0$ and $x\in \R$. The constants $T\geq0$, $\mu>0$ and $\lambda>0$ stand for the absolute temperature, viscosity coefficient and Debye length, respectively. Particularly, when $T=0$, the momentum equation is pressureless and \eqref{1.1} is usually used to model the motion of cold plasma. The electric potential $\phi=\phi(t,x)$ is induced by the total charge of ions 
and electrons. 
We have assumed that the density of electrons is determined by the Boltzmann relation $n_e=e^{\phi}$, 
cf.\cite{C, KT}. Such relation is a physical assumption according to the fact that lighter electrons get close to the equilibrium state at a much faster rate than heavier ions in plasma, and also it can be formally derived from the two-fluid model by taking the velocity of electrons as zero,
see \cite{H,GP}. To solve \eqref{1.1},
initial data are given by
\begin{align}\label{1.1-1}
[n,u](0,x)=[n_0,u_0](x),
\end{align}
with
\begin{align}\label{1.1-2}
\lim_{x\rightarrow\pm\infty}[n_0,u_0](x)=[n_{\pm},u_{\pm}].
\end{align}
The far-field data of $\phi$ are given by
\begin{align}\label{1.1-3}
\lim_{x\rightarrow\pm\infty}\phi(t,x)=\phi_{\pm},
\end{align}
with the  quasi-neutral condition $\phi_\pm=\log n_\pm$ at $x=\pm\infty$.


\subsection{Existence of shock profile}

In general the large time behavior of solutions to the one-dimensional Cauchy problem \eqref{1.1} and \eqref{1.1-1} is determined by the far-field data $[n_\pm,u_\pm,\phi_\pm]$ as given in \eqref{1.1-2} and \eqref{1.1-3}. In the paper, we are concerned with only the shock profile under the quasi-neutral condition $\phi_\pm=\log n_\pm$.
We first recall the definition of shock profiles briefly. 
Let  $[n,u,\phi]$ be a smooth travelling wave solution
of \eqref{1.1} which depends only on the variable $\xi=x-st$ with the wave speed $s$ to be determined later and connects the far-fields $[n_{\pm},u_{\pm},\phi_{\pm}]$ at $x=\pm\infty$ respectively. The profile is thereby governed by the following system of ODEs: 
\begin{equation}\label{1.2}
\left\{\begin{aligned}
&-s\f{\dd n}{\dd \xi}+\f{\dd (nu)}{\dd \xi}=0,\\
&-s\f{\dd(nu)}{\dd \xi}+\f{\dd (nu^2+Tn)}{\dd \xi}=\mu \f{\dd^2 u}{\dd \xi^2}-n\f{\dd \phi}{\dd \xi},\\
&-\lambda^2\f{\dd^2\phi}{\dd \xi^2}=n-e^{\phi},
\end{aligned}\right.
\end{equation}
with
\begin{align}\label{1.3}
\lim_{\xi\rightarrow \pm \infty}[{n}(\xi), {u}(\xi), {\phi}(\xi) ]=[n_{\pm}, u_{\pm}, \phi_{\pm}].
\end{align}
Note that by using the third equation of \eqref{1.2},
we can rewrite the last term on the right hand side of the momentum equation of  \eqref{1.2} as a conservative form,
so that system \eqref{1.2} is equivalent to
\begin{equation}\label{1.4}
\left\{
\begin{aligned}
&-s\f{\dd n}{\dd \xi}+\f{\dd(nu)}{\dd \xi}=0,\\
&-s\f{\dd(nu)}{\dd \xi}+\f{\dd(nu^2+Tn)}{\dd \xi}=\mu \f{\dd^2u}{\dd \xi^2}+\f{\dd}{\dd\xi}\left[\f{\lambda^2}{2}\left(\f{\dd\phi}{\dd \xi}\right)^2-e^{\phi}\right],\\
&-\lambda^2\f{\dd^2\phi}{\dd \xi^2}=n-e^{\phi},
\end{aligned}
\right.
\end{equation}
which consists of two conservation laws together with the Poisson equation for the electric potential.
By integrating the first two equations in \eqref{1.4} over $-\infty<\xi<\infty$, then using $\phi_\pm=\log n_\pm$,
and further assuming that all the first-order derivative terms vanish as $\xi\to\pm\infty$, we obtain 
the Rankine-Hugoniot conditions as follows:
\begin{equation}\label{1.6}
\left\{
\begin{aligned}
&-s(n_+-n_-)+n_+u_+-n_-u_-=0,\\
&-s(n_+u_+-n_-u_-)+n_+u^2_+-n_-u^2_-+(T+1)(n_+-n_-)=0.\\
\end{aligned}
\right.
\end{equation}
We point out that the R-H condition \eqref{1.6} is the same as the one for the following quasi-neutral Euler system:
\begin{equation}\label{1.7}
\left\{
\begin{aligned}
&\partial_t n+\partial_x(nu)=0,\\
&\partial_t(nu)+\partial_{x}\left(nu^{2}+(T+1)n\right)=0,
\end{aligned}\right.
\end{equation}
which can be 
formally obtained by letting $\mu=\lambda=0$ in \eqref{1.1}.

The 1(2)-shock profile is the travelling wave with compressibility, namely, 
 $n_+>n_-$ (
 $n_+<n_-$, respectively). In this paper, we will concern  only the 2-shock profile (shock profile henceforth) and assume without loss of generality that the upstream data are constants given by 
$
[n_-,u_-,\phi_-]=[1,0,0].
$

First of all, as for the existence of shock profiles of \eqref{1.1} with fixed constants $\mu>0$ and $\lambda>0$, we have

\begin{theorem}\label{thm1.1}
Let $T\geq 0$. For given data $[n_-,u_-]$ with $n_->0$, there exist positive constants 
$\hat{\vep}_0$, $\b{C}$, $\ubar{C}$, $\theta$ and $C_k$ 
$(k=0,1,\cdots)$
such that if $[n_+,u_+]$ satisfies \eqref{1.6} with $n_+<n_-$ and
$|n_+-n_-|\leq 
\hat{\vep}_0$, the problem \eqref{1.2} or equivalently \eqref{1.4} has a smooth traveling wave solution of the form $[\b{n},\b{u},\b{\phi}](x-st)$ connecting the far fields $[n_{\pm},u_{\pm},\phi_{\pm}]$ with $\phi_{\pm}=\log n_{\pm}$, which is unique up to a spatial shift and satisfies the following properties:
\begin{align}
\b{n}_x(x-st)=
{\f{\b{n}^2\b{u}_x(x-st)}{n_-|u_--s|}}
,\quad \ubar{C}\b{n}_x\leq\b{\phi}_{x}\leq \b{C}\b{n}_x<0, 
\label{1.1.1}
\end{align}
for any $x\in \mathbb{R}$ and $t>0$, and
\begin{equation}
\left|\f{
\dd^k}{\dd x^k}\left[\b{n}-n_{\pm},\b{u}-u_{\pm},\b{\phi}-\phi_{\pm}\right]\right|\\
\leq C_k|n_+-n_-|^{k+1}e^{-
{\theta}|n_+-n_-|\cdot|x-st|},
\label{1.1.2}
\end{equation}
for $x-st\lessgtr0$ and $k=0,1,\cdots$.
\end{theorem}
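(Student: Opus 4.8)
The plan is to recast the profile problem \eqref{1.2} as a three--dimensional autonomous ODE, exhibit a one--dimensional center (slow) manifold, and analyze the scalar Burgers--type equation to which the flow reduces on it. From the first equation of \eqref{1.2} and the far field at $\xi=-\infty$ one has the exact mass identity $\b{n}(\b{u}-s)=n_-(u_--s)=:-j$; for a weak $2$--shock $s>u_-$, so $j=n_-|u_--s|>0$ and $\b{u}=s-j/\b{n}$, which already gives the first identity in \eqref{1.1.1} after differentiation. Integrating the momentum equation of \eqref{1.4} once (using $n\phi_\xi=(e^\phi)_\xi-\tfrac{\lambda^2}{2}(\phi_\xi^2)_\xi$) and eliminating the boundary terms by the Rankine--Hugoniot relations \eqref{1.6}, and writing $w:=\phi_\xi$, one is left with
\begin{align*}
\frac{\dd n}{\dd\xi}&=\frac{n^2}{\mu j}\Big[\,j^2\Big(\frac1n-\frac1{n_-}\Big)+T(n-n_-)+e^\phi-n_--\frac{\lambda^2}{2}w^2\Big],\\
\frac{\dd\phi}{\dd\xi}&=w,\qquad \lambda^2\frac{\dd w}{\dd\xi}=e^\phi-n,
\end{align*}
an autonomous system on $\R^3$ whose rest points are $\Phi_\pm=(n_\pm,\log n_\pm,0)$. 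I regard $\vep:=n_--n_+\ge0$ as a parameter (with $n_+$ and $s$ then determined by \eqref{1.6}), so that $\Phi_+\to\Phi_-$ as $\vep\to0$.

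At $\vep=0$, linearizing at the (now unique) rest point $\Phi_-$ yields a characteristic polynomial of the form $-\rho\big(\rho^2+\tfrac{n_-}{\mu\sqrt{T+1}}\,\rho-\tfrac{n_-}{\lambda^2}\big)$, with roots $0$ and two real numbers $\rho_-<0<\rho_+$ of size $O(1)$; the center eigenvector is the quasi--neutral direction $\delta n=n_-\delta\phi$, $\delta w=0$, and the complementary two--dimensional (``fast'') dynamics is a hyperbolic saddle, so no oscillatory behaviour is possible. Adjoining $\dd\vep/\dd\xi=0$ and invoking the center manifold theorem, for $|\vep|$ small and in a fixed neighbourhood of $\Phi_-$ there is a $C^k$ (any fixed $k$) locally invariant one--dimensional manifold $\CM_\vep=\{(n,\Phi(n;\vep),W(n;\vep))\}$, which contains both $\Phi_-$ and $\Phi_+$ (admissible since $|\Phi_+-\Phi_-|=O(\vep)$) and on which the flow reduces to a scalar ODE $\dd n/\dd\xi=g(n;\vep)$. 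A short computation using $\lambda^2 w_\xi=e^\phi-n$ shows $\Phi(n;\vep)=\log n+o(1)$ and $W(n;\vep)=o(1)$ as $\vep\to0$, i.e.\ $\CM_\vep$ is a small perturbation of the quasi--neutral curve $\{\phi=\log n,\ w=0\}$.

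It remains to solve $\dd n/\dd\xi=g(n;\vep)$. Since $g(n_\pm;\vep)=0$ and $\partial_n g(n_-;0)=0$ (the vanishing center eigenvalue), Taylor expansion gives $g(n;\vep)=\beta(\vep)\,(n-n_-)(n-n_+)+O\big((|n-n_-|+\vep)^3\big)$ with $\beta(0)=\tfrac12\partial_n^2 g(n_-;0)=:\beta_0$; computing $\beta_0$ with $\phi,w$ slaved to $n$ identifies it as the genuine--nonlinearity coefficient of the quasi--neutral Euler flux \eqref{1.7}, and one must check $\beta_0\ne0$ together with the sign that renders $g<0$ on $(n_+,n_-)$, which is precisely what picks out $n_+<n_-$ as the compressive ($2$--shock) branch. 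Granting this, the scalar equation has a strictly decreasing heteroclinic $\b{n}(\xi)$ from $n_-$ at $\xi=-\infty$ to $n_+$ at $\xi=+\infty$, unique up to translation, with endpoint linearizations $|g'(n_\pm;\vep)|=|\beta(\vep)|\,\vep\,(1+o(1))$, hence $|\b{n}(\xi)-n_\pm|\lesssim\vep\, e^{-\theta\vep|\xi|}$ for $\pm\xi>0$. Setting $\b{\phi}=\Phi(\b{n};\vep)$ and $\b{u}=s-j/\b{n}$ recovers the full profile: $\b{\phi}_x=\Phi'(\b{n};\vep)\b{n}_x$ with $\Phi'(n;\vep)=1/n+o(1)$ confined to a fixed compact subinterval of $(0,\infty)$, together with $\b{n}_x<0$, gives $\ubar{C}\b{n}_x\le\b{\phi}_x\le\b{C}\b{n}_x<0$; and differentiating $\dd n/\dd\xi=g(n;\vep)$ repeatedly, using the $C^k$--regularity of $\CM_\vep$ and the fact that $g$ contributes one extra power of $\vep$ per derivative near the endpoints, yields \eqref{1.1.2} for all $k$. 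Uniqueness up to a shift follows because any small--amplitude profile is trapped on $\CM_\vep$, where the connecting orbit is unique modulo translation.

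The main obstacle is the identification of the leading reduced coefficient $\beta_0$ and, above all, the verification of its sign: this is the single point at which the compressive condition $n_+<n_-$ must be matched to the profile equation, and --- combined with the fact that the nonzero eigenvalues $\rho_\pm$ are real --- it is also what forces the profile to be monotone rather than nonexistent or oscillatory. The remaining quantitative work, namely the $C^k$--smooth dependence of $\CM_\vep$ on $(n,\vep)$ and the bootstrap giving the amplitude $\vep^{k+1}$ and the $\vep$--dependent decay rate in \eqref{1.1.2}, is technical but standard once the reduced equation and its endpoint linearizations are in hand.
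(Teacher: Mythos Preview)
Your proposal is correct and follows essentially the same Majda--Pego center-manifold reduction as the paper's own proof: the only cosmetic differences are that the paper works with $Z=e^\phi$ in place of $\phi$ and adjoins the shock speed $\tau$ (rather than the amplitude $\vep$) as the extra unfolding parameter. The sign verification you flag as the ``main obstacle'' is handled there by the direct computation $\dot g(0)=\tfrac{\sqrt{T+1}}{2\mu}(1-n_+)+O((1-n_+)^2)>0$, which is exactly the genuine-nonlinearity check you describe.
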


The existence of shock waves is a fundamental issue in the context of conservation laws and has attracted a lot of attentions. In particular, there have been a rather complete theory for the structure of classical shock waves. We mention the pioneer work by Gilbarg \cite{Gil} where the shock profile of the Navier-Stokes equations was constructed and the shock structure for small viscosity and heat-conductivity coefficients was also investigated. For system of conservation laws with uniform viscosity, a topological approach to construct the weak shock profile was discussed by Smoller and Conley \cite{SC1}. Later they adopted this approach to the MHD \cite{SC2}. In 1985, by the aid of center manifold theorem,  Majda-Pego \cite{M-P} built the connection between admissibility and structure of viscosity matrix and constructed the weak shock profile for a large class of viscous conservation laws. Their approach can be also used to deal with the relaxation shock structure problem, see \cite{YZ}. Indeed, our approach for the proof of Theorem \ref{thm1.1} is also based on the center manifold theorem as in \cite{M-P}.

Moreover, there have also been a huge number of literatures to study the structure of non-classical shocks in the context of combustion and MHD due to their physical significance and mathematical interests of analysis. In this direction, we mention two interesting works \cite{FS} and \cite{W}. In the end we point out that a nice and detailed introduction to the history of the shock structure problem can be found in the book by Dafermos \cite{D}.



\subsection{Formal KdV-Burgers approximation}

It is well-known (eg.~see Chapter 2 in \cite{LZ2}) that the propagation of shock profiles for viscous conservation laws such as the classical Navier-Stokes equations can be approximately described by the rather simple viscous Burgers equation in the weak shock regime. However, when the dispersive effect is involved, 
the generation of dispersive plasma shock waves has been observed in physical experiments and investigated by numerical simulations, for instance, \cite{GsKG,NBS}.
Due to the balance between dissipation and dispersion effects, plasma waves propagate like the profiles determined by the KdV-Burgers equations.
Motivated by this, we are further interested in understanding the structure of the obtained ion-acoustic shock waves through the KdV-Burgers approximation
from the mathematical point of view.

We re-set up  the problem \eqref{1.2} in the regime where both viscosity coefficient $\mu$ and Debye length $\la$ are small and depend on a small parameter $\vep>0$ by 
\begin{equation}
\label{def.scal}
\mu=\vep\b{\mu}, \quad 
\lambda=\vep^{1/2}\b{\lambda}
\end{equation}
for two constants $\b{\mu}$ and $\b{\lambda}$ of the same order as a typical length. Then the propagation of the shock profile $[n_{\vep},u_{\vep},\phi_{\vep}]$ obtained in Theorem \ref{thm1.1} can be described by the following system of ODEs with rescaled viscosity and Debye length:
\begin{equation}\label{1.1.2-1}
\left\{
\begin{aligned}
&-s_{\varepsilon}\f{\dd n_{\varepsilon}}{\dd \xi}+\f{\dd(n_{\varepsilon} u_{\varepsilon})}{\dd \xi}=0,\\
&-s_{\varepsilon}\f{\dd(n_{\varepsilon} u_{\varepsilon})}{\dd \xi}+\f{\dd(n_{\varepsilon} u_{\varepsilon}^{2}+Tn_{\varepsilon})}{\dd \xi}=\varepsilon\bar{\mu} \f{\dd^2 u_{\varepsilon}}{\dd \xi^2}-n_{\varepsilon}\f{\dd \phi_{\varepsilon}}{\dd \xi},\\
&-\varepsilon\bar{\lambda}^2\f{\dd^2\phi_{\varepsilon}}{\dd \xi^2}=n_{\varepsilon}-e^{\phi_{\varepsilon}},
\end{aligned}
\right.
\end{equation}
supplemented by the corresponding far-field data
\begin{align}\label{1.1.2-2}
\lim_{\xi\rightarrow \pm \infty}[n_{\varepsilon}(\xi), u_{\varepsilon}(\xi), \phi_{\varepsilon}(\xi) ]=[n_{\varepsilon,\pm}, u_{\varepsilon,\pm}, \phi_{\varepsilon,\pm}].
\end{align}
For simplicity, we further denote
\begin{equation}
\label{def.ratio}
\delta=
\frac{\bar{\lambda}^2}{\bar{\mu}^2}
\end{equation}
and introduce a scaled variable $z=
{\xi}/
\bar{\mu}
$ to the end.
Hence the system \eqref{1.1.2-1} can be equivalently rewritten as
\begin{equation}\label{1.1.2-3}
\left\{
\begin{aligned}
&-s_{\varepsilon}n_{\varepsilon}'+(n_{\varepsilon} u_{\varepsilon})'=0,\\
&-s_{\varepsilon}(n_{\varepsilon} u_{\varepsilon})'+(n_{\varepsilon} u_{\varepsilon}^{2}+Tn_{\varepsilon})'=\varepsilon u_{\varepsilon}''-n_{\varepsilon}\phi_{\varepsilon}',\\
&-\varepsilon\delta\phi_{\varepsilon}''=n_{\varepsilon}-e^{\phi_{\varepsilon}},
\end{aligned}
\right.
\end{equation}
where $'$ stands for the differential operator $\f{\dd}{\dd z}$. Note that the R-H condition \eqref{1.6} for the far fields remains the same 
in this formulation, and \eqref{1.1.2-3} is supplemented with the same far-field data as in \eqref{1.1.2-2}.  

Formally, we assume to have the following expansion of $[n_{\varepsilon},u_{\varepsilon},\phi_{\varepsilon}]$ near
the upstream constant equilibrium $[n_{\varepsilon,-},u_{\varepsilon,-},\phi_{\varepsilon,-}]=[1,0,0]$: 
\begin{equation}\label{3.1.1}
\left\{\begin{aligned}
&n_{\varepsilon}=1+\varepsilon n_{1}+\varepsilon^{2}n_{2}+\cdots,\\
&u_{\varepsilon}=\varepsilon u_{1}+\varepsilon^{2}u_{2}+\cdots,\\
&\phi_{\varepsilon}=\varepsilon \phi_{1}+\varepsilon^{2}\phi_{2}+\cdots.
\end{aligned}\right.
\end{equation}
To make the far fields 
compatible with the expansion in $\vep$, the shock speed
$s_{\varepsilon}$ and the downstream constant equilibrium $[n_{\varepsilon,+},u_{\varepsilon,+},\phi_{\varepsilon,+}]$ are 
also supposed to have the following asymptotic expansion:
\begin{equation}\label{3.1.2}
\left\{\begin{aligned}
s_{\varepsilon}&=\sqrt{T+1}+\vep s_1+\vep^2s_2+\cdots,\\
n_{\vep,+}&=1+\vep n_{1,+}+\vep^2 n_{2,+}+\cdots,\\
u_{\vep,+}&=\vep u_{1,+} +\vep^2 u_{2,+}+\cdots,\\
\phi_{\vep,+}&=\vep \phi_{1,+}+\vep^2 \phi_{2,+}+\cdots.
\end{aligned}\right.
\end{equation}
Here $\sqrt{T+1}$ in the first equation of \eqref{3.1.2} is equal to
the acoustic speed of the second family of characteristic field 
at $[n,u]=[1,0]$ for the quasi-neutral Euler system \eqref{1.7}, and it is the exact 
leading term of the asymptotic expansion of $s_{\vep}$ as $\vep\to0$.
For brevity, in what follows we shall make a simple choice of $s_\vep$ by
\begin{equation}\label{3.1.3}
s_\vep=\sqrt{T+1}-\vep,
\end{equation}
and parametrize $[n_{\varepsilon,+},u_{\varepsilon,+},\phi_{\varepsilon,+}]$ in terms of $\vep>0$ through the R-H condition \eqref{1.6}.
Here, due to $\vep>0$, one has $s_\vep<\sqrt{T+1}$, which is consistent with
the compressibility property of shock profiles. In fact, by substituting \eqref{3.1.3} into the R-H condition \eqref{1.6}, one can parametrize the downstream data $n_{\vep,+}$, $u_{\vep,+}$ and $\phi_{\vep,+}$ in terms of $\vep$ as follows:
\begin{equation}\label{3.1.4}
\left\{
\begin{aligned}
n_{\vep,+}&=\f{s^2_{\vep}}{T+1}=1-\vep\left(\f{2}{\sqrt{T+1}}-\f{\vep}{T+1}\right),\\
u_{\vep,+}&=s_\vep(1-\f1{n_{\vep,+}})=-\vep\left(2+\f{\vep}{\sqrt{T+1}-\vep}\right),\\
\phi_{\vep,+}&=\log n_{\vep,+}=-\vep\left(\f2{\sqrt{T+1}}-a_\vep\right).
\end{aligned}
\right.
\end{equation}
Here $a_\vep$ in the last line of  \eqref{3.1.4} is denoted by
\begin{equation*}
a_\vep=\frac{1}{\vep}\log\f{s^2_{\vep}}{T+1}+\f{2}{\sqrt{T+1}}.
\end{equation*}
By \eqref{3.1.3}, it is straightforward to verify that
\begin{equation*}
a_\vep=\frac{2}{\vep}\log \left(1-\frac{\vep}{\sqrt{T+1}}\right)+\frac{2}{\sqrt{T+1}} =-\frac{\vep}{T+1}-O(1)\vep^2.
\end{equation*}

Now we are in the position of deriving the equations for $[n_1,u_1,\phi_1]$ corresponding to the first-order terms in \eqref{3.1.1}. In fact, substituting \eqref{3.1.1} into \eqref{1.1.2-3} immediately yields to the vanishing zeroth order in $\vep$ and the subsequent orders as follows:
\begin{align}
\varepsilon^{1}:&-\sqrt{T+1}n_{1}'+u_{1}'=0,\label{3.1.5-1}\\
&-\sqrt{T+1}u_{1}'+Tn_{1}'=-\phi_{1}',\label{3.1.5-2}\\
&n_{1}-\phi_{1}=0,\label{3.1.5-3}\\
\varepsilon^{2}:&-\sqrt{T+1}n_{2}'+u_{2}'+n_{1}'+(n_{1}u_{1})'=0,\label{3.1.5-4}\\
&-\sqrt{T+1}u_{2}'+T n_{2}+u_1'-\sqrt{T+1}(n_{1}u_{1})'+2u_{1}u_1'\notag\\
&\qquad\qquad\qquad=-\phi_{2}'+ u_{1}''-n_{1}\phi_{1}',\label{3.1.5-5}\\
&n_{2}-\phi_{2}-\frac{1}{2}\phi_{1}^{2}=-\delta\phi_{1}'',\label{3.1.5-6}\\
\vep^3:&\cdots. \nonumber
\end{align}
From \eqref{3.1.5-1}, \eqref{3.1.5-2} and \eqref{3.1.5-3}, we solve $u_1$ and $\phi_1$ in terms of $n_1$ as 
\begin{equation}\label{3.1.6}
u_{1}=\sqrt{T+1}n_{1},\qquad \phi_{1}=
{n_1}.
\end{equation}
To further derive the equation for $n_1$, we differentiate \eqref{3.1.5-6} with respect to $z$, then multiply \eqref{3.1.5-4} by $\sqrt{T+1}$, and further add these two resultant equations to \eqref{3.1.5-5}, so it follows that  
\begin{align}\label{3.1.7}
\sqrt{T+1}n_1'+u_1'+2u_1u_1'-u_1''+n_1\phi_1'-\phi_1\phi_1'+\delta\phi_1'''=0.
\end{align}
Substituting \eqref{3.1.6} into \eqref{3.1.7}, one derives the equation for $n_1$:
\begin{equation}\label{3.1.8-1}
2\sqrt{T+1}n_1'+2(T+1)n_1n_1'-\sqrt{T+1}n_1''+\delta n_1'''=0,
\end{equation}
with the far fields
\begin{align}\label{3.1.8-1.1}
\lim_{z\rightarrow+\infty}n_{1}(z):=n_{1,+}=-\frac{2}{\sqrt{T+1}},\quad
\lim_{z\rightarrow -\infty}n_1(z):=n_{1,-}=0.
\end{align}
Note that \eqref{3.1.8-1.1} above matches \eqref{3.1.4} at the first order of $\vep$.
By \eqref{3.1.6} and \eqref{3.1.4} again, the equations for $u_1$ and $\phi_1$ are given by
\begin{equation}\label{3.1.8-2}
2u_1'+2u_1u_1'-u_1''+\f{\delta}{\sqrt{T+1}}u_1'''=0, \end{equation}
with
\begin{align}\label{3.1.8-2.1}
\lim_{z\rightarrow +\infty}u_1(z):=u_{1,+}=-2,\quad \lim_{z\rightarrow -\infty}u_1(z):=u_{1,-}=0,
\end{align}
and
\begin{equation}\label{3.1.8-3}
\sqrt{T+1}\phi_1'+2(T+1)\phi_1\phi_1'-\sqrt{T+1}\phi_1''+\delta \phi_1'''=0,
\end{equation}
with
\begin{align}\label{3.1.8-3.1}
\lim_{z\rightarrow +\infty}\phi_1(z):=\phi_{1,+}=-\f{2}{\sqrt{T+1}},\quad\lim_{z\rightarrow-\infty}\phi_1(z):=\phi_{1,-}=0,
\end{align}
respectively.

Note that if the dissipation is dominated, namely,
$
\delta=\b{\lambda}^2/\b{\mu}^2
$ is small enough,
then the unique (up to a shift) monotone shock profiles $n_1$, $u_1$ and $\phi_1$ of the KdV-Burgers equations
can be constructed as in \cite{BS}. For completeness, we will list the related results in Lemma \ref{lem3.1.0} in the Appendix.

\subsection{Rigorous justification of KdV Burgers approximation}

To make a rigorous justification of  the KdV-Burgers approximation to the ion-acoustic shock profiles obtained in Theorem \ref{thm1.1}, we start from the rescaled system \eqref{1.1.2-3} with the far fields \eqref{1.1.2-2}, where we have chosen $s_\vep$ as in \eqref{3.1.3}, the upstream equilibrium $[n_{\varepsilon,-},u_{\varepsilon,-},\phi_{\varepsilon,-}]=[1,0,0]$, and  the downstream equilibrium $[n_{\varepsilon,+},u_{\varepsilon,+},\phi_{\varepsilon,+}]$ as in \eqref{3.1.4}.
Note that by comparing the far fields $[n_{1,+},u_{1,+},\phi_{1,+}]$ to $[n_{\vep,+},u_{\vep,+},\phi_{\vep,+}]$, one has
\begin{equation*}
\frac{1}{\vep^2}\left\{[n_{\vep,+},u_{\vep,+},\phi_{\vep,+}]-[1,0,0]-\vep [n_{1,+},u_{1,+},\phi_{1,+}]\right\}
=[\f{1}{T+1},-\f{1}{\sqrt{T+1}-\vep},\frac{a_\vep}{\vep}]
\end{equation*}
with
\begin{equation*}
\frac{a_\vep}{\vep}=-\frac{1}{T+1}-O(1)\vep.
\end{equation*}
Therefore, one can see that it may not be a good ansatz to directly take $[1,0,0]+\vep [n_{1},u_{1},\phi_{1}]$ as the approximation of $[n_{\vep},u_{\vep},\phi_{\vep}]$ up to the first order for making the energy estimates on remainders in $L^2$ setting, because their far-field data cannot be matched. To overcome this trouble, we introduce the modified first-order approximation
$[n_{1,\vep},u_{1,\vep},\phi_{1,\vep}]$ satisfying
\begin{equation}\label{3.1.9}
\left\{
\begin{aligned}
&(2\sqrt{T+1}-\vep)n_{1,\vep}'+2(T+1)n_{1,\vep}n_{1,\vep}'-\sqrt{T+1}n_{1,\vep}''+\delta n_{1,\vep}'''=0,\\
&\left(2+\f{\vep}{\sqrt{T+1}-\vep}\right)u_{1,\vep}'+2u_{1,\vep}u_{1,\vep}'-u_{1,\vep}''+\f{\delta}{\sqrt{T+1}}u_{1,\vep}'''=0,\\
&\left(2\sqrt{T+1}-a_\vep (T+1)\right)\phi_{1,\vep}'+2(T+1)\phi_{1,\vep}\phi_{1,\vep}'-\sqrt{T+1}\phi_{1,\vep}''+
\delta\phi_{1,\vep}'''=0,
\end{aligned}
\right.
\end{equation}
with the far-field data:
\begin{equation}
\label{ad-ffdl}
\lim\limits_{z\to -\infty}[n_{1,\vep},u_{1,\vep},\phi_{1,\vep}](z)=[0,0,0],
\end{equation}
and
\begin{multline}
\lim\limits_{z\to \infty}[n_{1,\vep},u_{1,\vep},\phi_{1,\vep}](z)=\frac{1}{\vep}\{[n_{\vep,+},u_{\vep,+},\phi_{\vep,+}]-[1,0,0]\}\\
=[-\frac{2}{\sqrt{T+1}}+\frac{\vep}{T+1},-2-\f{\vep}{\sqrt{T+1}-\vep},-\frac{2}{\sqrt{T+1}}+a_\vep],\label{ad-ffdr}
\end{multline}
in terms of \eqref{3.1.4}. Note that compared to \eqref{3.1.8-1}, \eqref{3.1.8-2} and \eqref{3.1.8-3}, we have modified  the coefficients of $n_{1,\vep}'$, $u_{1,\vep}'$ and  $\phi_{1,\vep}'$ in \eqref{3.1.9}, respectively, according to the far-field conditions \eqref{ad-ffdl} and \eqref{ad-ffdr}.
Moreover, since the shock profile is invariant under a spatial shift, we further set
\begin{equation*}
n_{1,\vep}(0)=n_1(0),\ u_{1,\vep}(0)=u_1(0),\ \phi_{1,\vep}(0)=\phi_1(0)
\end{equation*}
without loss of generality.

We seek for the shock profile solution 
in the form:
\begin{equation}\label{3.1.10}
\left\{\begin{aligned}
&n_{\vep}=1+\vep n_{1,\vep}+\vep^2 n_{R}=1+\vep n_1+\vep^2(n_2+n_R),\\
&u_{\vep}=\vep u_{1,\vep}+\vep^2 u_{R}=\vep u_{1}+\vep^2(u_2+u_R),\\
&\phi_{\vep}=\vep\phi_{1,\vep}+\vep^2\phi_{R}=\vep\phi_1+\vep^2(\phi_2+\phi_R),
\end{aligned}\right.
\end{equation}
where $[n_2,u_2,\phi_2]$ is defined by
\begin{align}\label{n2}
[n_2,u_2,\phi_2]:=\vep^{-1}[n_{1,\vep}-n_1,u_{1,\vep}-u_1,\phi_{1,\vep}-\phi_1].
\end{align}
Note that $[n_2,u_2,\phi_2]$ is 
$O(1)$ in terms of Lemma \ref{lm3.1.1} in the Appendix, and
$$
\lim\limits_{z\to\pm \infty} [n_R,u_R,\phi_R]=[0,0,0].
$$
The key point is to establish uniform-in-$\vep$ estimates for the remainder $[n_R,u_R,\phi_R]$. For this purpose, we first derive the equation for $[n_R,u_R,\phi_R]$ from \eqref{1.1.2-3} as follows. In fact, integrating the first equation of $\eqref{1.1.2-3}$ from $-\infty$ to $z$ yields that
\begin{align}\label{3.1.11}
u_{\vep}=\f{s_{\vep}(n_{\vep}-1)}{n_{\vep}}.
\end{align}
Then from \eqref{3.1.11}, one can solve $u_R$ in terms of $n_R$ as
\begin{align}\label{3.1.u}
u_R=n_{\vep}^{-1}(s_{\vep}-\vep u_{1,\vep})n_{R}+\vep^{-1}n^{-1}_{\vep}(s_{\vep}n_{1,\vep}-u_{1,\vep}-\vep n_{1,\vep}u_{1,\vep}).
\end{align}
Similar for obtaining \eqref{1.4}, $[n_{\vep},u_{\vep},\phi_{\vep}]$ also satisfies the following system with two conservation laws:
\begin{equation}\label{3.0.3-1}
\left\{
\begin{aligned}
&-s_{\vep}n_{\vep}'+(n_{\vep}u_{\vep})'=0,\\
&-s_{\vep}(n_{\vep}u_{\vep})'+(n_{\vep}u_{\vep}^2+Tn_{\vep})'=\vep
u_{\vep}''+
\big(\f{1}{2}\vep\delta(\phi_{\vep}')^2-e^{\phi_{\vep}}\big)',\\
&-\vep\delta\phi_{\vep}''=n_{\vep}-e^{\phi_{\vep}}.
\end{aligned}\right.
\end{equation}
Substituting \eqref{3.1.11} into the second equation of \eqref{3.0.3-1} and integrating the resultant equation, one has
\begin{equation}\label{3.1.12}
\vep s_{\vep}n_{\vep}^{-2}n_{\vep}'=(T+1-s_{\vep}^2)(n_{\vep}-1)+s_{\vep}^2n^{-1}_{\vep}(n_{\vep}-1)^2
-\f{1}{2}\vep\delta(\phi_{\vep}')^2+\vep\delta\phi_{\vep}''.
\end{equation}
Here we have used the third equation of \eqref{3.0.3-1} to replace $e^{\phi_{\vep}}$ by $n_{\vep}+\vep\delta\phi_{\vep}''$. Plugging \eqref{3.1.10} into  equation \eqref{3.1.12} and the third equation of \eqref{3.0.3-1} simultaneously, one has the following system for $[n_{R},\phi_{R}]$:
\begin{equation}\label{3.1.13}
\left\{
\begin{aligned}
n_{R}'&=2\left(1+\sqrt{T+1}n_{1}(z)\right)n_{R}+\f{\delta}{\sqrt{T+1}}\phi_{R}''+r_1+r_2+r_3,\\
-\vep\delta\phi_{R}''&=n_{R}-\phi_{R}+r_{4}+r_{5}+r_{6},
\end{aligned}
\right.
\end{equation}
where the inhomogeneous terms $r_i$ ($1\leq i\leq 6$) are given by 
\begin{equation}\label{def.r1-6}
\left\{\begin{aligned}
&r_1:=(\frac{1}{\sqrt{T+1}}+n_{1,\vep})n_{1,\vep}'+\f{\delta(1+\vep n_{1,\vep})}{\vep\sqrt{T+1}}\{(1+\vep n_{1,\vep})\phi_{1,\vep}''-n_{1,\vep}''\}\\
&\qquad\ -\f{\delta(1+\vep n_{1,\vep})^2}{2\sqrt{T+1}}(\phi_{1,\vep}')^2,\\
&r_2=r_2[n_R,\phi_R]=O(\vep)\{|n_R|+{\delta}|\phi_R''|+|\phi_R'|+|n_R'|\},\\
&r_{3}=r_{3}[n_{R},\phi_R]
=O(\vep)|n_{R}|^2+O(\vep^2)\left\{|n_R|^2\cdot \big[|n_R|+|\phi_R'|+{\delta}|\phi_R''|\big]\right.\\
&\qquad\qquad\qquad\qquad\qquad\left.+{|n_R|\cdot\big[|\phi_R'|+\delta|\phi_R''|\big]}+|\phi_R'|^2\cdot \big[{1}+|n_R|^2\big]\right\},\\
&r_4=\vep^{-2}(1+\vep n_{1,\vep}-e^{\vep \phi_{1,\vep}})=O(1),\\
&r_5=r_5[\phi_R]:=(1-e^{\vep\phi_{1,\vep}})\phi_R=O(\vep) |\phi_R|,\\
&r_6=r_6[\phi_{R}]:=\vep^{-2}e^{\vep\phi_{1,\vep}}(1-e^{\vep^2\phi_R}+\vep^2\phi_R)=O(\vep^2)|\phi_R|^2.
\end{aligned}\right.
\end{equation}
For brevity of presentation, we put the explicit formulas of $r_2$ and $r_3$ into the Appendix, see \eqref{5.1} and \eqref{5.2} respectively. Moreover, one can see that solutions to the remainder system \eqref{3.1.13} are not unique due to the translation invariance of the shock profile.  Thus, to the end we set $n_{R}(0)=0$ without loss of generality.

Define a weight function
$$
w_{\alpha}=w_{\alpha}(z)=\exp\left\{\alpha\sqrt{1+|z|^{2}}\right\},
$$
for $\alpha>0$, and denote the weighted Sobolev space $H^{k}_{\alpha}$ for $k=0,1,\cdots$ as
\begin{equation*}
H^{k}_{\alpha}=\left\{f=f(z)\in H^{k}\left|w_{\alpha}\frac{\dd^i f}{\dd z^i}\in L^{2}, \ 0\leq  i\leq k\right.\right\},
\end{equation*}
associated with the norm
$$
\|f\|_{H^{k}_{\alpha}}=\left\{\sum_{i=0}^{k}\left\|w_{\alpha}\frac{\dd^i f}{\dd z^i}\right\|_{L^2}^2\right\}^{1/2}.
$$
For an integer $k\geq 2$ and $0<\alpha<2$,
we also define the following solution space for the remainder equations \eqref{3.1.13}:
\begin{align}\label{space}
\mathbf{X}_{\alpha,k}=\bigg\{U(z)=[n(z),\phi(z)]\bigg| n(0)=0, \|n\|_{H^{k}_{\alpha}}+\|\phi\|_{H^{k+2}_{\alpha}}<\infty\bigg\}
\end{align}
with the norm
\begin{align*}
\|U\|_{\mathbf{X}_{\alpha,k}}\doteq \|[n,\phi]\|_{H^{k}_{\alpha}}+\sqrt{\varepsilon\delta}\left\|\f{\dd^{k+1}\phi}{\dd z^{k+1}}\right\|_{L^{2}_{\alpha}}+\varepsilon\delta\left\|\f{\dd^{k+2}\phi}{\dd z^{k+2}}\right\|_{L^{2}_{\alpha}}
\end{align*}

With the above notations on hand, the main result concerning  the shock structure in terms of the KdV-Burgers approximation is stated as follows.

\begin{theorem}\label{thm1.2}
Let $T\geq 0$
and $0<\alpha<2$. There exist positive constants $\vep_0$, $\delta_0$, $\hat{c}_1$ and $\hat{c}_2$ such that if
\begin{equation}
\label{ad.ass.re}
0<\vep\leq\vep_0, \quad 0<\delta=\f{\b{\lambda}^2}{\b{\mu}^2}\leq \delta_0, \quad \hat{c}_1\vep\leq 1-n_{\vep,+}\leq \hat{c}_2\vep,
\end{equation}
then \eqref{1.1.2-3} with \eqref{1.1.2-2} admits a unique shock profile solution $[n_{\vep},u_{\vep},\phi_{\vep}]$ of the form 
\begin{align}\label{1.1.3}
[n_{\vep},u_{\vep},\phi_{\vep}]=[1,0,0]+\vep[n_1,u_1,\phi_1]+\vep^2[n_2+n_R,u_2+u_R,\phi_2+\phi_R],
\end{align}
where the first-order KdV-Burgers shock profiles $n_1$, $u_1$ and $\phi_1$ solve \eqref{3.1.8-1}, \eqref{3.1.8-2} and \eqref{3.1.8-3} respectively, the second-order correction $[n_2,u_2,\phi_2]$ is defined by \eqref{n2} in terms of $[n_1,u_1,\phi_1]$ and its modified approximation $[n_{1,\vep},u_{1,\vep},\phi_{1,\vep}]$ given by \eqref{3.1.9}, \eqref{ad-ffdl} and \eqref{ad-ffdr}, and the remainder $[n_R,u_R,\phi_R]$ is solved by \eqref{3.1.13} and 
\eqref{3.1.u} satisfying the estimates:
\begin{equation}
\label{thm1.2.estnp}
\left\|[n_R,\phi_R]\right\|_{\mathbf{X}_{\alpha,k}}\leq {C_k},
\end{equation}
and  
\begin{equation}
\label{thm1.2.estu}
\|u_R\|_{H^k_{\alpha}}\leq {C_k},
\end{equation}
for any integer $k\geq 2$, where each $C_k>0$ is a  generic constant independent of $\vep$ and $\delta$.
\end{theorem}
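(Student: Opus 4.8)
The plan is to prove Theorem \ref{thm1.2} by a fixed-point argument for the remainder system \eqref{3.1.13} set up in the weighted space $\mathbf{X}_{\alpha,k}$, combined with uniform-in-$\vep$, $\delta$ a priori energy estimates. First I would record the structural facts I intend to use: from Theorem \ref{thm1.1} (applied with $\mu=\vep\bar\mu$, $\lambda=\vep^{1/2}\bar\lambda$) the profile $[n_\vep,u_\vep,\phi_\vep]$ exists and its building blocks $n_1,u_1,\phi_1$ and $[n_2,u_2,\phi_2]$ are, by the Appendix lemmas (Lemma \ref{lem3.1.0}, Lemma \ref{lm3.1.1}, Lemma \ref{lm3.1.1}), smooth, monotone, and exponentially decaying to their far fields \eqref{3.1.8-1.1}--\eqref{3.1.8-3.1} uniformly in small $\delta$; in particular $1+\sqrt{T+1}\,n_1(z)$ transitions from $1$ at $z=-\infty$ to $-1$ at $z=+\infty$, so the linear coefficient $2(1+\sqrt{T+1}\,n_1)$ in the first equation of \eqref{3.1.13} is negative near $+\infty$ and positive near $-\infty$. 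This sign structure, together with the Burgers-type dissipation, is exactly the spectral stability input that makes the linearized operator invertible on the weight $w_\alpha$ for $0<\alpha<2$; I would phrase this as a linear solvability lemma: for given right-hand sides $g\in H^k_\alpha$, $h\in H^{k+2}_\alpha$ with suitable smallness/compatibility, the linear system $n_R'=2(1+\sqrt{T+1}n_1)n_R+\frac{\delta}{\sqrt{T+1}}\phi_R''+g$, $-\vep\delta\phi_R''=n_R-\phi_R+h$, $n_R(0)=0$, has a unique solution with $\|[n_R,\phi_R]\|_{\mathbf{X}_{\alpha,k}}\lesssim \|g\|_{H^k_\alpha}+\|h\|_{H^{k+2}_\alpha}$.

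The heart of the argument is the a priori estimate, which I would carry out by a weighted energy method. The natural move is to use the second (elliptic) equation of \eqref{3.1.13} to write $\phi_R$ in terms of $n_R$ via the resolvent $(1-\vep\delta\pa_z^2)^{-1}$, getting $\phi_R=(1-\vep\delta\pa_z^2)^{-1}(n_R+r_4+r_5+r_6)$ and $\phi_R''=(\vep\delta)^{-1}(\phi_R-n_R-r_4-r_5-r_6)$, so the first equation becomes a single scalar ODE/ODE-with-nonlocal-term for $n_R$; the term $\frac{\delta}{\sqrt{T+1}}\phi_R''=\frac{1}{\sqrt{T+1}\,\vep}(\phi_R-n_R-\cdots)$ combines with $2(1+\sqrt{T+1}n_1)n_R$ to produce the effective linear coefficient. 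I would multiply by $w_\alpha^2 n_R$ and integrate, controlling the dangerous $\vep^{-1}(\phi_R-n_R)$ contribution by exploiting that $\phi_R-n_R=\vep\delta\phi_R''+(\text{small } r\text{'s})$ so that $\frac{1}{\vep}\int w_\alpha^2 n_R(\phi_R-n_R)$ is actually $\delta\int w_\alpha^2 n_R\phi_R''+\ldots$, which after integration by parts yields the genuinely dissipative term $\delta\int w_\alpha^2(\phi_R')^2$ plus commutator terms handled by the weight (here $0<\alpha<2$ is used so that $w_\alpha'/w_\alpha$ stays small and the weighted Poincaré/absorption inequalities close). The inhomogeneous terms \eqref{def.r1-6} split into a genuinely $O(1)$ but $\vep$-independent and exponentially localized forcing $r_1+r_4$ (these are what force $[n_2,u_2,\phi_2]$ and are handled as fixed data in $H^k_\alpha$ for all $k$, using the Appendix decay bounds), and into $r_2,r_3,r_5,r_6$ which are $O(\vep)$ or higher, hence absorbable into the left side once $\|[n_R,\phi_R]\|_{\mathbf{X}_{\alpha,k}}$ is a priori bounded and $\vep_0$ is small. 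Differentiating the system $k$ times and repeating gives the higher-order estimates, with the norms in $\mathbf{X}_{\alpha,k}$ (including the $\sqrt{\vep\delta}\|\pa_z^{k+1}\phi_R\|$ and $\vep\delta\|\pa_z^{k+2}\phi_R\|$ pieces) emerging naturally from the elliptic regularity of $(1-\vep\delta\pa_z^2)^{-1}$; finally $u_R$ is recovered algebraically from \eqref{3.1.u} and its $H^k_\alpha$ bound \eqref{thm1.2.estu} follows by direct differentiation using $n_\vep$ bounded below away from zero.

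With the linear solvability lemma and the a priori estimate in hand, I would close by a standard contraction/continuity argument: define the map $\mathcal{T}$ sending $[\tilde n_R,\tilde\phi_R]$ to the solution $[n_R,\phi_R]$ of the linearized system with the nonlinear and forcing terms $r_i[\tilde n_R,\tilde\phi_R]$ frozen on the right, show $\mathcal{T}$ maps a ball $\{\|[n_R,\phi_R]\|_{\mathbf{X}_{\alpha,k}}\le C_k\}$ into itself (from the a priori bound, where the forcing contributes $O(1)$ and the rest is $O(\vep)$), and show it is a contraction in the lower norm $\mathbf{X}_{\alpha,2}$ (differences of the $r_i$ are $O(\vep)$ in that norm), so a unique fixed point exists; uniqueness in the class \eqref{1.1.3} follows because the full profile with the normalization $n_R(0)=0$ removes the translation freedom, and bootstrapping the fixed point through the higher estimates gives \eqref{thm1.2.estnp}--\eqref{thm1.2.estu} for every $k\ge 2$. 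The main obstacle I anticipate is the singular coupling: the term $\frac{\delta}{\sqrt{T+1}}\phi_R''$ is formally $O(1/\vep)$ after using the Poisson relation, so the dissipation one gains is only $\delta\int w_\alpha^2(\phi_R')^2$ (degenerating as $\delta\to0$), and one must show this, together with the sign of $2(1+\sqrt{T+1}n_1)$ away from the turning region and a careful treatment of the turning point where this coefficient vanishes, still yields a coercive estimate uniform in both $\vep$ and $\delta$ — this is where the smallness of $\delta$ (so that the dispersive/elliptic term is a genuine perturbation of the Burgers balance, exactly as in the construction of $n_1$ in \cite{BS}) and the restriction $0<\alpha<2$ both get used essentially.
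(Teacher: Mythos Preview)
Your overall architecture --- linear solvability lemma for \eqref{3.2.1}, estimates on the $r_i$, then an iteration in $\mathbf{X}_{\alpha,k}$ --- matches the paper's. The gap is in the a priori estimate itself, and it is not just a technical detail.

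First, multiplying the first equation by $w_\alpha^2 n_R$ and integrating does not give a coercive bound on $\|n_R\|_{L^2_\alpha}$: the coefficient $A(z)=2(1+\sqrt{T+1}\,n_1)$ changes sign, so $\int A\,w_\alpha^2 n_R^2$ has no sign, and there is no ``Burgers-type dissipation'' term in this scalar first-order equation to compensate. You flag this as the turning-point obstacle but do not resolve it. The paper bypasses it entirely at the lowest order: it writes the integral representation
\[
n(z)=\int_0^z e^{\int_{z'}^z A(\tau)\,\dd\tau}\Big[\tfrac{\delta}{\sqrt{T+1}}\phi''+h_1\Big](z')\,\dd z',
\]
and the sign structure $A(+\infty)=-2$, $A(-\infty)=2$ (together with $0<\alpha<2$) then gives $\|n\|_{L^2_\alpha}\le C\delta\|\phi''\|_{L^2_\alpha}+C\|h_1\|_{L^2_\alpha}$ directly --- the crucial point being the small factor $\delta$, not $1$, in front of $\|\phi''\|$.

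Second, your substitution runs in the wrong direction. Writing $\phi_R''=(\vep\delta)^{-1}(\phi_R-n_R-\cdots)$ manufactures a $1/\vep$ singularity which you then undo by re-inserting $\phi_R-n_R=\vep\delta\phi_R''+\cdots$; this is circular and lands you back at $\delta\int w_\alpha^2 n_R\phi_R''$, which after integration by parts gives $-\delta\int w_\alpha^2 n_R'\phi_R'+\cdots$, \emph{not} the dissipative $\delta\int w_\alpha^2(\phi_R')^2$ you claim. The paper's key idea (stated in \S1.5 and carried out around \eqref{3.2.8-1}--\eqref{3.2.9}) is the opposite substitution: in the dangerous cross term $(\delta\phi^{(k+2)},w_\alpha^2 n^{(k+1)})$ one uses the Poisson equation to replace $n^{(k+1)}$ by $\phi^{(k+1)}-\vep\delta\phi^{(k+3)}-h_2^{(k+1)}$, after which $(\delta\phi^{(k+2)},w_\alpha^2\phi^{(k+1)})$ integrates by parts to an $O(\delta)$ term that is absorbed for $\delta$ small. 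This cancellation, not any dissipation from $\phi$, is what closes the estimate uniformly in $\vep$ and $\delta$.
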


%

\begin{remark}
From \eqref{thm1.2.estnp} and \eqref{thm1.2.estu} we establish the uniform-in-$\vep$ estimates on derivatives of remainder $[n_R,u_R,\phi_R]$ up to any order, which justifies the KdV-Burgers approximation of the ion-acoustic shock profile for $\vep>0$ small enough with fixed $\delta>0$. Notice that our estimates are also uniform in $\delta>0$. Hence one can recover the classical Burgers approximation to the shock profile of the Navier-Stokes equations by letting $\delta\to 0+$ for fixed suitably small $\vep>0$.
\end{remark}

\begin{remark}
The assumption \eqref{ad.ass.re}
 implies that the dissipation dominates over the dispersion, which leads to a monotone structure of smooth shock waves, see
 Lemma \ref{lem3.1.0}. This is also consistent with the case of
 the KdV-Burgers equation for which the stable monotone shock profile exists only when the ratio $\b{\lambda}/\b{\mu}$ is suitably small, cf.~\cite{BS,Pego}. On the other hand, when the dispersion surpasses the dissipation, the oscillatory transition can be also observed in the plasma shock wave, cf.~\cite{NBS,GsKG}. In fact, it was proved
in \cite{BS} that the shock for the KdV-Burgers equation has a one-side oscillatory tail when $\delta$ is bigger than a threshold value. Thus it may be interesting to study whether one can construct the shock wave for the Navier-Stokes-Poisson system with an oscillatory structure through an approximation by the KdV-Burgers equations.
\end{remark}

\begin{remark}
Due to the fact that $[n_1,u_1,\phi_1]$ does not satisfy the far fields condition, a layer correction could be required to be additionally included if one would expect a much more accurate approximation beyond the KdV-Burgers equations. This will be left for our future work.
\end{remark}

We remark that the problem considered in Theorem \ref{thm1.2} can be regarded as an analogy of the singular perturbation problem. In fact, there has been some interesting studies to analyze the structure of shock profiles for the MHD or the combustion model with some small physical parameters from a dynamical system point of view. We list some related works \cite{BHLZ,FS,GSW} and references therein. In the end we also point out that when there is no viscosity, the KdV approximation to the Euler-Poisson equations was studied by Guo and Pu \cite{GPu}. 

\subsection{Dynamical stability of shock profile}

We shall also investigate the large time asymptotic stability of the smooth travelling shock profile obtained in Theorem \ref{thm1.1}. For convenience, we  formulate the Navier-Stokes-Poisson system in the {\it Lagrangian} coordinates which reads
\begin{equation}\label{4.2}
\left\{\begin{aligned}
&\pa_{t}v-\pa_{x}u=0,\\
&\pa_tu+T\pa_{x}v^{-1}=\mu\pa_{x}(v^{-1}\pa_{x}u)-v^{-1}\pa_{x}\phi,\\
&-\lambda^2\pa_{x}(v^{-1}\pa_{x}\phi)=1-ve^{\phi},\quad  t>0,x\in \mathbb{R}.
\end{aligned}\right.
\end{equation}
Here, $v=\frac{1}{n}$ is the specific volume. We keep on using variables $t$ and $x$ in the Lagrangian coordinates for brevity.
Initial data are given by
\begin{align}\label{4.3}
[v,u](0,x)=[v_0,u_0](x)\rightarrow[v_{\pm},u_{\pm}]\quad (x\to \pm\infty),
\end{align}
and the far fields of $\phi(t,x)$ are given by
\begin{equation}
\label{sta-pf}
\lim_{x\rightarrow \pm\infty }\phi(t,x)=\phi_{\pm}.
\end{equation}
Similar to the  case of Eulerian coordinates, we can also write the momentum equation into a conservative form. In fact, multiplying the third equation of \eqref{4.2} by $v^{-1}\pa_x\phi$ gives
\begin{align}
v^{-1}\pa_x\phi=\big[-\lambda^2\pa_x(v^{-1}\pa_x\phi)+ve^{\phi}\big]v^{-1}\pa_x\phi=\pa_x\left[-\f{\lambda^2}{2}(v^{-1}\pa_x\phi)^2+e^{\phi}\right].\nonumber
\end{align}
Substituting the above identity into the second equation of \eqref{4.2},  the system for $[v,u,\phi]$ is rewritten as
\begin{equation}\label{4.3c}
\left\{\begin{aligned}
&\pa_{t}v-\pa_{x}u=0,\\
&\pa_tu+(T+1)\pa_{x}v^{-1}=\mu\pa_{x}(v^{-1}\pa_{x}u)+
{\lambda^2\pa_x\left[\f{(v^{-1}\pa_x\phi)^2}{2}-v^{-1}\pa_x(v^{-1}\pa_x\phi)\right]},\\
&-\lambda^2\pa_{x}(v^{-1}\pa_{x}\phi)=1-ve^{\phi}.
\end{aligned}\right.
\end{equation}
From \eqref{4.3c}, the R-H 
condition is given by
\begin{equation}\label{4.3-1}
\left\{
\begin{aligned}
&-s(v_+-v_-)-(u_+-u_-)=0,\\
&-s(u_+-u_-)+(T+1)(v_+^{-1}-v_-^{-1})=0.
\end{aligned}\right.
\end{equation}
Since we  concern only the 2-shock profile in the paper, we assume the  compressibility $v_+>v_-$ in what follows.

In the context of gas dynamics and kinetic theory, the equivalence of shock profiles in the Eulerian coordinates and Lagrangian coordinates has been shown in \cite{HWWY}. The case of the Navier-Stokes-Poisson system is similar. Therefore, we re-state the properties of the shock profile  in the Lagrangian coordinates  in terms of Theorem \ref{thm1.1} as follows.

\begin{proposition}\label{prop1.5}
{Let $T\geq 0$.}
For given $[v_-,u_-]$ with $v_->0$, there exists  a positive constant 
{$\tilde{\vep}$} such that if $[v_+,u_+]$ satisfies \eqref{4.3-1} with $0<v_+-v_-\leq {\tilde{\vep}}$
then \eqref{4.2} has a unique (up to a shift) shock profile solution $[\b{v},\b{u},\b{\phi}](x-st)$ connecting the far fields $[v_{\pm},u_{\pm},\phi_{\pm}]$ with $\phi_{\pm}=-\log v_{\pm}$. Moreover, there exist  positive constants $\tilde{C}_1$, $\tilde{C}_2$, $C_k$ 
($k=0,1,2,\cdots$) and $\theta$, such that for each $t>0$ and $x\in \mathbb{R}$,
\begin{equation}\label{4.4}
\left\{
\begin{aligned}
&s\b{v}_x=-\b{u}_x>0, \quad -\tilde{C}_1\b{v}_x\leq\b{\phi}_{x}\leq -\tilde{C}_2\b{v}_x, \\
&\left|\f{
\dd^k}{\dd x^k}\left[\b{v}-v_{\pm},\b{u}-u_{\pm},\b{\phi}-\phi_{\pm}\right]\right|\leq C_k|v_+-v_-|^{k+1}e^{-{\theta}|v_+-v_-|\cdot |x-st|},\  x-st\lessgtr0.
\end{aligned}\right.
\end{equation}
\end{proposition}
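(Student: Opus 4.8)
The plan is to obtain Proposition~\ref{prop1.5} as a corollary of Theorem~\ref{thm1.1} by transporting the Eulerian shock profile into Lagrangian coordinates: no new construction is required, only the standard change of variables for viscous profiles together with a careful translation of the quantitative bounds. First I would record the algebraic dictionary between the two formulations. Setting $n_\pm=1/v_\pm$, one checks directly that the Rankine--Hugoniot system~\eqref{4.3-1} in the variables $(v,u)$ is exactly~\eqref{1.6} in the variables $(n,u)$, that $\phi_\pm=\log n_\pm$ becomes $\phi_\pm=-\log v_\pm$, and that the compressive condition $v_+>v_-$ coincides with $n_+<n_-$; in particular the wave speed $s$ is the same in both frames. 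Because along any such weak profile $n$ stays in a fixed compact subinterval of $(0,\infty)$, the amplitudes are comparable, $|v_+-v_-|\asymp|n_+-n_-|$, so the smallness hypothesis $0<v_+-v_-\le\tilde{\vep}$ forces $|n_+-n_-|\le\hat{\vep}_0$ once $\tilde{\vep}$ is chosen small, which is precisely the hypothesis of Theorem~\ref{thm1.1}.

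Next I would perform the change of variables. Let $[\bar n,\bar u,\bar\phi](\xi)$, $\xi=x-st$, be the profile supplied by Theorem~\ref{thm1.1}, and introduce the Lagrangian profile coordinate $y=y(\xi)$ by $\frac{\dd y}{\dd\xi}=\bar n(\xi)/n_-$ (the mass coordinate in the moving frame, normalized so as to preserve the speed $s$), with $y(0)=0$. Since $0<c_0\le\bar n\le C_0<\infty$, the map $\xi\mapsto y$ is an increasing bi-Lipschitz bijection of $\R$ onto itself. Putting $[\bar v,\bar u,\bar\phi](y)\eqdef[1/\bar n,\bar u,\bar\phi](\xi(y))$, one verifies by direct substitution, using the profile equations~\eqref{1.2}, that $[\bar v,\bar u,\bar\phi](x-st)$ solves the travelling-wave system associated with~\eqref{4.2} (equivalently~\eqref{4.3c}) with far fields $[v_\pm,u_\pm,\phi_\pm]$; this is the travelling-wave form of the Eulerian/Lagrangian equivalence established in~\cite{HWWY}, the Poisson equation transforming in the obvious way since $v^{-1}\partial_y\phi=\partial_\xi\phi$ under this rescaling. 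Uniqueness up to a spatial shift is inherited from Theorem~\ref{thm1.1}, the freedom in the integration constant $y(0)$ corresponding exactly to the translation freedom.

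It then remains to translate~\eqref{1.1.1}--\eqref{1.1.2} into~\eqref{4.4}. The identity $s\bar v_x=-\bar u_x$ is immediate from the first equation of~\eqref{4.2} on the travelling wave; since $\bar v=1/\bar n$ with $\bar n$ decreasing (hence $\bar v$ increasing) and $s>0$, this gives $s\bar v_x=-\bar u_x>0$. For the potential, the chain rule yields $\bar n_x=-\bar n^2\bar v_x$ pointwise along the profile, while the ratio $\bar\phi_x/\bar n_x$ is independent of the parametrization and, by~\eqref{1.1.1}, lies in $[\bar C,\ubar{C}]\subset(0,\infty)$; combining these with the two-sided bounds on $\bar n$ gives $-\tilde C_1\bar v_x\le\bar\phi_x\le-\tilde C_2\bar v_x$ for suitable $\tilde C_1,\tilde C_2>0$. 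For the exponential decay: since $y'(\xi)-n_\pm/n_-=(\bar n(\xi)-n_\pm)/n_-$ decays like~\eqref{1.1.2} with $k=0$, the inverse map $y\mapsto\xi(y)$ differs from an affine function by an exponentially small term near each end, with all derivatives decaying at rate $\asymp|v_+-v_-|$; writing $\bar v-v_\pm$, $\bar u-u_\pm$, $\bar\phi-\phi_\pm$ as smooth compositions and differentiating via the Fa\`a di Bruno formula, each $k$-th Lagrangian derivative becomes a finite sum of products of Eulerian derivatives of the profile and of $\xi(y)$, every factor carrying at least one power of the amplitude and decaying exponentially at rate $\asymp|v_+-v_-|$; collecting powers then produces~\eqref{4.4} with renamed constants $C_k,\theta$.

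The one step calling for genuine care --- bookkeeping rather than a conceptual obstacle --- is this last one: one must verify, by induction on $k$, that differentiating $1/\bar n$ and composing with the solution-dependent change of variables $\xi=\xi(y)$ degrades neither the amplitude power $|v_+-v_-|^{k+1}$ nor the exponential rate proportional to $|v_+-v_-|$. This is forced by the precise structure of~\eqref{1.1.1}--\eqref{1.1.2} --- in particular that the $k$-th derivative of the Eulerian profile carries exactly $k+1$ powers of the amplitude and that $\bar n$ is bounded away from $0$ --- but it should be written out explicitly rather than merely asserted; the rest is routine transcription.
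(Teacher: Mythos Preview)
Your approach is correct and is precisely what the paper does: it gives no independent proof of Proposition~\ref{prop1.5} but simply cites \cite{HWWY} for the Eulerian--Lagrangian equivalence of shock profiles (remarking that the Navier--Stokes--Poisson case is analogous) and then restates the conclusions of Theorem~\ref{thm1.1} in the new variables; your write-up supplies the details the paper omits.

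One small slip worth fixing: the shock speeds in the two frames do \emph{not} coincide in general. Using $\bar n(s_E-\bar u)\equiv n_-(s_E-u_-)$ from the first profile equation, the standard mass coordinate gives Lagrangian speed $s_L=n_-(s_E-u_-)$, and your rescaling $dy/d\xi=\bar n/n_-$ yields $s_E-u_-$, not $s_E$. The paper simply reuses the symbol $s$ for whichever speed the relevant R--H system (\eqref{1.6} or \eqref{4.3-1}) determines; nothing in your argument actually depends on the two values agreeing, so just drop the claim that the normalization ``preserves the speed $s$'' (or reduce to $u_-=0$ by Galilean invariance).
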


{Define new coordinates $(t,y)\doteq (t,x-st)$ and
formally introduce the anti-derivative variables as
\begin{equation}\label{1.1.4}
\left\{
\begin{aligned}
&\Phi(t,y)=\int_{-\infty}^{y}(v(t,y')-\b{v}(y'))\dd y',\\
&\Psi(t,y)=\int_{-\infty}^{y}(u(t,y')-\b{u}(y'))\dd y',
\end{aligned}
\right.
\end{equation}
and set
\begin{equation}
\label{def.id.PP}
\Phi_0(y)=\int_{-\infty}^y (v_0-\b{v})(y')\dd y',\quad \Psi_0(y)=\int_{-\infty}^y (u_0-\b{u})(y')\dd y'.
\end{equation}}
The main result about the dynamical stability of shock profiles is stated as follows.

\begin{theorem}\label{thm1.3}
Let $T>0$. Let $[v_{\pm},u_{\pm}]$ satisfy \eqref{4.3-1} with $\phi_{\pm}=-\log v_{\pm}$, and let the shock profile $[\b{v},\b{u},\b{\phi}]$ with the shock speed $s>0$ be obtained as in Proposition \ref{prop1.5}.  Assume
$
[v_0-\b{v},u_0-\b{u}]\in L^1\cap H^1
$
with
$$
\int_{-\infty}^{+\infty}(v_0-\b{v})(y)\dd y=\int_{-\infty}^{+\infty}(u_0-\b{u}) (y)\dd y=0.
$$
Let
$$
E_0\equiv\|[v_0-\b{v},u_0-\b{u}]\|_{H^1}^2+\|[\Phi_0,\Psi_0]\|_{L^2}^2.
$$
There exist positive constants 
$\tilde{\vep}_0$ and $e_0$ such that if
\begin{equation*}
{0<v_+-v_-}\leq {\tilde{\vep}_0}
,\quad E_0\leq{e_0},
\end{equation*}
then the Cauchy problem \eqref{4.2} together with \eqref{4.3} and \eqref{sta-pf} admit a unique global-in-time solution $[v,u,\phi](t,x)$ with
\begin{equation}
\label{thm1.3.est}
\CE(t)+\int_0^t\CD(\tau)\dd \tau\leq CE_0,
\end{equation}
for all $t\geq 0$,
where $\CE(t)$ and $\CD(t)$ are respectively denoted by
\begin{align}\label{E}
\CE(t)\equiv\|[\Phi,\Psi,\phi-\b{\phi}](t)\|_{H^2}^2
\end{align}
and
\begin{equation}\label{D}
\CD(t)\equiv\|
(s\b{v}\bar{v}_y)^{\frac{1}{2}}\Psi(t)\|_{L^2}^2+
\|[\Phi_y,\phi_t-\b{\phi}_t](t)\|_{H^1}^2+\|[\Psi_y,\phi-\b{\phi}](t)\|_{H^2}^2.
\end{equation}
Moreover, the solution $[v,u,\phi]$
tends in large time to the shock profile  in the following sense
\begin{align}\label{LT}
\lim_{t\rightarrow\infty}\sup_{y\in \mathbb{R}}\Big|\left[v(t,y)-\b{v}(y),u(t,y)-\b{u}(y),\phi(t,y)-\b{\phi}(y)\right]\Big|=0.
\end{align}
\end{theorem}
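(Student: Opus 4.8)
\textbf{Proof strategy for Theorem \ref{thm1.3}.}

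The plan is to prove the stability result by the classical anti-derivative (integrated) energy method adapted to the Navier-Stokes-Poisson setting. First I would recast the problem in the shifted coordinate $y = x-st$ and work with the anti-derivative variables $[\Phi,\Psi]$ introduced in \eqref{1.1.4}. Since $\int (v_0-\b v)\,\dd y = \int(u_0-\b u)\,\dd y = 0$, these anti-derivatives are well-defined and decay at $\pm\infty$; subtracting the profile equation \eqref{4.2} from the Lagrangian system and integrating once in $y$ yields a system of the schematic form
\begin{equation*}
\begin{aligned}
&\Phi_t - s\Phi_y - \Psi_y = 0,\\
&\Psi_t - s\Psi_y + (T+1)\big(\b v^{-1}(\b v + \Phi_y)^{-1} \text{-type terms}\big)\Phi_y = \mu\,(\text{viscous term}) + (\text{electric terms}) + Q,
\end{aligned}
\end{equation*}
where $Q$ collects quadratic-and-higher nonlinearities together with error terms carrying a factor of $\b v_y$ (hence $O(\tilde\vep)$), and the Poisson equation is rewritten for $\phi - \b\phi$ in terms of $\Phi_y$, $\Psi$ and $\phi-\b\phi$ itself. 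The elliptic structure of the third equation lets me control $\phi-\b\phi$ (and its derivatives, in the weighted-by-$\lambda^2$ sense captured in $\CD(t)$) by $\Phi_y$ plus lower-order contributions, so the potential is slaved to the hydrodynamic unknowns up to acceptable errors.

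The core is the a priori estimate \eqref{thm1.3.est}. I would set up the energy functional in three layers. \emph{Zeroth order:} multiply the $\Phi$-equation and $\Psi$-equation by suitable multipliers (essentially $-(T+1)\b v^{-2}\Phi$ and $\Psi$, mimicking the mechanical energy of the linearized system around the shock) and integrate; the key gain is the good sign of the compression term, which produces the damping $\|(s\b v\b v_y)^{1/2}\Psi\|_{L^2}^2$ appearing in $\CD(t)$ — this is exactly where the monotonicity $s\b v_y = -\b u_y > 0$ from Proposition \ref{prop1.5} is used. The viscosity produces $\|\Psi_y\|_{L^2}^2$ (equivalently $\|\Phi_t\|$-type control through the first equation). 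The Poisson contribution, after integration by parts and using $-\tilde C_1\b v_y \le \b\phi_y \le -\tilde C_2\b v_y$, contributes further good terms of size comparable to $\|\phi-\b\phi\|$ and, crucially, does \emph{not} destroy the sign of the leading quadratic form because $T>0$ keeps the effective sound speed $T+1$ strictly supersonic relative to the shock in the right way. \emph{First and second order:} differentiate the system in $y$ once and twice and repeat, now extracting $\|\Phi_y\|_{H^1}^2$, $\|\Psi_y\|_{H^2}^2$ and the parabolic dissipation on higher derivatives of $\Psi$ and of $\phi-\b\phi$. At each order the nonlinear and profile-error terms are absorbed: smallness of $E_0$ (via a continuity/bootstrap argument combined with local existence) controls the solution-size factors, and smallness of $\tilde\vep = v_+-v_-$ controls the profile-gradient factors, so all cross terms are either hidden in $\CE(t)$ (times a small constant) or in $\eta\CD(t)$ for small $\eta$. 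Summing the three layers with appropriate weights and integrating in time gives \eqref{thm1.3.est}.

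Finally, the large-time convergence \eqref{LT} follows in the standard way: the bound \eqref{thm1.3.est} gives $\int_0^\infty \CD(\tau)\,\dd\tau < \infty$ together with $\CE \in L^\infty$, and one checks $\tfrac{\dd}{\dd t}\CD(t) \in L^1$ (or at least that $\CD$ is uniformly continuous) using the equations again, so $\CD(t) \to 0$; in particular $\|[\Phi_y,\Psi_y](t)\|_{H^1}\to 0$ and $\|(\phi-\b\phi)(t)\|_{H^2}\to 0$, whence by Sobolev embedding $\|[v-\b v,u-\b u,\phi-\b\phi](t)\|_{L^\infty}\to 0$. I expect the main obstacle to be the energy estimate at the lowest order: ensuring that the quadratic form obtained after combining the mechanical multipliers with the Poisson coupling is genuinely positive-definite (so that the compression damping is not cancelled by the electric terms), which forces the hypothesis $T>0$ and a careful bookkeeping of the $\b\phi_y \sim \b v_y$ comparison from Proposition \ref{prop1.5}; a secondary technical point is handling the $\vep\delta$-weighted high-derivative norms of $\phi-\b\phi$ in $\CD(t)$ uniformly, which requires using the Poisson equation as an elliptic gain rather than treating $\phi$ as an independent evolving quantity.
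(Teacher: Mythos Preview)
Your strategy is essentially the paper's and is correct in outline, but two points deserve correction.

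First, your explanation of why $T>0$ is required is off: it has nothing to do with sound speed being ``supersonic relative to the shock.'' In the zeroth-order estimate, after multiplying by $(T+1)\Phi$ and $\bar v^2\Psi$, the electric term produces (up to errors) $-\lambda^2\tilde\phi_y\Psi_y$. The paper's key move is to use the \emph{mass equation} to write $\Psi_y=\Phi_t-s\Phi_y$, converting this into $-\lambda^2(\tilde\phi_y\Phi)_t+\lambda^2\tilde\phi_{ty}\Phi+s\lambda^2\tilde\phi_y\Phi_y$, and then to use the \emph{Poisson equation} to substitute $\Phi_y\approx\lambda^2\bar v^{-1}\tilde\phi_{yy}-\bar v\tilde\phi$ in the remaining products. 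The upshot is that the cross term $-\lambda^2\tilde\phi_y\Phi$ lands in the \emph{energy functional}, not in the dissipation, giving
\[
\CE_1(t)\supset\int_{\R}\Big(\tfrac{T+1}{2}\Phi^2+\tfrac{\lambda^4 e^{-\bar\phi}}{2\bar v}\tilde\phi_y^2-\lambda^2\tilde\phi_y\Phi\Big)\,\dd y.
\]
Since $e^{-\bar\phi}/\bar v=1+O(|v_+-v_-|)$, positive-definiteness of this quadratic form is exactly the discriminant condition $\lambda^4<(T+1)\lambda^4$, i.e.\ $T>0$. You anticipate the right obstacle (positivity at lowest order) but not the mechanism; the monotonicity comparison $\bar\phi_y\sim\bar v_y$ plays only a secondary role here.

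Second, your closing remark about ``$\vep\delta$-weighted high-derivative norms of $\phi-\bar\phi$ in $\CD(t)$'' is a confusion with Theorem~\ref{thm1.2}; there are no such weights in the stability problem. Here $\tilde\phi$ is controlled by direct elliptic estimates $\|\tilde\phi\|_{H^2}\lesssim\|\Phi_y\|_{H^1}$ from the Poisson equation, and $\tilde\phi_t$ by differentiating Poisson in $t$ and using $\tilde v_t=s\tilde v_y+\tilde u_y$.
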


\begin{remark}
The dynamical stability of the ion-acoustic shock profile in the case of $T=0$ remains left, see \eqref{ad.qfT} in the proof of Lemma \ref{lem4.2}.
\end{remark}

Besides the existence and structure stability, the dynamical stability of shock waves is another fundamental issue in viscous conservation laws and has attracted lots of mathematical interests. There have been extensive studies about this problem. Here we mention only a few related to our interest. In the context of gas dynamics, it was first studied by Matsumura and Nishihara \cite{MN} for the isentropic Navier-Stokes equations and later extended by Kawashima and Matsumura \cite{KM} to the heat-conductive case. Goodman \cite{G} developed a weighted energy method to treat the difficulties induced by interaction of the shock profile with waves in other families, and succeeded in proving the asymptotic stability of shock profiles for a general system of convex viscous conservation laws. In the work by {Szepessy and Xin} \cite{XZ}, the more accurate long-time ansatz was constructed in the case when the initial perturbation carry a non-zero mass. We also mention series of works \cite{L,LZ1,LZ2,Yu} collaborated by Liu, Yu and Zeng where the Green's functions of the linearized equations around the shock profiles have been constructed. Such approach in terms of  Green's functions not only  can establish the fine pointwise structure of solutions through the wave propagation, but also be widely applied to a variety of physical situations where complicated bifurcations may occur, for instance, \cite{LY} for the analysis of kinetic boundary layers in the regime of the critical Mach number. On the other hand, Zumbrun and his collaborators have developed a theory for stability or instability criteria of shock profiles, see \cite{MZ} and \cite{GK} for instance. These works have built a connection between the nonlinear stability and spectrum stability of shock profiles, and the criteria for the latter one can be verified or disproved by the aid of combination of the  classical energy method and the numerical or analytical computation of Evans functions. This approach has been justified to be quite effective in understanding a large class of non-classical shocks, such as detonation \cite{GK1}, viscoelasticity \cite{BMK}, MHD \cite{Z} and so on.

\subsection{Idea of the proof of main results}
We briefly state the ideas for the proof of Theorems \ref{thm1.1}, \ref{thm1.2} and \ref{thm1.3}. As mentioned before, the proof of Theorem \ref{thm1.1} is based on the centre manifold theorem as in \cite{M-P}. As for Theorem \ref{thm1.3},  the main efforts have been made to treat the extra effect of the self-consistent force on the energy estimates, compared to the case of the classical Navier-Stokes equations.

For the proof of Theorem \ref{thm1.2}, the key step is to obtain the uniform-in-$\vep$ estimates on the $k$th derivatives $(k\geq 2)$ of the solution to the linearized remainder system \eqref{3.2.1}. The difficulties come from the second order derivative term ${\delta\phi''}/{\sqrt{T+1}}$ on the right-hand side of the first equation of \eqref{3.2.1}. Specifically, when estimating ${\dd^kn}/{\dd z^k}$, the trouble terms like
$$
\left(\f{\delta}{\sqrt{T+1}}\f{\dd^{k+1}\phi}{\dd z^{k+1}}, {w_{\alpha}^2}\f{\dd^kn}{\dd z^k}\right)
$$
are hard to handle, due to the degeneracy of the Poisson equation when $\vep\rightarrow 0.$ To resolve them, we make an essential use of the structure of the Poisson equation. Indeed, our strategy is to use the Poisson equation to represent ${\dd^kn}/{\dd z^k}$ in terms of $\phi$, which leads to a crucial cancellation in this inner product term.  Also, this strategy is crucially used in our later stability analysis. Unfortunately, for the dynamical stability  problem, the principle part of the similar trouble term is involved in the energy functional $\CE_1(t)$, see $\eqref{E1}$. And, the restriction $T>0$ is essentially required to assure the positivity of $\CE_1(t)$. This is the reason why  the condition $T>0$  is necessary in Theorem \ref{thm1.3}.

\subsection{Organization and notations}
The rest of this paper is organized as follows. In Section \ref{sec2}, we shall prove Theorem \ref{thm1.1} for the construction of the shock profile in terms of \eqref{1.4} for the fixed viscosity coefficient $\mu>0$ and the fixed Debye length $\la>0$. In Section \ref{sec3}, we shall prove Theorem \ref{thm1.2} for the KdV-Burgers approximation to the shock profile. One key point there is to show Proposition \ref{lm3.2.1} for the existence of solutions to the linear inhomogeneous problem, particularly to obtain the estimate \eqref{3.2.2}. The smallness of $\de=\bar{\la}^2/\bar{\mu}^2$ plays an essential role in the analysis. In Section \ref{sec4}, we shall prove Theorem \ref{thm1.3} concerning the dynamical stability of the shock profile. In the Appendix, for completeness, we first list a lemma about the property of the KdV-Burgers shock profile, give the estimates on the error between $[n_1,u_1,\phi_1]$ and $[n_{1,\vep},u_{1,\vep},\phi_{1,\vep}]$, write down the explicit formula of remainder $r_2$ and $r_3$ defined in \eqref{def.r1-6}, and in the end show Lemmas \ref{lem4.4} and \ref{lem4.5} related to the higher order energy estimates.

\medskip
\noindent{\it Notations.} Throughout this paper, $C$ denotes some generic positive (generally large) constant and
$c$ denotes some generic positive (generally small) constant, where both C and $c$ may take different
values in different places. $\|\cdot\|_{L^p}$ stands for the $L^p_x$-norm $(1\leq p\leq \infty)$. Sometimes, we denote $(\cdot,\cdot)$ to be the inner product in $L^2_x$ for convenience. We also
use $H^k$ $(k\geq 0)$ to denote the usual Sobolev space with respect to $x$ variable.

\section{Existence for shock profiles of small amplitude}\label{sec2}

In this section, we construct the shock profile solution
$$
[\b{n}, \b{u}, \b{\phi}](x-st)
$$ to the system \eqref{1.1} for the fixed constant viscosity $\mu>0$ and Debye length $\lambda>0$. The starting point is to rewrite the Poisson equation 
as  a first-order ODE system for $[\phi,\phi']$ so that one can use the center manifold approach (cf.~\cite{M-P}) to treat the problem.

\medskip
\noindent{\it Proof of Theorem \ref{thm1.1}}. From the first equation of $\eqref{1.4}$, one can solve $u$ by $n$ as $u=s(1-n^{-1})$. Plugging it back into the second equation of $\eqref{1.4}$ and integrating the resultant equation from $-\infty$ to $\xi$, one obtains the following equivalent system
\begin{equation}\label{2.2}
\left\{
\begin{aligned}
\f{\dd n}{\dd \xi}&=\f{n^2}{\mu s}\bigg\{(T-s^2)(n-1)+\f{s^2(n-1)^2}{n}-\f{\lambda^2W^2}{2}+Z-1\bigg\},\\
\f{\dd Z}{\dd \xi}&=ZW,\\
\f{\dd W}{\dd \xi}&=-\lambda^{-2}(n-Z),
\end{aligned}
\right.
\end{equation}
with the far fields given by
\begin{align*}
\lim_{\xi\rightarrow \pm\infty}[n(\xi), Z(\xi), W(\xi)]=[n_{\pm}, Z_{\pm}, W_{\pm}]=[n_{\pm}, n_{\pm}, 0].
\end{align*}
Here we have denoted $Z=e^{\phi}$ and $W=\f{\dd\phi}{\dd \xi}$. For convenience, we introduce $U=[n,Z,W]$, $U_{\pm}=[n_{\pm},Z_{\pm},W_{\pm}]$ and write \eqref{2.2} in the form $\dot{U}=F(U)$, where $F(\cdot)$ denotes the vector field on the right hand of \eqref{2.2}. 
Borrowing the idea from \cite{M-P}, we introduce the following extended ODE system
\begin{equation}\label{2.4}
\left\{
\begin{aligned}
\dot{n}&=\f{n^2}{\mu \tau}\bigg\{(T-\tau^2)(n-1)+\f{\tau^2(n-1)^2}{n}-\f{\lambda^2W^2}{2}+Z-1\bigg\},\\
\dot{Z}&=ZW,\\
\dot{W}&=-\lambda^{-2}(n-Z),\\
\dot{\tau}&=0.
\end{aligned}\right.
\end{equation}
One can see that 
$[n_{\pm},Z_{\pm},W_{\pm},s]$ are the only two critical points of \eqref{2.4}. Now we fix $[n_-,Z_-,W_-,\sqrt{T+1}]$ as a reference state and construct the center manifold of \eqref{2.4} around this reference state. To do so, we calculate the Jacobian of \eqref{2.4} at the critical point $[n_-,Z_-,W_-,\sqrt{T+1}]$ as
$$
J=\left(
\begin{array}{cccc}
-\frac{1}{\mu\sqrt{T+1}} &\f{1}{\mu\sqrt{T+1}}& 0 & 0\\
0& 0& 1& 0\\
-\f{1}{\lambda^2}& \f{1}{\lambda^2} & 0 & 0\\
0 & 0 & 0 & 0
\end{array}
\right).
$$
The eigenvalues of $J$ are given by
\begin{align}
\s_1&=-\f{1}{2\mu\sqrt{T+1}}-\sqrt{\f{1}{4\mu^2(T+1)}+\f{1}{\lambda^2}}<0, \nonumber\\
\s_2&=\s_3=0,\nonumber\\
\s_4&=-\f{1}{2\mu\sqrt{T+1}}+\sqrt{\f{1}{4\mu^2(T+1)}+\f{1}{\lambda^2}}>0.\nonumber
\end{align}
One has two eigenvectors associated with the zero eigenvalue:
$$
\CR_1=(1,1,0,0), \quad  \CR_2=(0,0,0,1).
$$
Then by using the {\it Centre Manifold Theorem} (e.g. Proposition 3.2 in \cite{M-P}), we have the following 2-d manifold $M^*$ which is invariant by the flow \eqref{2.4}:
\begin{equation}
\left\{
\begin{aligned}
U&=U(\eta, \tau)=U_-+\eta[1,1,0] +\CH(\eta, \tau),\nonumber\\
\tau&=\tau,\nonumber
\end{aligned}
\right.
\end{equation}
with $|\eta|+|\tau-\sqrt{T+1}|\leq c $ 
for some constant $c>0$. 
Here
\begin{equation*}
\CH(\eta,\tau)=[\CH^1,\CH^2,\CH^3](\eta,\tau)
\end{equation*}
is the higher order term with
\begin{align}\label{2.6}
\CH(0,\sqrt{T+1})=\CH_{\eta}(0,\sqrt{T+1})=\CH_{\tau}(0,\sqrt{T+1})=0.
\end{align}
If $|n_+-1|$ is small enough, the equilibria $[U_-,s]$ and $[U_+,s]$ belong to $M^*$. Therefore, there exists $\eta_+<0$ such that $U_+=U_-+\eta_+[1,1,0]+\CH(\eta_+,s)$. Here the sign of $\eta_+$ follows from the compressibility. Pick up the following curve
\begin{align}\label{2.6-1}
U(\eta,\tau)|_{\tau=s}=[n(\eta),Z(\eta),W(\eta)]:=U_-+\eta[1,1,0]+\CH(\eta,s), 
\end{align}
for $\eta\in [\eta_+,0]$. It is direct to check that $U(\eta,s)$ is invariant by the flow \eqref{2.2}, so that 
${\dd U(\eta,s)}/{\dd\eta}$, the direction field of $U(\eta, s)$, is parallel to $F(U(\eta,s))$.
Therefore, \eqref{2.2} admits a trajectory connecting $U_-$ and $U_+$ from {$\xi=-\infty$ to $\xi=+\infty$}, if and only if the following ODE
\begin{equation}\label{2.8}
\left\{\begin{aligned}
&\f{\dd \eta}{\dd \xi}=g(\eta),-\infty<\xi<+\infty,\\
&\lim_{\xi\rightarrow -\infty}\eta(\xi)=0,\lim_{\xi\rightarrow +\infty}\eta(\xi)=\eta_+,
\end{aligned}\right.
\end{equation}
induces a well-defined trajectory $\eta=\eta(\xi)$ for all $\xi\in\R$, where $g(\eta)$ is determined by
\begin{align}\label{2.7}
g(\eta)U_{\eta}(\eta,s)=F(U(\eta,s)).
\end{align}
Therefore, in what follows we only focus on the existence of the solution to \eqref{2.8}. Note that since $\eta_+<0$, the standard ODE theory shows that \eqref{2.8} has a smooth solution if and only if $g(\eta)<0$ for all $\eta\in (\eta_+,0)$. Since $U_+$ is the unique state near $U_-$ such that $F(U_+)=0$, it follows from \eqref{2.7} that $g(\eta)$ vanishes only at two end points of $[\eta_+,0]$. To further show that $g(\eta)$ has the strictly negative sign in the open interval $(\eta_+,0)$, we calculate $\dot{g}(0)$ by differentiating \eqref{2.7} and then taking the inner product of the resulting equation  with $U_{\eta}(0,s)$ as
\begin{align}
\dot{g}(0)&=\f{U_{\eta}(0,s)dF(U_-)U_{\eta}(0,s)^T}{|{U}_{\eta}(0,s)|^2}\notag\\
&=\f{T+1}{2\mu s}(1-n_+)+O\left(H_{\eta}(0,s)\right)\nonumber\\
&=\f{{\sqrt{T+1}}}{2\mu }(1-n_+)+O\left((1-n_+)^2\right)>0,
\label{2.9}
\end{align}
for $|n_+-1|$ small enough. Here \eqref{2.6} has been used in the 
last equality 
in \eqref{2.9}. Therefore, it holds that $g(\eta)<0$ in $(\eta_+,0)$, 
so that \eqref{2.8} admits a smooth solution $\eta(\xi)$ for $\xi\in \R$. Let
$$
[\b{n},\b{u},\b{\phi}](\xi):=[n,s(1-n^{-1}),\log Z]\left(\eta(\xi)\right),
$$
where $n(\cdot)$ and $Z(\cdot)$ are defined in \eqref{2.6-1}. It is direct to check that $[\b{n},\b{u},\b{\phi}]$ solves \eqref{1.2} with \eqref{1.3}. The monotonicity \eqref{1.1.1} follows from the following computations:
\begin{equation*}
\left\{\begin{aligned}
&\f{\dd\b{n}}{\dd\xi} =\dot{n}(\eta(\xi))\f{\dd\eta}{\dd\xi}=\left(1+H^1_{\eta}(\eta(\xi),s)\right)\f{\dd\eta}{\dd\xi}\sim\f{\dd\eta}{\dd\xi}<0,\\
&\f{\dd\b{u}}{{\dd}\xi} ={\f{s}{\b{n}^2}\f{\dd\b{n}}{\dd\xi}}<0,\\
&\f{\dd\b{\phi}}{\dd\xi} =Z^{-1}\dot{Z}(\eta(\xi))\f{\dd\eta}{\dd\xi}=Z^{-1}\{1+H^2_{\eta}(\eta(\xi),s)\}\f{\dd\eta}{\dd\xi}\sim\f{\dd\b{n}}{\dd\xi}<0,
\end{aligned}
\right.
\end{equation*}
provided that $|n_+-1|$ is sufficiently small. Furthermore, to verify \eqref{1.1.2} for $k=0$, it follows  from \eqref{2.9} that
$$
|\eta(\xi)|\leq C{|\eta_+|}e^{-\theta(1-n_+)|\xi|}\leq C(1-n_+)e^{-\theta(1-n_+)|\xi|},\quad  \xi<0,
$$
with some constants $C>0$ and $\theta>0$ independent of $\xi$. Then it follows from \eqref{2.6-1} that
$$
\big|[\b{n}-1,\b{u},\b{\phi}](\xi)\big|\leq C(1-n_+)e^{-\theta(1-n_+)|\xi|}, \quad \xi<0.
$$
Similarly, one also has
$$
\big|[\b{n}-n_+,\b{u}-u_+,\b{\phi}-\phi_+](\xi)\big|\leq C(1-n_+)e^{-\theta(1-n_+)|\xi|},\quad  \xi>0. 
$$
This then proves \eqref{1.1.2} for $k=0$. Estimates on \eqref{1.1.2} with $k\geq 1$ for those high-order derivatives of $[\b{n},\b{u},\b{\phi}]$ can be similarly obtained by differentiating \eqref{1.2}, and the details of the proof are omitted for brevity. It is also straightforward to show the uniqueness (up to a shift) for the ODE system \eqref{1.2} and \eqref{1.3}. Therefore, we complete the proof of Theorem \ref{thm1.1}. \qed

\section{KdV-Burgers approximation to shock profiles}\label{sec3}

In this section we shall prove Theorem \ref{thm1.2} concerning the KdV-Burgers approximation of the smooth small-amplitude travelling shock profile under the scaling \eqref{def.scal} provided that $\delta>0$ given by \eqref{def.ratio} is small enough. As mentioned in Sections 1.3 and 1.4, it suffices to study the existence of solutions to the ODE system \eqref{3.1.13} for the remainders $n_R$ and $\phi_R$.

\subsection{Linear problem}
First of all, we start from the following linear inhomogeneous problem
\begin{equation}\label{3.2.1}
\left\{
\begin{aligned}
&\f{\dd n}{\dd z}=A(z)n+\f{\delta}{\sqrt{T+1}}\f{\dd^2 \phi}{\dd z^2}+h_1,\\
&-\varepsilon\delta\f{\dd^2\phi}{\dd z^2}=n-\phi+h_2,\\
&n(0)=0, \end{aligned}
\right.
\end{equation}
where $A(z):=2[1+\sqrt{T+1}n_1(z)].$ Recall the solution space 
{$\mathbf{X}_{\alpha,k}$}
in \eqref{space}. The following result is concerned with the solvability and estimates of \eqref{3.2.1}, which is a crucial step for further treating \eqref{3.1.13}.


\begin{proposition}\label{lm3.2.1}
Let $k\geq 2$ be an arbitrary integer and $0<\alpha<2$. There exist positive constants $\vep_1$ and $\delta_1$ such that if $0<\vep\leq \vep_1$, $0<\delta\leq \delta_1$, 
 and 
$$
\|h_1\|_{H^{k-1}_{\alpha}}+\|h_2\|_{H^{k+1}_{\alpha}}<\infty,
$$
then the linear ODE system \eqref{3.2.1} has a unique solution $U(z)=[n(z), \phi(z)]$ in $\mathbf{X}_{\alpha,k}$ satisfying the following estimate:
\begin{align}\label{3.2.2}
\|U\|_{\mathbf{X}_{\alpha,k}}\leq C\|h_1\|_{H^{k-1}_{\alpha}}+C\|h_2\|_{H^k_{\alpha}}+C\delta\left\|\f{\dd^{k+1}h_2}{\dd z^{k+1}}\right\|_{L^2_{\alpha}},
\end{align}
where $C>0$ is a generic constant independent of $\varepsilon$ and $\delta$.
\end{proposition}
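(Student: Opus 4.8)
The plan is to establish the a priori bound \eqref{3.2.2} first; since \eqref{3.2.1} is linear, uniqueness then follows by applying \eqref{3.2.2} to the difference of two solutions, and existence follows from a standard approximation argument sketched at the end, so the whole content is the derivation of \eqref{3.2.2}. The organizing idea, as outlined in the introduction, is to use the Poisson equation (the second line of \eqref{3.2.1}) to trade any derivative of $n$ for derivatives of $\phi$: for every $j\ge 0$,
\begin{equation}\label{plan.sub}
\frac{\dd^{j}n}{\dd z^{j}}=\frac{\dd^{j}\phi}{\dd z^{j}}-\varepsilon\delta\,\frac{\dd^{j+2}\phi}{\dd z^{j+2}}-\frac{\dd^{j}h_2}{\dd z^{j}},
\end{equation}
where the highest-order term on the right carries exactly the damping factor $\varepsilon\delta$ built into the $\mathbf{X}_{\alpha,k}$-norm.

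The main step is a weighted estimate of $\dd^{j}n/\dd z^{j}$ for $j=0,\dots,k$, coupled at each level with a weighted estimate for the Poisson equation. For $j=0$ I would integrate the first equation of \eqref{3.2.1} and invoke $n(0)=0$: the integral kernel $\exp\!\big(\int_{z'}^{z}A(y)\,\dd y\big)$ obeys $|\cdot|\le C\,e^{-2|z-z'|}$ — because $A(z)=2[1+\sqrt{T+1}\,n_1(z)]$ tends to $2$ at $-\infty$ and to $-2$ at $+\infty$ — so $n$ is, up to the source, a bounded convolution on $L^{2}_{\alpha}$ provided $\alpha<2$. For $1\le j\le k$, I would express $\dd^{j}n/\dd z^{j}$ from the $(j-1)$-st derivative of the first equation, multiply by $w_{\alpha}^{2}\,\dd^{j}n/\dd z^{j}$ and integrate over $z$; the commutators $[\dd^{j-1},A]n$ are lower order and handled inductively using the exponential decay of $n_1$ and its derivatives (Lemma~\ref{lem3.1.0}), and the source $\dd^{j-1}h_1/\dd z^{j-1}$ produces only $\|h_1\|_{H^{k-1}_{\alpha}}$ (this is why the first equation is never differentiated more than $k-1$ times). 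The one genuinely dangerous contribution is the coupling term $\frac{\delta}{\sqrt{T+1}}\big(\dd^{j+1}\phi/\dd z^{j+1},\,w_{\alpha}^{2}\,\dd^{j}n/\dd z^{j}\big)$, in which $\delta\,\dd^{j+1}\phi/\dd z^{j+1}$ has the same differential order as the singular term $\varepsilon\delta\,\dd^{j+1}\phi/\dd z^{j+1}$ of the Poisson equation and therefore cannot be bounded crudely as $\varepsilon\to0$. Here one substitutes \eqref{plan.sub} for $\dd^{j}n/\dd z^{j}$: the piece $(\dd^{j+1}\phi/\dd z^{j+1},w_{\alpha}^{2}\,\dd^{j}\phi/\dd z^{j})$ is an exact derivative equal to $-\tfrac12\int(w_{\alpha}^{2})'(\dd^{j}\phi/\dd z^{j})^{2}$, i.e.\ $O(\alpha)$; the piece $-\varepsilon\delta(\dd^{j+1}\phi/\dd z^{j+1},w_{\alpha}^{2}\,\dd^{j+2}\phi/\dd z^{j+2})=\tfrac{\varepsilon\delta}{2}\int(w_{\alpha}^{2})'(\dd^{j+1}\phi/\dd z^{j+1})^{2}$ is $O(\alpha)$ times the norm quantity $\varepsilon\delta\|\dd^{j+1}\phi/\dd z^{j+1}\|_{L^{2}_{\alpha}}^{2}$; and the piece $-(\dd^{j+1}\phi/\dd z^{j+1},w_{\alpha}^{2}\,\dd^{j}h_2/\dd z^{j})$, after one integration by parts, is bounded by $C\|\dd^{j+1}h_2/\dd z^{j+1}\|_{L^{2}_{\alpha}}\,\|\dd^{j}\phi/\dd z^{j}\|_{L^{2}_{\alpha}}$ plus an $O(\alpha)$ term. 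Multiplied by $\frac{\delta}{\sqrt{T+1}}$, these become $O(\alpha\delta)$ and $O(\delta)$ quantities — and the last one is precisely the source of $C\delta\|\dd^{k+1}h_2/\dd z^{k+1}\|_{L^{2}_{\alpha}}$ in \eqref{3.2.2}. This is the crucial cancellation: the bad occurrence of a high derivative of $\phi$ against $\dd^{j}n/\dd z^{j}$ is turned into an exact-derivative term, a multiple of a norm piece absorbed once $\delta$ is small, or a controlled source term.

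To close the estimate, I would in parallel differentiate the Poisson equation $j$ times ($0\le j\le k$), multiply by $w_{\alpha}^{2}\,\dd^{j}\phi/\dd z^{j}$ and integrate, obtaining $\|\dd^{j}\phi/\dd z^{j}\|_{L^{2}_{\alpha}}^{2}+\varepsilon\delta\|\dd^{j+1}\phi/\dd z^{j+1}\|_{L^{2}_{\alpha}}^{2}\lesssim \|\dd^{j}n/\dd z^{j}\|_{L^{2}_{\alpha}}^{2}+\|\dd^{j}h_2/\dd z^{j}\|_{L^{2}_{\alpha}}^{2}$ up to $O(\alpha)$ corrections, while the remaining top piece $\varepsilon\delta\|\dd^{k+2}\phi/\dd z^{k+2}\|_{L^{2}_{\alpha}}$ is read off directly from the $k$-th derivative of the Poisson equation via \eqref{plan.sub}. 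Summing the $n$- and $\phi$-estimates over $0\le j\le k$, taking $\alpha<2$ so that $w_{\alpha}$ is compatible with the decay rate of $n_1$, and then fixing $\varepsilon_{1},\delta_{1}$ small enough to absorb all the $O(\alpha\delta)$, $O(\varepsilon)$ and $O(\delta)$ contributions into the left-hand side, yields \eqref{3.2.2}. For existence, since the system is regular for $\varepsilon\delta>0$, I would solve it on truncated intervals $[-L,L]$ with appropriate endpoint conditions together with $n(0)=0$ by standard linear ODE theory, observe that the above estimates give $\mathbf{X}_{\alpha,k}$-bounds uniform in $L$, and pass to the limit by weak compactness, the far-field conditions and $n(0)=0$ being inherited from the uniform weighted bounds. (Alternatively, the a priori estimate together with the Fredholm alternative for the associated first-order linear ODE system yields existence at once.)

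The step I expect to be the main obstacle is exactly the treatment of the coupling term $\frac{\delta}{\sqrt{T+1}}\,\dd^{j+1}\phi/\dd z^{j+1}$: it is of the same differential order as the vanishing-viscosity term of the Poisson equation, so a uniform-in-$\varepsilon$ bound is impossible without exploiting structure. The resolution rests entirely on the substitution \eqref{plan.sub}, which converts this term into exact-derivative cancellations plus quantities carrying the damping factors of the $\mathbf{X}_{\alpha,k}$-norm, together with the smallness of $\delta=\bar{\lambda}^{2}/\bar{\mu}^{2}$. A secondary, milder difficulty is that $A(z)$ changes sign; this is sidestepped by estimating $n$ through the explicit integration of the first equation using $n(0)=0$ rather than by a naive energy estimate for $n$ itself.
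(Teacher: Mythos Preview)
Your a priori estimate strategy matches the paper's almost exactly: the integral representation of $n$ using $n(0)=0$ together with the sign structure of $A(z)$, the weighted energy estimates for the higher derivatives of $n$, the parallel estimates from the Poisson equation, and---crucially---the substitution \eqref{plan.sub} to tame the dangerous coupling term $\frac{\delta}{\sqrt{T+1}}\big(\dd^{j+1}\phi/\dd z^{j+1},\,w_\alpha^2\,\dd^{j}n/\dd z^{j}\big)$ are precisely the ingredients of Steps~1--2 of the paper's proof. The paper organizes these as a base case $k=2$ followed by induction on $k$, but the content is the same as your all-at-once presentation.

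The existence argument, however, differs. Instead of truncation and compactness or a Fredholm argument, the paper (Step~3) introduces an auxiliary parameter $\varepsilon'\in[0,1]$ multiplying the coupling, i.e.\ it studies the family $n'=A(z)n+\frac{\varepsilon'\delta}{\sqrt{T+1}}\phi''+h_1$ together with the unchanged Poisson equation. At $\varepsilon'=0$ the system decouples: $n$ is given explicitly by the integral formula, and $\phi$ is then obtained from the Poisson equation by Lax--Milgram. Since the a priori bound \eqref{3.2.2} holds with constants independent of $\varepsilon'$, one marches from $\varepsilon'=0$ to $\varepsilon'=1$ by a finite chain of contraction mappings of uniform step size $\varepsilon_0'\sim\sqrt{\varepsilon}$. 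This continuation scheme is more concrete than your sketch and avoids having to specify and justify boundary conditions for $\phi$ on truncated intervals or to verify a Fredholm index; your route is closer to standard elliptic theory and would also work, but would require spelling out the endpoint conditions for $\phi$ at $\pm L$ and checking that the resulting two-point problem is uniquely solvable before passing to the limit.
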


\begin{proof}
We divide the proof by 
three steps.

\medskip
\noindent\underline{Step 1}. In this step, we treat only the a priori estimates of solutions for the case $k=2$, that is to prove that any smooth solution $U(z)=[n(z),\phi(z)]$ to the system \eqref{3.2.1} enjoys the estimate \eqref{3.2.2} with $k=2$.
First of all, we estimate $n$ as follows. From the first equation of \eqref{3.2.1}, we can represent $n$ as \begin{align}\label{n}
n(z)=\int_0^ze^{\int_{z'}^zA(\tau)\dd \tau}\left[\f{\delta\phi''}{\sqrt{T+1}}+h_1\right](z')\dd z'.
\end{align}
It is direct to check that
$$\lim_{z\rightarrow+\infty}A(z)=-2<0, \quad\lim_{z\rightarrow-\infty}A(z)=2>0.$$ Then 
from \eqref{n}, we have
\begin{align}\label{3.2.3}
\|n\|_{L^{2}_{\alpha}}\leq C\delta\|\phi'' \|_{L^{2}_{\alpha}}+C\|h_1\|_{L^{2}_{\alpha}},
\end{align}
for $\alpha\in (0,2)$. Here we emphasize that the constant $C>0$ is independent of $\vep$ and $\delta$. Then, again from  the first equation of \eqref{3.2.1}, one has
\begin{align}
\|n'\|_{L^2_{\alpha}}&\leq {|A|_{L^{\infty}}}\|n\|_{L^{2}_{\alpha}}+C\delta\|\phi''\|_{L^2_{\alpha}}+C\|h_1\|_{L^2_{\alpha}}\leq
C\delta\|\phi''\|_{L^2_{\alpha}}+C\|h_1\|_{L^2_{\alpha}}.\label{3.2.4}
\end{align}
Next, we turn to estimate $\phi$. Taking the inner product of the second equation of $\eqref{3.2.1}$ with $w^2_{\alpha}\phi$, one has
\begin{align}
\|\phi\|_{L^2_{\alpha}}^2=(\vep\delta\phi'',w^2_{\alpha}\phi)+(n+h_2, w^2_{\alpha}\phi).\nonumber
\end{align}
By Cauchy-Schwarz, the second inner product term is bounded by
$$
|(n+h_2,w^2_{\alpha}\phi)|\leq \eta\|\phi\|^2_{L^2_{\alpha}}+C_{\eta}\{\|n\|_{L^2_{\alpha}}^2+\|h_2\|_{L^2_{\alpha}}^2\},
$$
with an arbitrary constant $0<\eta<1$ to be chosen later. As to the first inner product term, it holds from integration by parts and Cauchy-Schwarz that
$$
\begin{aligned}
(\vep\delta\phi'',w^2_{\alpha}\phi)&=-\vep\delta\|\phi'\|_{L^2_{\alpha}}^2+(\vep\delta\phi',-2w_{\alpha}w_{\alpha}'\phi)\\
&\leq -\vep\delta\|\phi'\|_{L^2_{\alpha}}^2+\eta\vep\delta\|\phi'\|_{L^2_{\alpha}}^2
+C_{\eta}\vep\delta\|\phi\|_{L^2_{\alpha}}^2.
\end{aligned}
$$
Therefore, by taking $\eta>0$ suitably small, one has
\begin{align}\label{3.2.5}
\|\phi\|_{L^2_{\alpha}}^2+\vep\delta\|\phi'\|_{L^2_{\alpha}}^2\leq C\|n\|_{L^2_{\alpha}}^2+C\vep\delta
\|\phi\|_{L^2_{\alpha}}^2+C\|h_2\|_{L^2_{\alpha}}^2.
\end{align}
Similarly, taking the inner product of the second equation of $\eqref{3.2.1}$ with $w^2_{\alpha}\phi''$ and integrating by parts, one has
\begin{align}\label{3.2.6}
\|\phi'\|_{L^2_{\alpha}}^2+\vep\delta\|\phi''\|_{L^2_{\alpha}}^2\leq C\{\|\phi\|_{L^2_{\alpha}}^2+
\|n\|_{H^1_{\alpha}}^2+\|h_2\|_{H^1_{\alpha}}^2\}.
\end{align}

In what follows it is necessary to get the uniform-in-$\vep$ estimate for $\|[n'',\phi'']\|_{L^2_{\alpha}}$. Differentiating the first equation of $\eqref{3.2.1}$ with respect to $z$ and taking the inner product of the resultant equation with $w_{\alpha}^2n''$, one has
\begin{align}\label{3.2.7}
\|n''\|_{L^2_{\alpha}}^2=\big(A'n+An'+h_1',w^2_{\alpha}n''\big)+\delta\sqrt{T+1}^{-1}(\phi''',w^2_{\alpha}n'').
\end{align}
By Cauchy-Schwarz, the first inner product term is bounded by
$$
\eta\|n''\|_{L^2_{\alpha}}^2+C_{\eta}\{\|n\|_{H^1_{\alpha}}^2+\|h_1'\|_{L^2_{\alpha}}\}
$$
with an arbitrary constant $0<\eta<1$ to be chosen later. To estimate the last term on the right-hand side of \eqref{3.2.7}, we firstly differentiate the second equation of $\eqref{3.2.1}$ twice and then solve $n''$ as
\begin{align}\label{3.2.8-1}
n''(z)=\phi''(z)-\vep\delta\phi''''(z)-h_2''(z).
\end{align}
Thus, applying 
\eqref{3.2.8-1}, one has
\begin{align}\label{3.2.9}
(\delta\phi''',w_{\alpha}^2n'')=(\delta\phi''',w_{\alpha}^2\phi'')+(\delta\phi''',-\vep\delta w_{\alpha}^2\phi'''')+(\delta\phi''',-w_{\alpha}^2h_2'').
\end{align}
By integration by parts, the first term on the right is bounded as
$$
|(\delta\phi''',w_{\alpha}^2\phi'')|=|(\delta\phi'',w_{\alpha}w_{\alpha}'\phi'')|\leq C\delta\|\phi''\|_{L^2_{\alpha}}^2,
$$
the second term is bounded {as} 
$$
{|(\delta\phi''',-\vep\delta w_{\alpha}^2\phi'''')|=\vep\delta^2|(\phi''',w_{\alpha}w_{\alpha}'\phi''')|\leq C\vep\delta^2\|\phi'''\|_{L^2_{\alpha}}^2,}
$$
and the last term is bounded as
$$
|(\delta\phi''',-w_{\alpha}^2h_2'')|=|(\delta\phi'',w_{\alpha}\{w_{\alpha}h_2'''+2w_{\alpha}'h_2''\})|\leq \eta
\|\phi''\|_{L^2_{\alpha}}^2+C_{\eta}\delta^2\|h_{2}''\|_{H^1_{\alpha}}^2.
$$
This completes all estimates on the right-hand side of \eqref{3.2.9}. Plugging those estimates back to \eqref{3.2.7}, one has
\begin{multline}
\|n''\|_{L^2_{\alpha}}\leq C\{\|n\|_{H^1_{\alpha}}+(\sqrt{\delta}+\sqrt{\eta})\|\phi''\|_{L^2_{\alpha}}+\sqrt{\vep}\delta\|\phi'''\|_{L^2_{\alpha}}\}\\
+C\|h_1'\|_{L^2_{\alpha}}+C_{\eta}\delta\|h_2''\|_{H^1_{\alpha}}.
\label{3.2.10}
\end{multline}
Here the constant $0<\eta<1$ can be chosen  small enough. For the estimate of $\|\phi''\|_{L^2_{\alpha}}$, we take the inner product of \eqref{3.2.8-1} with $w_{\alpha}^2\phi''$, which yields that
\begin{align}\label{3.2.10-1}
\|\phi''\|_{L^2_{\alpha}}^2=(\vep\delta\phi'''',w_{\alpha}^2\phi'')+(h_2'',w_{\alpha}^2\phi'')+(n'',w_{\alpha}^2\phi'').
\end{align}
From integration by parts again, one has
\begin{align*}
(\vep\delta\phi'''',w_{\alpha}^2\phi'')&=-\vep\delta\|\phi'''\|_{L^2_{\alpha}}^2+(-\vep\delta\phi''',2w_{\alpha}w_{\alpha}'\phi'')\\
&\leq -\vep\delta\|\phi'''\|_{L^2_{\alpha}}^2+\eta\|\phi''\|_{L^2_{\alpha}}^2+C_{\eta}\vep^2\delta^2\|\phi'''\|_{L^2_{\alpha}}^2.
\end{align*}
Moreover, by Cauchy-Schwarz, the last two terms on the right-hand side of \eqref{3.2.10-1} is bounded by
$$
\eta\|\phi''\|_{L^2_{\alpha}}^2+C_{\eta}\{\|n''\|_{L^2_{\alpha}}+\|h_2''\|_{L^2_{\alpha}}^2\}.
$$
Therefore, by collecting all estimates and taking $0<\eta<1$ suitably small, it follows from \eqref{3.2.10-1} that
\begin{align}\label{3.2.11}
\|\phi''\|_{L^2_{\alpha}}+\sqrt{\vep\delta}\|\phi'''\|_{L^2_{\alpha}}\leq C\vep\delta\|\phi'''\|_{L^2_{\alpha}}+C\{\|n''\|_{L^2_{\alpha}}+\|h_2''\|_{L^2_{\alpha}}\}.
\end{align}
Furthermore, taking the inner product of \eqref{3.2.8-1} with 
$\vep\delta\phi''''w_{\alpha}^2$, one has
\begin{align}\label{3.2.12}
\vep\delta\|\phi''''\|_{L^2_{\alpha}}\leq C\{\|\phi''\|_{L^2_{\alpha}}+\|n''\|_{L^2_{\alpha}}+\|h_2''\|_{L^2_{\alpha}}\}.
\end{align}
Finally, a suitable linear combination of \eqref{3.2.3}, \eqref{3.2.4}, \eqref{3.2.5}, \eqref{3.2.6}, \eqref{3.2.10}, \eqref{3.2.11} and \eqref{3.2.12} yields that
\begin{align}
&\|[n,\phi]\|_{H^2_{\alpha}}+\sqrt{\vep\delta}\|\phi'''\|_{L^2_{\alpha}}+\vep\delta\|\phi''''\|_{L^2_{\alpha}}\notag\\
&\leq C(\delta+\sqrt{\delta})\|\phi''\|_{L^2_{\alpha}}+C\sqrt{\vep\delta}(\sqrt{\delta}+\sqrt{\vep\delta})\|\phi'''\|_{L^2_{\alpha}}\notag\\
&\quad +C\{\|h_1\|_{H^1_{\alpha}}+\|h_2\|_{H^2_{\alpha}}+\delta\|h_2'''\|_{L^2_{\alpha}}\}.\label{3.2.13}
\end{align}
Therefore, \eqref{3.2.2} with $k=2$ follows from  \eqref{3.2.13} by taking $\delta>0$ and $\vep>0$ suitably small.

\medskip
\noindent\underline{Step 2}. In this step, we use the induction argument to show that the estimates \eqref{3.2.2} is valid for any $k\geq 2$. Notice that \eqref{3.2.2} for $k=2$ has been proved in Step 1. Assume that this is valid for $k\geq 2$. Differentiating the first equation of \eqref{3.2.1} $k$-times with respect to $z$ yields 
\begin{align}\label{nk}
\f{\dd^{k+1} n}{\dd z^{k+1}}=\sum_{0\leq k'\leq k}\f{\dd^{k'} A}{\dd z^{k'}}\cdot\f{\dd^{k-k'}n}{\dd z^{k-k'}}+\f{\delta}{\sqrt{T+1}}\f{\dd^{k+2}\phi}{\dd z^{k+2}}+\f{\dd^{k}h_1}{\dd z^k}.
\end{align}
Taking the inner product of \eqref{nk} with $w_{\alpha}^2\f{\dd^{k+1}n}{\dd z^{k+1}}$, we have
\begin{align}\label{nk1}
\left\|\f{\dd^{k+1}n}{\dd z^{k+1}}\right\|_{L^2_{\alpha}}^2=&\sum_{0\leq k'\leq k}\left(\f{\dd^{k+1}n}{\dd z^{k+1}}\text{ },\text{ }w_{\alpha}^2\f{\dd^{k'} A}{\dd z^{k'}}\cdot\f{\dd^{k-k' }n}{\dd z^{k-k'}}\right)+\left(\f{\dd^kh_1}{\dd z^k}\text{ },\text{ }w_{\alpha}^2\f{\dd^{k+1}n}{\dd z^{k+1}}\right)\nonumber\\
&
+\left(\f{\delta}{\sqrt{T+1}}\f{\dd^{k+2}\phi}{\dd z^{k+2}}\text{ },\text{ }w_{\alpha}^2\f{\dd^{k+1}n}{\dd z^{k+1}}\right):=J_1+J_2+J_3.
\end{align}
Using \eqref{3.1.9-1}, the first inner product term on the right is bounded as
$$\begin{aligned}
|J_1|&\leq C\sum_{0\leq k'\leq k }\left|\f{\dd^{k'}A}{\dd z^{k'}}\right|_{L^{\infty}}\cdot\left\|\f{\dd^{k-k'}n}{\dd z^{k-k'}}\right\|_{L^2_{\alpha}}\cdot\left\|\f{\dd^{k+1}n}{\dd z^{k+1}}\right\|_{L^2_{\alpha}}\\
&\leq \eta\left\|\f{\dd^{k+1}n}{\dd z^{k+1}}\right\|_{L^2_{\alpha}}^2+C_{\eta,k}\|n\|^2_{H^{k}_{\alpha}}.
\end{aligned}
$$
Here the positive constant $\eta>0$ can be chosen to be arbitrarily small. By Cauchy-Schwarz, the second inner product is bounded as
$$|J_2|\leq\eta\left\|\f{\dd^{k+1}n}{\dd z^{k+1}}\right\|_{L^2_{\alpha}}^2+C_{\eta}\left\|\f{\dd^{k}h_1}{\dd z^k}\right\|_{L^2_{\alpha}}^2.
$$
To further estimate $J_3$, in the similar way as before, we differentiate the second equation of \eqref{3.2.1} $k+1$ times and represent $\f{\dd^{k+1} n}{\dd z^{k+1}}$ as
\begin{align}\label{nk4}\f{\dd^{k+1}n}{\dd z^{k+1}}=\f{\dd^{k+1}\phi}{\dd z^{k+1}}-\vep\delta\f{\dd^{k+3}\phi}{\dd z^{k+3}}-\f{\dd^{k+1}h_2}{\dd z^{k+1}}.
\end{align}
Substituting this into $J_3$, we have
\begin{align}\label{phik}
J_3=&\f{\delta}{\sqrt{T+1}}\left(\f{\dd^{k+2}\phi}{\dd z^{k+2}}\text{ },\text{ }w_\alpha^2\f{\dd^{k+1}\phi}{\dd z^{k+1}}\right)+\f{\vep\delta^2}{\sqrt{T+1}}\left(-\f{\dd^{k+2}\phi}{\dd z^{k+2}}\text{ },\text{ }w_\alpha^2\f{\dd^{k+3}\phi}{\dd z^{k+3}}\right)\nonumber\\
&+\f{\delta}{\sqrt{T+1}}\left(-\f{\dd^{k+2}\phi}{\dd z^{k+2}}\text{ },\text{ }w_\alpha^2\f{\dd^{k+1}h_2}{\dd z^{k+1}}\right).
\end{align}
Then by integration by parts, it follows from \eqref{phik} that
\begin{align*}
|J_3|&\leq  C(\delta+\eta)\left\|\f{\dd^{k+1}\phi}{\dd z^{k+1}}\right\|_{L^2_{\alpha}}^2+C\vep\delta^2\left\|\f{\dd^{k+2}\phi}{\dd z^{k+2}}\right\|_{L^2_{\alpha}}^2\\
&\quad+C_{\eta}\delta^2\left(
\left\|\f{\dd^{k+1}h_2}{\dd z^{k+1}}\right\|_{L^2_{\alpha}}^2+\left\|\f{\dd^{k+2}h_2}{\dd z^{k+2}}\right\|_{L^2_{\alpha}}^2\right).
\end{align*}
Substituting estimates of $J_1$ to $J_3$ into \eqref{nk1}, we have, for any small $\eta>0$, that
\begin{align}\label{nk2}
\left\|\f{\dd^{k+1}n}{\dd z^{k+1}}\right\|_{L^2_{\alpha}}\leq& C\|n\|_{H^k_{\alpha}}+C(\sqrt{\delta}+\eta)\left\|\f{\dd^{k+1}\phi}{\dd z^{k+1}}\right\|_{L^2_{\alpha}}+C\vep^{1/2}\delta\left\|\f{\dd^{k+2}\phi}{\dd z^{k+2}}\right\|_{L^2_{\alpha}}\nonumber\\
&+C_{\eta}\left\|\f{\dd^{k}{h_1}}{\dd z^{k}}\right\|_{L^2_{\alpha}}+C_{\eta}\delta\left(\left\|\f{\dd^{k+1}h_2}{\dd z^{k+1}}\right\|_{L^2_{\alpha}}+\left\|\f{\dd^{k+2}h_2}{\dd z^{k+2}}\right\|_{L^2_{\alpha}}\right).
\end{align}
By using the induction assumption, \eqref{nk2} implies that 
\begin{align}
\label{nk3}
\left\|\f{\dd^{k+1}n}{\dd z^{k+1}}\right\|_{L^2_{\alpha}}&\leq C(\sqrt{\delta}+\eta)\left\|\f{\dd^{k+1}\phi}{\dd z^{k+1}}\right\|_{L^2_{\alpha}}+C\vep^{1/2}{\delta}\left\|\f{\dd^{k+2}\phi}{\dd z^{k+2}}\right\|_{L^2_{\alpha}}\nonumber\\
&\quad +C_{\eta}\left\{\|h_1\|_{H^{{k}}_{\alpha}}+\|h_2\|_{H^{{k+1}}_{\alpha}}+\delta\left\|\f{\dd^{{k+2}}h_2}{\dd z^{k+1}}\right\|_{L^2_{\alpha}}\right\}.
\end{align}
From \eqref{nk4}, we obtain that
\begin{multline}\label{nk5}
\left\|\f{\dd^{k+1}\phi}{\dd z^{k+1}}\right\|_{L^2_{\alpha}}+\sqrt{\vep\delta}\left\|\f{\dd^{k+2}\phi}{\dd z^{k+2}}\right\|_{L^2_{\alpha}}+\vep\delta\left\|\f{\dd^{k+3}\phi}{\dd z^{k+3}}\right\|_{L^2_{\alpha}}\\
\leq C\left\|\f{\dd^{k+1}n}{\dd z^{k+1}}\right\|_{L^2_{\alpha}}+C\left\|\f{\dd^{k+1}h_2}{\dd z^{k+1}}\right\|_{L^2_{\alpha}}.
\end{multline}
Therefore, \eqref{3.2.2} for $k+1$ follows from a suitable combination of \eqref{nk3} and \eqref{nk5} and taking both $\eta$ and $\delta$ suitably small. This completes the proof of estimate \eqref{3.2.2}.

\medskip
\noindent\underline{Step 3}. In this step, we construct the solution to \eqref{3.2.1} by using the approximation sequence $[n^{\vep'},\phi^{\vep'}]$ in terms of  solutions to the following ODE system:
\begin{equation}\label{3.2.18}
\left\{
\begin{aligned}
&\f{\dd n^{\varepsilon'}}{\dd z}=A(z)n^{\varepsilon'}+\f{\varepsilon'\delta}{\sqrt{T+1}}\f{\dd^2 \phi^{\varepsilon'}}{\dd z^2}+h_1,\\
&-\varepsilon\delta\f{\dd^2\phi^{\varepsilon'}}{\dd z^2}=n^{\varepsilon'}-\phi^{\varepsilon'}+h_2,\\
&n^{\vep'}(0)=0,
\end{aligned}
\right.
\end{equation}
where $0\leq \varepsilon'\leq 1$. Note that when $\vep'=1$, the system \eqref{3.2.18} is exactly \eqref{3.2.1} under consideration. In what follows let $L^{-1}_{\varepsilon',\varepsilon,\delta}$ formally denote the solution operator for the problem \eqref{3.2.18}.

\medskip
\noindent(i) Firstly we start with the case of $\vep'=0$. From the first equation of $\eqref{3.2.18}$, one can solve $n^0(z)$ as
\begin{equation}\nonumber
n^0(z)=\int_0^ze^{\int_{y}^zA(\tau)\dd \tau}h_1(y)\dd y.
\end{equation}
Since it holds that
$$
\lim_{z\rightarrow+\infty}A(z)=-2<0,\quad \lim_{z\rightarrow-\infty}A(z)=2>0,
$$
one has
$$
\|n^0\|_{H^{k}_{\alpha}}\leq C\|h_1\|_{H^{k-1}_{\alpha}}<\infty.
$$
The existence of the solution $\phi^0$ to the second equation of $\eqref{3.2.18}$ in case of $\varepsilon'=0$ can be shown by the Lax-Milgram Theorem and the ${H^{k+2}_{\alpha}}$-regularity can be shown by using the ${H^{k}_{\alpha}}$-estimate of $n^0$ and $h_2$. Here, the details of the proof are omitted for brevity. Therefore, the solution $U^0(z)=[n^0(z),\phi^{0}(z)]$ is well defined in the function space ${\mathbf{X}_{\alpha,k}}$. One can thereby use the similar argument in {previous steps} to deduce that the solution $U^0(z)=[n^0(z),\phi^{0}(z)]$ also satisfies the estimate \eqref{3.2.2}. Hence the solution operator $L^{-1}_{0,\varepsilon,\delta}$ in ${\mathbf{X}_{\alpha,k}}$ has been constructed.

\medskip
\noindent(ii) Next, we construct the solution of \eqref{3.2.18} when $\varepsilon'>0$ is small enough. For any 
{$U=[n,\phi]\in \mathbf{X}_{\alpha,k}$, }we introduce the linear mapping
$$
T_{\varepsilon'}U:=L^{-1}_{0,\varepsilon,\delta}[\frac{\varepsilon'\delta}{\sqrt{T+1}}\phi''+h_1,h_2].
$$
Then for any $U^1=(\tilde{n}_1,\tilde{\phi}_1)$ and $U^2=(\tilde{n}_2,\tilde{\phi}_2)$ in ${\mathbf{X}_{\alpha,k}}$,
one has from \eqref{3.2.2} that
\begin{align}\label{3.2.20}
\|T_{\varepsilon'}(U^1-U^2)\|_{\mathbf{X}_{\alpha,k}}&\leq C\varepsilon'\delta\|[\tilde{\phi}_1-\tilde{\phi}_2]\|_{H^{k+1}_{\alpha}}\leq {C\vep^{-1/2}}\varepsilon'\|U^1-U^2\|_{\mathbf{X}_{\alpha,k}},
\end{align}
where the constant 
{$C>0$} is independent of {$\delta$, $\vep$, and }$\varepsilon'$. We now choose {$\varepsilon'_0=\vep^{1/2}/2C$.} 
Then it follows from \eqref{3.2.20} that $T_{\varepsilon'}$ is a contraction mapping on 
${\mathbf{X}_{\alpha,k}}$
for any $0<\varepsilon'\leq\varepsilon'_0$. Thus $T_{\varepsilon'}$ has a unique fixed point $U^{\vep'}:=[n^{\vep'},\phi^{\vep'}]\in{\mathbf{X}_{\alpha,k}}$. It is direct to check that $U^{\varepsilon'}=[n^{\varepsilon'},\phi^{\varepsilon'}]$ is the solution to \eqref{3.2.18}.  
Therefore, $L^{-1}_{\vep',\vep,\delta}$ is well-defined for any $0<\vep'\leq \vep_0'$. {Moreover, the solution also satisfies the estimates in \eqref{3.2.2}.}

\medskip
\noindent(iii) Lastly, we introduce
    $$T_{\varepsilon'_0+\varepsilon'}U=T_{\varepsilon'_0+\varepsilon'}[n,\phi]=L^{-1}_{\varepsilon'_0,\varepsilon,\delta}[\varepsilon'\delta \phi''+h_1,h_2].$$
Notice that by the uniform estimate \eqref{3.2.2}, the upper bounds on the norm of the solution in terms of $L^{-1}_{\vep',\vep,\delta}$ is independent of $\vep$ and $\delta$. Then by using the same argument as in (ii), one can show that $T_{\varepsilon'_0+\varepsilon'}$ is a contraction mapping on ${\mathbf{X}_{\alpha,k}}$ and thereby has a unique fixed point in
${\mathbf{X}_{\alpha,k}}$ for $0<\varepsilon'\leq \varepsilon'_0$. Therefore, the solution operator $L^{-1}_{2\varepsilon'_0,\varepsilon,\delta}$ has been constructed. Now we can repeat the same procedure and finally construct the solution operator $L^{-1}_{\varepsilon,\delta}\equiv L^{-1}_{1,\varepsilon,\delta}$ for the original problem \eqref{3.2.1} in ${\mathbf{X}_{\alpha,k}}$.
The proof of Proposition \ref{lm3.2.1} is then completed.
\end{proof}

\subsection{Justification of the approximation}
We have the following estimates on the remaining terms $r_1$ to $r_6$ given in \eqref{def.r1-6}.

\begin{lemma}\label{lm3.1.2}
Let 
$0<\alpha<2$. Then there exist positive constants $\vep_2$ and $\delta_2$ 
such that if $0<\vep\leq \vep_2$ and $0<\delta\leq \delta_2$, then the following estimates hold:
\begin{align}
\left|\f{\dd^kr_1}{\dd z^{k}}\right|\leq C_ke^{-\alpha|z|},\quad\left|\f{\dd^kr_4}{\dd z^{k}}\right|\leq C_ke^{-\alpha|z|},\nonumber
\end{align}
for any integer $k\geq 0$, where each $C_{k}>0$ is a generic constant independent of $\vep$ and $\delta$.
Moreover, 
we have
$$
\begin{aligned}
&\|r_1\|_{H^{k-1}_{\alpha}}\leq {C_k},\\
&\|r_2[n,\phi]\|_{H^{k-1}_{\alpha}}\leq {C_k}\sqrt{\vep}\|[n,\phi]\|_{\mathbf{X}_{\alpha,k}},\\
&\|r_3\|_{H^{k-1}_{\alpha}}\leq {C_k}\vep\|[n,\phi]\|_{\mathbf{X}_{\alpha,k}}^2\{1+\|[n,\phi]\|^2_{\mathbf{X}_{\alpha,k}}\},\\
&\|r_4\|_{H^{k}_{\alpha}}\leq {C_k},\\
&\|r_5[\phi]\|_{H^{k}_{\alpha}}\leq {C_k}\vep\|\phi\|_{H^{k}_{\alpha}},\\
&\|r_6[\phi]\|_{H^{k}_{\alpha}}\leq {C_k}\vep^2e^{\vep^2\|\phi\|_{L^\infty}}\|\phi\|^2_{H^{k}_{\alpha}},
\end{aligned}
$$
for any integer $k\geq 2$, where each $C_{k}>0$ is a generic constant independent of $\vep$ and $\delta$.
\end{lemma}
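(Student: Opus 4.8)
The plan is to split the six source terms into two groups: $r_1$ and $r_4$, which involve only the (known) modified KdV--Burgers profiles $n_{1,\vep},u_{1,\vep},\phi_{1,\vep}$ and for which one proves pointwise exponential bounds, and $r_2,r_3,r_5,r_6$, which are polynomial functionals of the remainder $[n,\phi]$ with small prefactors, handled by weighted product and composition inequalities. As preliminaries I would record from Lemma~\ref{lem3.1.0} and Lemma~\ref{lm3.1.1}: each of $n_1,u_1,\phi_1$ and $n_{1,\vep},u_{1,\vep},\phi_{1,\vep}$ is monotone and approaches its far field exponentially with a rate $\theta_0>\alpha$ that is uniform for $\vep,\delta$ small (this is where $\alpha<2$ enters, the rate being close to $2$, and it is also why both $\vep$ and $\delta$ must be small); one has the exact identity $\phi_1=n_1$ from \eqref{3.1.6}; and $n_{1,\vep}=n_1+\vep n_2$, $u_{1,\vep}=u_1+\vep u_2$, $\phi_{1,\vep}=\phi_1+\vep\phi_2$, where $[n_2,u_2,\phi_2]$ is, uniformly in small $\vep,\delta$, bounded together with all its derivatives and approaches its far fields exponentially with rate $\theta_0$.

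Next I would estimate $r_1$ and $r_4$. For $r_1$, substituting $n_{1,\vep}=n_1+\vep n_2$, $\phi_{1,\vep}=\phi_1+\vep\phi_2$ and using $\phi_1=n_1$ turns the seemingly singular factor $(1+\vep n_{1,\vep})\phi_{1,\vep}''-n_{1,\vep}''$ into $\vep\big[(\phi_2-n_2)''+n_{1,\vep}\phi_{1,\vep}''\big]$, which cancels the $\vep^{-1}$ in the prefactor; the remaining three summands of $r_1$ are then products of bounded factors ($O(1)$ or $O(\delta)$), at least one of which --- $n_{1,\vep}'$, $(\phi_2-n_2)''$, $\phi_{1,\vep}''$, or $(\phi_{1,\vep}')^2$ --- decays exponentially with rate $\ge\theta_0>\alpha$, and the same holds after differentiation. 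For $r_4=\vep^{-2}\big(1+\vep n_{1,\vep}-e^{\vep\phi_{1,\vep}}\big)$ the crucial observation is that the bracket $1+\vep n_{1,\vep}-e^{\vep\phi_{1,\vep}}$ vanishes identically at $z=\pm\infty$: this follows from the quasi-neutral far-field relation $n_{\vep,\pm}=e^{\phi_{\vep,\pm}}$ combined with $1+\vep n_{1,\vep}(\pm\infty)=n_{\vep,\pm}$ and $\vep\phi_{1,\vep}(\pm\infty)=\phi_{\vep,\pm}$. Subtracting the (zero) far-field value, expanding in $\vep$, and using again $\phi_1=n_1$ (so that $n_{1,\vep}-\phi_{1,\vep}=\vep(n_2-\phi_2)$) shows $1+\vep n_{1,\vep}-e^{\vep\phi_{1,\vep}}$ equals $\vep^2$ times exponentially decaying functions, whence $r_4$ and its derivatives are bounded and decay at rate $\ge\theta_0>\alpha$. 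Multiplying by $w_\alpha$ and integrating then yields $|\partial_z^k r_1|,|\partial_z^k r_4|\le C_ke^{-\alpha|z|}$ together with $\|r_1\|_{H^{k-1}_\alpha}\le C_k$ and $\|r_4\|_{H^k_\alpha}\le C_k$.

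For $r_2,r_3,r_5,r_6$ I would use that $H^{k-1}_\alpha$ is a Banach algebra with $H^{k-1}_\alpha\hookrightarrow L^\infty$ for $k\ge2$, that multiplication by a function bounded in $W^{k,\infty}$ is bounded on $H^k_\alpha$, and the weighted Moser composition estimate; in each case the coefficients are explicit powers of $\vep,\delta$ times functions of the profiles bounded in $W^{m,\infty}$. For $r_2$, from the explicit form \eqref{5.1} each of the building blocks $n$, $n'$, $\phi'$, $\delta\phi''$ carries one factor of $\vep$; the only term requiring care is $\vep\delta\,\partial_z^{k+1}\phi$, produced by differentiating $\vep\delta\phi''$ a total of $k-1$ times, and since $\sqrt{\vep\delta}\,\|\partial_z^{k+1}\phi\|_{L^2_\alpha}\le\|[n,\phi]\|_{\mathbf{X}_{\alpha,k}}$ we get $\vep\delta\,\|\partial_z^{k+1}\phi\|_{L^2_\alpha}\le\sqrt{\vep\delta}\,\|[n,\phi]\|_{\mathbf{X}_{\alpha,k}}\le\sqrt\vep\,\|[n,\phi]\|_{\mathbf{X}_{\alpha,k}}$ (using $\delta\le1$); hence $\|r_2\|_{H^{k-1}_\alpha}\le C_k\sqrt\vep\,\|[n,\phi]\|_{\mathbf{X}_{\alpha,k}}$. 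The same bookkeeping applied to the quadratic and cubic expression \eqref{5.2}, together with the algebra property and $H^{k-1}_\alpha\hookrightarrow L^\infty$, gives the bound on $r_3$. For $r_5=(1-e^{\vep\phi_{1,\vep}})\phi$ one uses $\|1-e^{\vep\phi_{1,\vep}}\|_{W^{k,\infty}}\le C_k\vep$; for $r_6$ one writes $1-e^{\vep^2\phi}+\vep^2\phi=-\tfrac{1}{2}\vep^4\phi^2\,g(\vep^2\phi)$ with $g$ entire and $g(0)=1$, so that $r_6=-\tfrac{1}{2}\vep^2e^{\vep\phi_{1,\vep}}\phi^2\,g(\vep^2\phi)$, and the composition estimate yields $\|r_6\|_{H^k_\alpha}\le C_k\vep^2e^{\vep^2\|\phi\|_{L^\infty}}\|\phi\|_{H^k_\alpha}^2$.

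The main obstacle is the second step: showing that $r_1$ and $r_4$ are actually $O(1)$ rather than $O(\vep^{-1})$ or $O(\vep^{-2})$, and --- more delicately --- that they still decay exponentially even though the individual Taylor contributions do not; this forces the simultaneous use of the exact identity $\phi_1=n_1$, of the quasi-neutral far-field conditions (which make the bracket defining $r_4$ vanish at $\pm\infty$), and of the uniform-in-$(\vep,\delta)$ exponential convergence of the modified profiles and of $[n_2,u_2,\phi_2]$ to their far fields at a rate strictly exceeding $\alpha$. Once these cancellations are in place, the estimates for $r_2,\dots,r_6$ are routine applications of weighted product and composition inequalities, the only point of attention being the careful matching of the $(\vep,\delta)$-powers against the three pieces of the $\mathbf{X}_{\alpha,k}$-norm.
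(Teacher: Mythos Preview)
Your proposal is correct and follows essentially the same route as the paper: for $r_1$ and $r_4$ you identify the key cancellations --- the identity $\phi_1=n_1$ from \eqref{3.1.6} to remove the $\vep^{-1}$ in $r_1$, and the quasi-neutral far-field relation $1+\vep n_{1,\vep}(\pm\infty)=e^{\vep\phi_{1,\vep}(\pm\infty)}$ to make the bracket in $r_4$ vanish at infinity --- and then appeal to Lemmas~\ref{lem3.1.0} and~\ref{lm3.1.1} for the uniform exponential decay, while for $r_2,\dots,r_6$ you use the explicit formulas together with weighted product/composition inequalities. The paper does exactly this (writing the $r_1$ cancellation as $\vep^{-1}[\phi_{1,\vep}''-\phi_1''-(n_{1,\vep}''-n_1'')]$ and the $r_4$ cancellation via the error bounds of Lemma~\ref{lm3.1.1}), and otherwise omits the routine Sobolev-type estimates you spell out.
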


\begin{proof}
We first consider $r_4$. Note from \eqref{ad-ffdr} that
$$
1+\vep n_{1,\vep}(\pm\infty)-e^{\vep\phi_{1,\vep}(\pm\infty)}=0.
$$
Hence, by \eqref{def.r1-6}, one can rewrite $r_4$ as
$$
r_4=\vep^{-2}\left\{\vep\left[n_{1,\vep}-n_{1,\vep}(\pm\infty)\right]-\left[e^{\vep\phi_{1,\vep}}-e^{\vep\phi_{1,\vep}(\pm\infty)}\right]\right\}.
$$
Using \eqref{3.1.6}, it further reduces to
\begin{align*}
r_4
=&\vep^{-1}\big\{[n_{1,\vep}-n_{1,\vep}(\pm\infty)]-[n_{1}-n_{1,\pm}]\big\}\\
&-\vep^{-1}\big\{[\phi_{1,\vep}-\phi_{1,\vep}(\pm\infty)]-[\phi_1-\phi_{1,\pm}]\big\}\nonumber\\
&
+O(1)|\phi_{1,\vep}-\phi_{1,\vep}(\pm\infty)|.\nonumber
\end{align*}
Then we use Lemma \ref{lem3.1.0} and Lemma \ref{lm3.1.1} to conclude the desired estimates on $r_4$. Similarly, it holds that
$$\begin{aligned}
r_1=&\frac{\delta(1+\vep n_{1,\vep})}{\sqrt{T+1}}\vep^{-1}[\phi_{1,\vep}''-n_{1,\vep}'']+(\frac{1}{\sqrt{T+1}}+n_{1,\vep})n_{1,\vep}'\\
&+\f{\delta(1+\vep n_{1,\vep})n_{1,\vep}\phi_{1,\vep}''}{\sqrt{T+1}}-\f{\delta(1+\vep n_{1,\vep})^2}{2\sqrt{T+1}}(\phi_{1,\vep}')^2\\
=&\frac{\delta(1+\vep n_{1,\vep})}{\sqrt{T+1}}\vep^{-1}[\phi_{1,\vep}''-\phi_{1}''-(n_{1,\vep}''-n_1'')+\underbrace{\phi_1''-n_1''}_{=0 \text { by \eqref{3.1.6}}}]+(\frac{1}{\sqrt{T+1}}+n_{1,\vep})n_{1,\vep}'\\
&+\f{\delta(1+\vep n_{1,\vep})n_{1,\vep}\phi_{1,\vep}''}{\sqrt{T+1}}-\f{\delta(1+\vep n_{1,\vep})^2}{2\sqrt{T+1}}(\phi_{1,\vep}')^2.
\end{aligned}
$$
Notice that each term on the right contains derivatives, so that all the right-hand  terms and hence $r_1$ vanish at $z\to\pm\infty$. Then the estimates on $r_1$ directly follow from Lemma \ref{lem3.1.0} and Lemma \ref{lm3.1.1}. Estimates on other terms can be treated with the help of the  Sobolev inequality; we omit the details of the proof for brevity. The proof of Lemma \ref{lm3.1.2} is then complete.
\end{proof}

Now we are in position to prove Theorem \ref{thm1.2}. We start from the  approximation sequence 
{$U_{i}=[n_i,\phi_i]$ 
($i=0,1,\cdots$)} in terms of
$$
\left\{
\begin{aligned}
&n_{i+1}'=A(z)n_{i+1}+\f{\delta}{\sqrt{T+1}}\phi_{i+1}''+r_1+r_2[n_i,\phi_i]+r_3[n_i,\phi_i],\\
&-\vep\delta\phi_{i+1}''=n_{i+1}-\phi_{i+1}+r_4+r_5[\phi_i]+r_6[\phi_i],\\
&n_{i+1}(0)=0, U_0=[0,0].
\end{aligned}
\right.
$$
Note that the existence of the sequence 
{$\{U_i\}_{i\geq 0}$}
is assured by Proposition \ref{lm3.2.1}. By induction, we claim to have the uniform bound of 
{$U_i$} as
\begin{align}\label{ad.Ubd}
\|U_i\|_{\mathbf{X}_{\alpha,k}}\leq K,\quad i=0,1,\cdots
\end{align}
for a suitably chosen constant $K>0$ independent of $\vep$, $\delta$ and 
{$i$}. Indeed, \eqref{ad.Ubd} is obviously true for $i=0$, since $U_0=[0,0]$. To proceed, we assume that  \eqref{ad.Ubd} is true up to 
{$i\geq 0$}.
Applying Proposition \ref{lm3.2.1} to 
{$U=U_{i+1}$}
with
$$
h_1=r_1+r_2[n_i,\phi_i]+r_3[n_i,\phi_i],\quad h_2=r_4+r_5[\phi_i]+r_6[\phi_i],
$$
and further using Lemma \ref{lm3.1.2} to estimate the right-hand side of \eqref{3.2.2} as
\begin{align}
\|h_1\|_{H^{k-1}_{\alpha}}&\leq \|r_1\|_{H^{k-1}_\alpha}+\|r_2\|_{H^{k-1}_{\alpha}}+\|r_3\|_{H^{k-1}_\alpha}\nonumber\\
&\leq C_k+C_k\sqrt{\vep}\|U_i\|_{\mathbf{X}_{\alpha,k}}\left(1+\|U_i\|^3_{\mathbf{X}_{\alpha,k}}\right),\nonumber\\
\|r_5[\phi_i]\|_{H^{k}_\alpha}+\delta\left\|\f{\dd^{k+1}r_5}{\dd z^{k+1}}\right\|_{L^2_\alpha}&\leq C_k\vep\|\phi_i\|_{H^{k}_\alpha}+C_k\vep\delta\left\|\f{\dd^{k+1}\phi_i}{\dd z^{k+1}}\right\|_{L^2_\alpha}\leq C_k\sqrt{\vep}\|U_i\|_{\mathbf{X}_{\alpha,k}},\nonumber\\
\|r_6[\phi_i]\|_{H^{k}_\alpha}+\delta\left\|\f{\dd^{k+1}r_6}{\dd z^{k+1}}\right\|_{L^2_\alpha}&\leq C_k\vep^2e^{\vep^2\|\phi_i\|_{H^{k}_{\alpha}}}\left(\|\phi_i\|^2_{H^{k}_\alpha}+\delta\left\|\f{\dd^{k+1}\phi_i}{\dd z^{k+1}}\right\|^2_{L^2_\alpha}\right)\nonumber\\
&\leq C_k\vep e^{\vep^2\|\phi_i\|_{H^{k}_{\alpha}}}\|U_i\|_{\mathbf{X}_{\alpha,k}}^2,\nonumber
\end{align}
one can conclude that 
$\|U_{i+1}\|_{\mathbf{X}_{\alpha,k}}$
is bounded by
%
\begin{equation}\label{3.3.1}
B
+C\sqrt{\vep}\|U_i\|_{\mathbf{X}_{\alpha,k}}\left\{1+\|U_{i}\|_{\mathbf{X}_{\alpha,k}}
\left(1+e^{\vep^2\|U_i\|_{\mathbf{X}_{\alpha,k}}}\right)+\|U_i\|_{\mathbf{X}_{\alpha,k}}^3\right\},
\end{equation}
for a generic constant $B>0$ independent of $\vep$, 
$\delta$ and $i$. In terms of the induction hypothesis, it follows from \eqref{3.3.1} that
$$
\|U_{i+1}\|_{\mathbf{X}_{\alpha,k}}\leq K
$$
by taking $K=2B$ and $\vep>0$ small enough. This then proves \eqref{ad.Ubd}.

By a similar argument, one
can further show that  the estimate
$$
\|U_{i+1}-U_{i}\|_{\mathbf{X}_{\alpha,k}}\leq \frac{1}{2}\|U_{i}-U_{i-1}\|_{\mathbf{X}_{\alpha,k}}
$$
holds true for all $i\geq 1$, provided that $\vep>0$ is small enough. Thus, $\{U_i\}_{i\geq 0}$ is a Cauchy sequence in $\mathbf{X}_{\alpha,k}$, and hence there is $U\in \mathbf{X}_{\alpha,k}$ such that $U_i\rightarrow U$  as $i\to\infty$ in terms of the norm of $\mathbf{X}_{\alpha,k}$. It is straightforward to check that the limit function $U:=[n_R,\phi_R]$  solves the problem \eqref{3.1.13} and satisfies \eqref{thm1.2.estnp} by choosing $C_k=K$.

Once $n_R$ is solved, $u_R$ can be solved according to \eqref{3.1.u} and it follows that
$$
\|u_R\|_{H^{k}_{\alpha}}\leq C\|[n_{R},\phi_R\|_{\mathbf{X}_{\alpha,k}}+C. 
$$
This then proves \eqref{thm1.2.estu}  due to \eqref{thm1.2.estnp} by re-choosing $C_k$ suitably large.
Therefore, \eqref{1.1.3} is justified with the uniform estimates \eqref{thm1.2.estnp} and \eqref{thm1.2.estu} for the remaining terms. The proof of Theorem \ref{thm1.2} is complete. \qed

\section{Dynamical stability of shock profiles}\label{sec4}

In this section we turn to the proof of Theorem \ref{thm1.3} for the large time asymptotic stability of the smooth small-amplitude shock profile obtained in Theorem \ref{thm1.1} under suitably small smooth perturbations. The proof is based on the anti-derivative technique and the elementary energy method. Compared to the classical result for the Navier-Stokes equations, the main difficulty is to treat the extra effect of the self-consistent force.

\subsection{Reformulation}

Recall the coordinate $(t,y)=(t,x-st)$. We define the perturbation around the shock profile $[\b{v},\b{u},\b{\phi}](y)$ as
\begin{align}
[\tilde{v},\tilde{u},\tilde{\phi}]:=[v-\bar{v}, u-\bar{u}, \phi-\bar{\phi} ]. \nonumber
\end{align}
Then by \eqref{4.2}, $[\tilde{v},\tilde{u},\tilde{\phi}](t,y)$ satisfies
\begin{equation}\label{4.1.1}
\left\{
\begin{aligned}
&\tilde{v}_{t}-s\tilde{v}_{y}-\tilde{u}_{y}=0,\\
&\tilde{u}_{t}-s\tilde{u}_{y}+T\left(\frac{1}{v}-\frac{1}{\bar{v}}\right)_{y}
=\mu\left(\frac{{u}_{y}}{v}-
\frac{\bar{u}_{y}}{\bar{v}}\right)_{y}-\left(\frac{\phi_{y}}{v}-\frac{\bar{\phi}_{y}}{\bar{v}}\right),\\
&-\lambda^{2}\left(\frac{\phi_y}{v}-\frac{\bar{\phi}_y}{\bar{v}}\right)_{y}=\bar{v}e^{\bar{\phi}}-ve^{\phi}.
\end{aligned}
\right.
\end{equation}
As for obtaining \eqref{4.3c}, the second equation of \eqref{4.1.1} can be rewritten as
\begin{multline}\label{4.1.2}
\tilde{u}_{t}-s\tilde{u}_{y}+(T+1)\left(\frac{1}{v}-\frac{1}{\bar{v}}\right)_{y}
-\mu\left(\frac{{u}_{y}}{v}-
\frac{\bar{u}_{y}}{\bar{v}}\right)_{y}\\
=\f{\lambda^{2}}{2}\left[\left(\frac{\phi_{y}}{v}\right)^{2}-
	\left(\frac{\bar{\phi}_{y}}{\bar{v}}\right)^{2}\right]_{y}-
\lambda^{2}\left[\frac{1}{v}\left(\frac{\phi_{y}}{v}\right)_{y}-\frac{1}{\bar{v}}
	\left(\frac{\bar{\phi}_{y}}{\bar{v}}\right)_{y}\right]_{y}.
\end{multline}
Recall \eqref{1.1.4} for a formal definition of $[\Phi,\Psi]$. Then, from \eqref{4.1.1} as well as \eqref{4.1.2}, by formally taking integration of $[\tilde{v},\tilde{u}](t,\cdot)$  from $-\infty$ to $y$,
$[\Phi,\Psi,{\phi}](t,y)$ satisfies
\begin{equation}\label{4.1.3}
\left\{
\begin{aligned}
&\Phi_{t}-s\Phi_{y}-\Psi_{y}=0,\\
&\Psi_{t}-s\Psi_{y}+(T+1)\left(\frac{1}{\bar{v}+\Phi_y}-\frac{1}{\bar{v}}\right)
-\mu\left(\frac{\Psi_{yy}+\bar{u}_{y}}{\bar{v}+\Phi_y}-
\f{\bar{u}_y}{{\bar{v}}}\right)\\
&\qquad\qquad\qquad=\f{\lambda^{2}}{2}\left[\left(\frac{\phi_{y}}{v}\right)^{2}-
	\left(\frac{\bar{\phi}_{y}}{\bar{v}}\right)^{2}\right]-
\lambda^{2}\left[\frac{1}{v}\left(\frac{\phi_{y}}{v}\right)_{y}-\frac{1}{\bar{v}}
	\left(\frac{\bar{\phi}_{y}}{\bar{v}}\right)_{y}\right],\\
&-\lambda^{2}\left(\frac{\phi_y}{v}-\frac{\bar{\phi}_y}{\bar{v}}\right)_{y}=\b{v}e^{\b{\phi}}-{v}e^{\phi},
\end{aligned}
\right.
\end{equation}
supplemented with the initial data of $[\Phi,\Psi]$ given in \eqref{def.id.PP}. We regard the Cauchy problem \eqref{4.1.3} and \eqref{def.id.PP} on $[\Phi,\Psi,\phi](t,y)$ as an auxiliary problem for obtaining the  existence of the original solution $[v,u,\phi](t,y)$ by defining
\begin{equation}
\label{def.relvu}
[v,u]=[\bar{v},\bar{u}]+[\Phi_y,\Psi_y],
\end{equation}
and the uniqueness of solutions $[v,u,\phi]$ in the prescribed function space can be independently proved. Thus there is actually no need to justify if the right-hand terms of  \eqref{1.1.4} are well defined. Since it is a standard procedure, in what follows we will only focus on the existence of smooth solutions to  the Cauchy problem \eqref{4.1.3} and \eqref{def.id.PP} by the energy method.


\subsection{A priori estimates}
We are now devoted to obtaining the a priori estimates of solutions to the Cauchy problem \eqref{4.1.3} and \eqref{def.id.PP}.

\begin{proposition}\label{prop4.1}
Let $M>0$ be an arbitrary constant and $[\Phi,\Psi,\tilde{\phi}]$ be a smooth solution to the Cauchy problem \eqref{4.1.3} on $[0,M]$ with initial data $[\Phi_0,\Psi_0]\in H^2$. Then there exist positive constants {$e_1$ and $\tilde{\vep}_1$}
independent of $M$ such that if
\begin{equation}\label{4.2.1}
\sup_{0\leq t\leq M}\|[\Phi,\Psi,\tilde{\phi}](t)\|_{H^2}\leq {e_1}
\end{equation}
and
\begin{equation}\label{4.2.2}
|v_+-v_-|\leq {\tilde{\vep}_1} 
\end{equation}
then it holds that
\begin{multline}\label{4.2.3}
\|[\Phi,\Psi,\tilde{\phi}](t)\|_{H^2}^2+\int_0^t\|\sqrt{s\b{v}\bar{v}_y}\Psi(\tau)\|_{L^2}^2\\
+\|[\Phi_y,\tilde{\phi}_t](\tau)\|_{H^1}^2+\|[\Psi_y,\tilde{\phi}](\tau)\|_{H^2}^2\dd \tau
\leq C\|[\Phi_0,\Psi_0]\|_{H^2}^2,
\end{multline}
for all $t\in[0,M]$.
\end{proposition}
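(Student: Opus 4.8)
The plan is to prove \eqref{4.2.3} by the classical anti-derivative energy method (cf.~\cite{MN,G}), running the energy estimate at the $L^2$, $H^1$ and $H^2$ levels for $[\Phi,\Psi,\tilde\phi]$ and closing the Poisson equation separately at each level; recall that one $y$-derivative of $[\Phi,\Psi]$ is the physical perturbation $[\tilde v,\tilde u]=[v-\b{v},u-\b{u}]$. Two structural facts will be used throughout. First, the a priori bound \eqref{4.2.1} together with $H^1\hookrightarrow L^\infty$ keeps $v=\b{v}+\Phi_y$ in a fixed compact subinterval of $(0,\infty)$ and makes $\|[\Phi_y,\Psi_y,\tilde\phi]\|_{L^\infty}$ as small as we wish, so that every cubic-and-higher remainder can be bounded by a small multiple of the dissipation and absorbed. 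Second, by \eqref{4.2.2} and Proposition \ref{prop1.5} the profile obeys $|\b{v}_y|+|\b{u}_y|+|\b{\phi}_y|\leq C|v_+-v_-|^2e^{-\theta|v_+-v_-|\,|y|}$; hence every term in the energy identities carrying a profile derivative without a compensating derivative of the unknown is controlled by $|v_+-v_-|$ times lower-order quantities and is harmless, apart from the single genuinely good term $\int s\b{v}\b{v}_y\Psi^2\,\dd y$ generated by the convection.

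For the base ($L^2$) estimate I would test the $\Psi$-equation of \eqref{4.1.3} against $\Psi$ and add a $\b{v}$-weighted multiple of the $\Phi$-equation tested against $\Phi$. Integrating by parts and using the profile ODE, the convection and pressure terms give $\f{\dd}{\dd t}$ of a positive quadratic form in $[\Phi,\Psi]$ plus $c\int s\b{v}\b{v}_y\Psi^2\,\dd y$, while the viscous term $\mu(u_y/v-\b{u}_y/\b{v})_y$ gives $\mu\int\Psi_y^2/v\,\dd y$ after one integration by parts. The self-consistent force enters through the right-hand side of the $\Psi$-equation involving $\phi_y/v-\b{\phi}_y/\b{v}$; to handle it I would test the Poisson equation of \eqref{4.1.3} against $\tilde\phi$ and against $\tilde\phi_t$. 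Linearizing the Poisson equation yields an elliptic relation of the schematic form $-\lambda^2(\tilde\phi_y/\b{v})_y+(\b{v}e^{\b{\phi}})\tilde\phi=-e^{\b{\phi}}\Phi_y+(\text{h.o.t.})$, which simultaneously (i) bounds $\|\tilde\phi\|_{H^{m+1}}$ by $\|\Phi_y\|_{H^{m-1}}$ plus nonlinear terms, (ii) produces the $\tilde\phi$- and $\tilde\phi_t$-pieces of the dissipation, and (iii) can be substituted into the force terms of the momentum equation to expose cancellations. Choosing the relative weights, then $e_1$ and $|v_+-v_-|$ small, closes the $L^2$ level.

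The $H^1$ and $H^2$ estimates follow by differentiating \eqref{4.1.3} once and twice in $y$ and repeating the scheme, now testing the differentiated $\tilde u$-equation against $\tilde u$ and against $\tilde u_y$: the viscous term then delivers $\|\Psi_{yy}\|_{L^2}^2$ and $\|\Psi_{yyy}\|_{L^2}^2$, the pressure term (after substituting $\Psi_y=\Phi_t-s\Phi_y$ from the first equation) delivers $\|\Phi_y\|_{H^1}^2$, and the Poisson equation differentiated in $y$ and in $t$ delivers $\|\tilde\phi\|_{H^2}^2+\|\tilde\phi_t\|_{H^1}^2$. The main obstacle, exactly as flagged in the introduction, is the electric-force contribution at the first-derivative level: after differentiation the force term couples a high derivative of $\tilde\phi$ to $\tilde v$ through the Poisson equation, and since the Poisson equation degenerates while this coupling is of the same order as the $(T+1)$-pressure dissipation, it cannot be absorbed directly. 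The remedy is to use the Poisson equation itself to represent the offending derivative of $\tilde v=\Phi_y$ in terms of $\tilde\phi$, which produces a crucial cancellation; the residual is then absorbed into a modified energy functional $\CE_1(t)$ whose positive-definiteness requires $T>0$, which is precisely why $T>0$ is assumed and the case $T=0$ is left open. Summing the $L^2$, $H^1$ and $H^2$ estimates with suitably small coefficients and using once more the smallness of $e_1$ and $|v_+-v_-|$ to absorb all remaining profile-derivative and nonlinear terms yields \eqref{4.2.3}.
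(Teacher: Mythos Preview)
Your outline is essentially the paper's own proof: anti-derivative energy method at successive levels, the profile smallness \eqref{4.4} to kill profile-derivative terms, elliptic estimates from the Poisson equation for $\tilde\phi$, and the substitution of $\Phi_y$ via the Poisson equation to produce the cancellation that leads to the modified functional $\CE_1(t)$ whose positivity needs $T>0$.

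Two points where your description slips relative to the actual mechanism. First, the $T>0$ obstruction and the functional $\CE_1(t)$ already appear at the \emph{zeroth-order} step, not at the first-derivative level: in the paper the force term in the $\Psi$-equation produces, after one integration by parts and the substitution $\Psi_y=\Phi_t-s\Phi_y$, the cross term $-\lambda^2\tilde\phi_t\Phi_y$; replacing $\Phi_y$ by the Poisson equation yields the indefinite piece $-\lambda^2\tilde\phi_y\Phi$ inside $\CE_1(t)$ (see \eqref{E1} and \eqref{ad.qfT}), and positivity of the quadratic form $\tfrac{T+1}{2}\Phi^2+\tfrac{\lambda^4e^{-\b\phi}}{2\b v}\tilde\phi_y^2-\lambda^2\tilde\phi_y\Phi$ is exactly where $T>0$ is used. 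An analogous sign condition on $(\Phi_y,\tilde\phi_{yy})$ reappears in Lemma~\ref{lem4.3}, so your placement at the $H^1$ level is not wrong, but the primary occurrence you should flag is at $L^2$. Second, the dissipation $\|\Phi_y\|_{L^2}^2$ does not come from ``the pressure term after substituting $\Psi_y=\Phi_t-s\Phi_y$''; it is produced by the standard cross-multiplier trick (test the $\Psi$-equation against $-\mu^{-1}\b v\,\Phi_y$ and the differentiated $\Phi$-equation against $\Phi_y$), which is Lemma~\ref{lem4.3} in the paper. With these two corrections your plan matches the paper's proof.
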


We will devote the rest of this subsection to prove Proposition \ref{prop4.1}. Firstly we estimate the zero-order energy of $[\Phi,\Psi,\tilde{\phi}]$. For this, we rewrite \eqref{4.1.3} as follows:
\begin{equation}\label{4.2.4}
\left\{
\begin{aligned}
&\Phi_{t}-s\Phi_{y}-\Psi_{y}=0,\\
&\Psi_{t}-s\Psi_{y}-\frac{(T+1)}{\b{v}^{2}}\Phi_{y}-\frac{\mu}{\b{v}}\Psi_{yy}=-\lambda^{2}\bigg[\f{1}{v}
\left(\frac{{\phi}_{y}}{v}\right)_y-\f{1}{\bar{v}}\left(\f{\bar{\phi}_y}{\bar{v}}\right)_y\bigg]+\CJ_{1}+\CN_{1},\\
&-\lambda^{2}\left(\frac{\phi_y}{v}-\frac{\bar{\phi}_y}{\bar{v}}\right)_{y}=-e^{\b{\phi}}\Phi_y-\bar{v}e^{\bar{\phi}}\tilde{\phi}+\CN_2,
\end{aligned}
\right.
\end{equation}
where we have denoted
\begin{align}
\CJ_1&\equiv\lambda^2\bigg(\f{\bar{\phi}_y\tilde{\phi}_y}{v^2}-\f{\bar{\phi}_{y}^2\Phi_y}{v^2\bar{v}}\bigg)-\f{\mu \b{u}_y\Phi_y}{v\b{v}},\notag\\
\CN_1&\equiv\f{-(T+1)\Phi_y^2}{\bar{v}^2v}-\f{\mu\Psi_{yy}\Phi_y}{v\bar{v}}+\f{\lambda^2}{2}\bigg(\f{\tilde{\phi}_y^2}{v^2}-\f{\bar{\phi}^2_y
\Phi_y^2}{v^2\bar{v}^2}\bigg),\nonumber\\
\CN_2&\equiv e^{\bar{\phi}}(1-e^{\tilde{\phi}})\Phi_y+\bar{v}e^{\bar{\phi}}(1-e^{-\tilde{\phi}}+\tilde{\phi})\nonumber.
\end{align}

\begin{lemma}\label{lem4.2}
Under the assumptions of Proposition \ref{prop4.1}, it holds that
\begin{align}
&\|[\Phi,\Psi,\tilde{\phi},\tilde{\phi}_y](t)\|_{L^{2}}^2+\int_0^t\left\{\|\sqrt{s\b{v}\bar{v}_y}\Psi(\tau)\|_{L^2}^2+\|\Psi_y(\tau)\|_{L^2}^2
\right\}\dd \tau\notag \\
&\leq C \|[\Phi_0,\Psi_0]\|_{L^2}^2+C\|\tilde{\phi}(0)\|^2_{H^1}\nonumber\\
&\quad+ C 
{(e_1+\tilde{\vep}_1)}\int_{0}^t\left\{\|[\Phi_y,\Psi_y](\tau)\|_{H^1}^2+\|\tilde{\phi}(\tau)\|_{H^2}^2+\|\tilde{\phi}_t(\tau)\|_{H^1}^2\right\}
\dd \tau,\label{4.2.8}
\end{align}
for all $t\in [0,M]$.
\end{lemma}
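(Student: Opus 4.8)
The proof is by the anti-derivative energy method, with the new feature being the coupling to the self-consistent force. The plan is to take suitable $y$-weighted inner products of the three equations in \eqref{4.2.4}, choosing the weights so that the dangerous cross terms either cancel or, after integration by parts against the $y$-dependent profile coefficients, acquire a favorable sign. First I would derive the basic $L^2$ identity by multiplying the second equation of \eqref{4.2.4} by $\Psi$ and the first by a suitable multiple of $\Phi$, and integrating. The viscous term yields $\int\frac{\mu}{\bar v}\Psi_y^2\,dy$ up to a commutator error carrying the small factor $(\bar v^{-1})_y$, and $-s\int\Psi\Psi_y\,dy=0$. The crucial manipulation is on the linearized pressure term $-\int\frac{T+1}{\bar v^2}\Phi_y\Psi\,dy$: writing $\Phi_y=s^{-1}(\Phi_t-\Psi_y)$ from the first equation and integrating by parts against $\bar v^{-2}$, its $\Psi_y\Psi$ part produces $\frac{T+1}{s}\int\bar v^{-3}\bar v_y\Psi^2\,dy\ge 0$, which, by $\bar v_y>0$ and $s>0$ from Proposition \ref{prop1.5}, controls the dissipation $\|\sqrt{s\bar v\bar v_y}\,\Psi\|_{L^2}^2$. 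The leftover $\Phi_t\Psi$-term is absorbed into a time-derivative correction $\frac{d}{dt}\!\int\bar v^{-2}\Phi\Psi\,dy$ minus $\int\bar v^{-2}\Phi\Psi_t\,dy$; expanding $\Psi_t$ through the $\Psi$-equation generates $\|\Phi_y\|^2$ (kept as a higher-order remainder on the right of \eqref{4.2.8}), further $\bar v_y$-weighted errors, and terms folded into the energy functional, whose $\Phi$-part comes from the multiple of the first equation tested above.

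Next I would treat the self-consistent force, which is the main obstacle. Using the third equation of \eqref{4.2.4}, the term $-\lambda^2\big[\frac1v(\frac{\phi_y}{v})_y-\frac1{\bar v}(\frac{\bar\phi_y}{\bar v})_y\big]$ on the right of the $\Psi$-equation is rewritten as $\frac{\bar v}{v}e^{\bar\phi}-e^{\phi}$ plus a term with the small factor $\frac1{\bar v}-\frac1v=-\frac{\Phi_y}{v\bar v}$; a Taylor expansion around $[\bar v,\bar\phi]$ gives $\frac{\bar v}{v}e^{\bar\phi}-e^{\phi}=-\frac{e^{\bar\phi}}{\bar v}\Phi_y-e^{\bar\phi}\tilde\phi+O(|\Phi_y|^2+|\tilde\phi|^2)$. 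Paired with $\Psi$, the $\Phi_y$-piece, treated as above, contributes an extra favorable dissipation $-\frac1{2s}\int(\tfrac{e^{\bar\phi}}{\bar v})_y\Psi^2\,dy\ge 0$ — note $(\tfrac{e^{\bar\phi}}{\bar v})_y<0$ since $\bar\phi_y<0<\bar v_y$ — plus another $\Phi_t\Psi$-leftover, while the $\tilde\phi$-piece leaves the coupling $-\int e^{\bar\phi}\tilde\phi\,\Psi\,dy$. To close these couplings I would test the third equation of \eqref{4.2.4} against $\tilde\phi$, which by elliptic coercivity yields $\lambda^2\int v^{-1}\tilde\phi_y^2\,dy+\int\bar v e^{\bar\phi}\tilde\phi^2\,dy$ on the left — the control of $\|[\tilde\phi,\tilde\phi_y]\|_{L^2}^2$ appearing in the lemma — against $-\int e^{\bar\phi}\Phi_y\tilde\phi\,dy$, a $\bar\phi_y$-weighted (hence small) bilinear term, and $\int\CN_2\tilde\phi\,dy$. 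The $O(1)$ couplings among $\Psi$, $\Phi$, $\Phi_y$ and $\tilde\phi$ cannot be discarded by Young's inequality; instead they assemble, together with the $\|\Psi\|^2$, $\|\Phi\|^2$ and $\|\tilde\phi\|^2$ terms, into a single quadratic form — this is the form \eqref{ad.qfT} — whose positive definiteness, once the electron contribution partly cancels the ``$+1$'' in the effective pressure $(T+1)/\bar v^2$, holds precisely when $T>0$. This is the structural reason the lemma, and hence Theorem \ref{thm1.3}, requires $T>0$.

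Finally I would close the estimate. Every remaining term is either one of the genuinely nonlinear contributions $\CJ_1,\CN_1,\CN_2$ and the quadratic Poisson remainders — each carrying a profile factor $\bar v_y$, $\bar u_y$ or $\bar\phi_y$, hence of size $O(|v_+-v_-|)$ by the decay estimates of Proposition \ref{prop1.5} and $|v_+-v_-|\le\tilde\vep_1$, or else a perturbation factor, hence $O(e_1)$ by \eqref{4.2.1} and Sobolev embedding — or a commutator error from integrating by parts against the $y$-dependent weights, again carrying a $\bar v_y$. By Young's inequality each is bounded by $\eta\big(\|\Psi_y\|_{L^2}^2+\|[\tilde\phi,\tilde\phi_y]\|_{L^2}^2\big)+C(e_1+\tilde\vep_1)\big\{\|[\Phi_y,\Psi_y]\|_{H^1}^2+\|\tilde\phi\|_{H^2}^2+\|\tilde\phi_t\|_{H^1}^2\big\}$. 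Taking $\eta$ small to absorb the first group into the dissipation produced above, then $e_1$ and $\tilde\vep_1$ small so that the energy functional stays equivalent to $\|[\Phi,\Psi,\tilde\phi,\tilde\phi_y](t)\|_{L^2}^2$, and integrating over $[0,M]$ yields \eqref{4.2.8}; the term $\|\tilde\phi(0)\|_{H^1}^2$ is present because the energy functional at $t=0$ involves $\tilde\phi(0),\tilde\phi_y(0)$, which are not controlled by $\|[\Phi_0,\Psi_0]\|_{L^2}$ alone.
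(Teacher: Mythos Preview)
Your plan follows the right pattern (anti-derivative energy, compressibility gives $\bar v_y$-weighted damping, Poisson coupling is the obstruction, $T>0$ is needed for coercivity), but the execution diverges from the paper at two points where the details actually matter, and as written the argument does not close.

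\medskip
\textbf{The pressure cross term.} You test the $\Psi$-equation by $\Psi$ and then substitute $\Phi_y=s^{-1}(\Phi_t-\Psi_y)$ into $-\int (T+1)\bar v^{-2}\Phi_y\Psi\,dy$. This leaves a $\Phi_t\Psi$ piece that you push into $\tfrac{d}{dt}\int\bar v^{-2}\Phi\Psi$ minus $\int\bar v^{-2}\Phi\Psi_t$, and then expand $\Psi_t$. That cascade produces, among other things, an $O(1)$ term of the form $\int\Phi_y\Psi_y$ (from the viscous part of $\Psi_t$), which by Young gives $C\|\Phi_y\|_{L^2}^2$ with \emph{no} small prefactor. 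But in \eqref{4.2.8} the $\|\Phi_y\|_{H^1}^2$ term carries the factor $(e_1+\tilde\vep_1)$; an $O(1)$ contribution would not fit. The paper avoids this entirely by testing with $\bar v^2\Psi$ and $(T+1)\Phi$: then $-(T+1)\Phi_y\Psi-(T+1)\Phi\Psi_y$ is an exact $y$-derivative, and the damping $s\bar v\bar v_y\Psi^2$ comes straight from $-s\bar v^2\Psi\Psi_y$ with no substitution at all.

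\medskip
\textbf{The force term and the energy for $\tilde\phi$.} You replace the $\lambda^2[\cdots]$ term on the right of the $\Psi$-equation by the Poisson equation, obtaining an $O(1)$ coupling $-\int e^{\bar\phi}\tilde\phi\,\Psi\,dy$, and then test the Poisson equation separately by $\tilde\phi$ to get an elliptic estimate $\|\tilde\phi\|_{H^1}^2\lesssim\|\Phi_y\|_{L^2}^2$. But that elliptic bound is \emph{instantaneous}: it controls $\|[\tilde\phi,\tilde\phi_y](t)\|_{L^2}^2$ only in terms of $\|\Phi_y(t)\|_{L^2}^2$, which is not on the right of \eqref{4.2.8}. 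You therefore cannot place $\|[\tilde\phi,\tilde\phi_y](t)\|_{L^2}^2$ on the left. The paper instead rewrites the force as in \eqref{4.2.12} to isolate $-\lambda^2\tilde\phi_y\Psi_y$, uses $\Psi_y=\Phi_t-s\Phi_y$ to turn this into $-\lambda^2\tilde\phi_t\Phi_y+s\lambda^2\tilde\phi_y\Phi_y$ plus a time-derivative $-(\lambda^2\tilde\phi_y\Phi)_t$, and only \emph{then} substitutes the Poisson relation \eqref{4.2.15} for $\Phi_y$. This produces genuine time-derivative terms $\big[\tfrac{\lambda^2\bar v}{2}\tilde\phi^2+\tfrac{\lambda^4e^{-\bar\phi}}{2\bar v}\tilde\phi_y^2\big]_t$, so that $\|\tilde\phi\|^2$ and $\|\tilde\phi_y\|^2$ sit inside the energy functional $\CE_1(t)$ of \eqref{E1}. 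This route is also what generates the $\tilde\phi_t$, $\tilde\phi_{ty}$ errors in $\CI_1$, which is why $\|\tilde\phi_t\|_{H^1}^2$ appears on the right of \eqref{4.2.8}; in your scheme there is no clear source for those terms.

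\medskip
\textbf{The quadratic form.} The coercivity condition \eqref{ad.qfT} is a $2\times 2$ form in $(\Phi,\tilde\phi_y)$, with off-diagonal entry $-\lambda^2\tilde\phi_y\Phi$ coming from the $(\tilde\phi_y\Phi)_t$ manipulation above; there is no $\Psi$ or $\tilde\phi$ in it. Your description of the form as assembling $\Psi,\Phi,\Phi_y,\tilde\phi$ couplings is not what \eqref{ad.qfT} is, and the $\Psi$--$\tilde\phi$ coupling your route produces has no counterpart in $\CE_1$.
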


\begin{proof}
Firstly, it holds from Sobolev embedding $H^1(\R)\hookrightarrow L^{\infty}(\R)$ as well as the a priori assumption \eqref{4.2.1} that
\begin{align}\label{4.2.7}
\|[\Phi,\Psi,\tilde{\phi}](t)\|_{L^{\infty}}+\|[\Phi_y,\Psi_y,\tilde{\phi}_y](t)\|_{L^{\infty}}
\leq {C e_1},
\end{align}
with a generic constant $C>0$.
Then, for 
{$e_1$} suitably small, in terms of \eqref{def.relvu} and $\tilde{v}=\Phi_y$, we have
\begin{equation}\label{4.2.7-1}
\ubar{V}\leq v=\b{v}+\tilde{v}\leq \bar{V},
\end{equation}
for two positive constants 
{$\ubar{V}$, $\b{V}>0$}. Multiplying the first and second equations of $\eqref{4.2.4}$
by $(T+1)\Phi$ and $\bar{v}^2\Psi$ respectively and adding them up, we have
\begin{multline}\label{4.2.9}
\left\{\f{T+1}{2}\Phi^2+\f{\b{v}^2}{2}\Psi^2\right\}_t+\{\cdots\}_y+s\b{v}\b{v}_y\Psi^2+\m\b{v}\Psi_y^2\\
=-\mu\b{v}_y\Psi\Psi_y+(\CJ_1+\CN_1)\b{v}^2\Psi-\lambda^2\b{v}^2\Psi\left[\f{1}{v}\left(\f{\phi_y}{v}\right)_y-\f{1}{\b{v}}\left(\f{\b{\phi}_y}{\b{v}}\right)_y\right],
\end{multline}
where the second term $\{\cdots\}_y$ on the left stands for the total derivative term and will disappear after taking integration with respect to $y$. Note that the coefficient of $\Psi^2$ in the third term on the left is positive due to \eqref{4.4} for the compressibility of the shock profile. Now we estimate the right-hand side of \eqref{4.2.9} term by term. By Cauchy-Schwarz, the first term is bounded as
\begin{align}\label{4.2.10}
|\mu\b{v}_y\Psi\Psi_y|\leq \eta s\b{v}\b{v}_y\Psi^2+C_{\eta}|v_+-v_-|\cdot |\Psi_y|^2,
\end{align}
with an arbitrary constant $0<\eta<1$ to be chosen later.
The second term on the right-hand side of \eqref{4.2.9} comes from inhomogeneous and nonlinear contributions. Therefore, it holds from \eqref{4.4}, \eqref{4.2.7} and \eqref{4.2.7-1} that
\begin{equation}\label{4.2.11}
\big|(\CJ_1+\CN_1)\b{v}^2\Psi\big|\leq \eta s\b{v}\b{v}_y\Psi^2+\{C_\eta|v_+-v_-|+C\sqrt{\CE(t)}\}\left(\Phi_y^2+\Psi_{yy}^2+\tilde{\phi}_y^2\right).
\end{equation}
Here and in the sequel we have used the notation $\CE(t)$ given in \eqref{E}.
To estimate the last term on the right-hand side of \eqref{4.2.9}, we first rewrite it as
\begin{equation}\label{4.2.12}
\lambda^2\bigg[\f{1}{v}\left(\f{\phi_y}{v}\right)_y-\f{1}{\b{v}}\left(\f{\b{\phi}_y}{\b{v}}\right)_y\bigg]\b{v}^2\Psi
=\lambda^2\bigg[\left(\f{\phi_y}{v^2}-\f{\b{\phi}_y}{\b{v}^2}\right)\b{v}^2\Psi\bigg]_y-\lambda^2\tilde{\phi}_y\Psi_y+\CN_3,
\end{equation}
where $\CN_3$ is denoted by
\begin{equation}
\CN_3\equiv-\lambda^2\b{v}^2\left(\f{1}{v^2}-\f1{\b{v}^2}\right)\phi_y\Psi_y-2\lambda^2\b{v}\b{v}_y\Psi\left(\f{\phi_y}{v^2}-\f{\b{\phi}_y}{\b{v}^2}\right)+
\lambda^2\b{v}^2\Psi\left(\f{v_y\phi_y}{v^3}-\f{\b{v}_y\b{\phi}_y}{\b{v}^3}\right).\nonumber
\end{equation}
Using \eqref{4.2.7} and \eqref{4.2.7-1}, it is direct to show that $\CN_3$ is bounded by
\begin{align}\label{4.2.13}
|\CN_3|\leq \eta|\b{v}_y|\Psi^2+\{C_{\eta}|v_+-v_-|+C\sqrt{\CE(t)}\}\left(\Phi_{y}^2+\Psi_y^2+\tilde{\phi}_y^2+\Phi_{yy}^2\right).
\end{align}
Next, substituting the first equation of \eqref{4.2.4} into the second term on the right-hand side of \eqref{4.2.12}, one has
\begin{eqnarray}
-\lambda^2\tilde{\phi}_y\Psi_y&=&-\lambda^2\tilde{\phi}_y\Phi_t+s\lambda^2\tilde{\phi}_y\Phi_y\notag\\
&=&-\lambda^2
\left(\tilde{\phi}_y\Phi\right)_t+\lambda^2\tilde{\phi}_{yt}\Phi+s\lambda^2\tilde{\phi}_y\Phi_y\notag\\
&=&-\lambda^2
\left(\tilde{\phi}_y\Phi\right)_t+\lambda^2\left(\tilde{\phi}_t\Phi\right)_y-\lambda^2\tilde{\phi}_t\Phi_y+s\lambda^2\tilde{\phi}_y\Phi_y.\label{4.2.14}
\end{eqnarray}
Now it remains to deal with the last two terms in the last line of \eqref{4.2.14}. For this, one should turn to the Poisson equation. In fact, 
it follows from the third equation of \eqref{4.2.4} that
\begin{equation}\label{4.2.15}
\Phi_y=\lambda^2e^{-\b{\phi}}\left(\f{\phi_y}{v}-\f{\b{\phi}_y}{\b{v}}\right)_y-\b{v}\tilde{\phi}+e^{-\b{\phi}}\CN_2.
\end{equation}
Then one has
\begin{align}\label{4.2.16}
-\lambda^2\tilde{\phi}_t\Phi_y&=-\lambda^4e^{-\b{\phi}}\left(\f{\phi_y}{v}-\f{\b{\phi}_y}{\b{v}}\right)_y\tilde{\phi}_t
+\lambda^2\b{v}\tilde{\phi}_t\tilde{\phi}-\lambda^2e^{-\b{\phi}}\tilde{\phi}_t\CN_2\nonumber\\
&=
\bigg[\f{\lambda^2\b{v}}{2}\tilde{\phi}^2+\f{\lambda^4e^{-\b{\phi}}}{2\b{v}}\tilde{\phi}_y^2\bigg]_t-
\bigg[\lambda^4e^{-\b{\phi}}\left(\f{\phi_y}{v}-\f{\b{\phi}_y}{\b{v}}\right)\tilde{\phi}_t\bigg]_y+\CI_1,
\end{align}
where  $\CI_1$ is denoted by
$$
\CI_1\equiv\lambda^4e^{-\b{\phi}}\bigg[\left(\f1v-\f{1}{\bar{v}}\right)\phi_y\tilde{\phi}_{ty}-\b{\phi}_y\left(
\f{\phi_y}{v}-\f{\b{\phi}_y}{\b{v}}\right)\tilde{\phi}_t\bigg]-\lambda^2e^{-\b{\phi}}\tilde{\phi}_t\CN_2.
$$
Using \eqref{4.2.7} for 
{$\tilde{\vep}_1$} suitably small, it is straightforward to bound $\CI_1$ by
\begin{equation}\label{4.2.17}
|\CI_1|\leq C\{|v_+-v_-|+\sqrt{\CE(t)}\}\left(|\tilde{\phi}|^2+|\tilde{\phi}_t|^2+|\tilde{\phi}_{ty}|^2+|\Phi_y|^2\right).
\end{equation}
In the same way as before, the last term on the right-hand side of \eqref{4.2.14} can be computed as 
\begin{align}\label{4.2.18}
s\lambda^2\tilde{\phi}_y\Phi_y&=s\lambda^4e^{-\b{\phi}}\left(\f{\phi_y}{v}-\f{\b{\phi}_y}{\b{v}}\right)_y\tilde{\phi}_y
-s\lambda^2\b{v}\tilde{\phi}\tilde{\phi}_y+s\lambda^2e^{-\b{\phi}}\CN_2\tilde{\phi}_y\nonumber\\
&=\bigg[s\lambda^4e^{-\b{\phi}}\left(\f{\phi_y}{v}-\f{\phi_y}{\b{v}}\right)\tilde{\phi}_y+
\f{s\lambda^4e^{-\b{\phi}}\tilde{\phi}_y^2}{2\b{v}}-\f{s\lambda^2\b{v}\tilde{\phi}^2}{2}\bigg]_y+\CI_2,
\end{align}
where  $\CI_2$ is denoted by
\begin{multline*}
\CI_2\equiv s\lambda^4e^{-\b{\phi}}\left\{\left(\f{\b{\phi}_y}{2\b{v}}-{\f{\b{v}_y}{2\b{v}^2}}\right)\tilde{\phi}_y^2+
\left(\f{\phi_y}{v}-\f{\phi_y}{\b{v}}\right)\left(\b{\phi}_y\tilde{\phi}_y-\tilde{\phi}_{yy}\right)
\right\}\\
+s\lambda^2e^{-\b{\phi}}\CN_2\tilde{\phi}_y+
\f{s\lambda^2\b{v}_y\tilde{\phi}^2}{2}.
\end{multline*}
One can bound $\CI_2$ by
\begin{align}\label{4.2.18-1}
|\CI_2|\leq C\{|v_+-v_-|+\sqrt{\CE(t)}\}\left(|\tilde{\phi}|^2+|\tilde{\phi}_y|^2+|\tilde{\phi}_{yy}|^2+|\Phi_y|^2\right).
\end{align}
In sum, collecting all the above estimates \eqref{4.2.10}, \eqref{4.2.11}, \eqref{4.2.12}, \eqref{4.2.13}, \eqref{4.2.14}, \eqref{4.2.16}, \eqref{4.2.17}, \eqref{4.2.18} and \eqref{4.2.18-1}, we bound the right-hand side of \eqref{4.2.9} by
\begin{align}\label{4.2.19}
&-
\left\{\f{\lambda^2\b{v}\tilde{\phi}^2}{2}+\f{\lambda^4e^{-\b{\phi}}\tilde{\phi}_y^2}{2\b{v}}
-\lambda^2\tilde{\phi}_y\Phi\right\}_t+\{\cdots\}_y
+\eta s\b{v}\b{v}_y\Psi^2\nonumber\\
&+\{C_{\eta}|v_+-v_-|+C\sqrt{\CE(t)}\}\bigg\{\sum_{i=1}^2\big|\pa^i_y{[\Phi,\Psi]}\big|^2+\sum_{i=0}^2\big|\pa^i_y\tilde{\phi}\big|^2
+\sum_{i=0}^1\big|\pa_y^i\tilde{\phi}_t\big|\bigg\},
\end{align}
with an arbitrary constant $0<\eta<1$ to be chosen later.
Substituting \eqref{4.2.19} into \eqref{4.2.9}, integrating it with respect to $y$, and taking a suitably small constant $\eta>0$, one obtains that
\begin{multline}\label{4.2.20}
{\CE_1'(t)}+\int_{\mathbb{R}}\left(s\b{v}\b{v}_y\Psi^2+\mu\b{v}\Psi_y^2\right)\dd y\\
\leq C\{|v_+-v_-|+\sqrt{\CE(t)}\}\left(\|[\Phi_y,\Psi_y,\tilde{\phi}_t]\|_{H^1}^2+\|\tilde{\phi}\|_{H^2}^2\right),
\end{multline}
for all $t\in[0,M]$, where $\CE_1(t)$ is denoted by
\begin{align}\label{E1}
\CE_1(t)\equiv\int_{\mathbb{R}}\left(\f{\b{v}^2\Psi^2}{2}+\f{\lambda^2\b{v}\tilde{\phi}^2}{2}+\f{(T+1)\Phi^2}{2}+
\f{\lambda^4e^{-\b{\phi}}\tilde{\phi}^2_y}{2\b{v}}-\lambda^2\tilde{\phi}_y\Phi\right)\dd y.
\end{align}
Finally, one can check that $\CE_1(t)$ is a nonnegative energy functional. Indeed, by the Poisson equation, one has
\begin{equation}
\label{ad.est.p}
\left|\b{v}^{-1}e^{-\b{\phi}}-1\right|=\left|\lambda^2\b{v}^{-1}e^{-\b{\phi}}(\b{v}^{-1}\b{\phi}_y)_y\right|\leq C|v_+-v_-|.
\end{equation}
Then, due to \eqref{4.2.2} with 
{$\tilde{\vep}_1$} suitably small,
the quadratic integrand of $\CE_1(t)$ has a lower bound as
\begin{equation}
\label{ad.qfT}
\f{(T+1)\Phi^2}{2}+
\f{\lambda^4e^{-\b{\phi}}\tilde{\phi}^2_y}{2\b{v}}-\lambda^2\tilde{\phi}_y\Phi\geq c
\left(|\Phi|^2+|\tilde{\phi}_y|^2\right),
\end{equation}
for a generic constant $c>0$. {Here we have essentially used the condition $T>0$.} Therefore, \eqref{4.2.8} follows from integrating \eqref{4.2.20} over $[0,t]$. This completes the proof of Lemma \ref{lem4.2}.
\end{proof}

Next, we need to derive the dissipation terms $\|[\Phi_y,\tilde{\phi},\tilde{\phi}_y,\tilde{\phi}_{yy}]\|_{L^2}$ as well as the dissipation of $\tilde{\phi}$ in $H^2$.

\begin{lemma}\label{lem4.3}
Under the assumptions of Proposition \ref{prop4.1},
it holds that
\begin{align}
&\|\Phi_y(t)\|_{L^2}^2+\int_0^t\left\{\|\Phi_y(\tau)\|_{L^2}^2+\|\tilde{\phi}(\tau)\|_{H^2}^2\right\}\dd \tau\notag\\
&\leq C\|[\Phi_{0y},\Psi_0]\|_{L^2}^2 
+
{C(e_1+\tilde{\vep}_1)}
\int_0^t\|[\Phi_{yy},\Psi_{yy}](\tau)\|_{L^2}^2
\dd \tau\nonumber\\
&\quad+C\left(\|\Psi(t)\|_{L^2}^2
+\int_0^t\left\{\|\sqrt{s\b{v}\b{v}_y}\Psi(\tau)\|_{L^2}^2
+\|\Psi_y(\tau)\|_{L^2}^2\right\}\dd \tau\right),
\label{4.2.21}
\end{align}
and
\begin{equation}\label{4.2.21-1}
\|\tilde{\phi}(t)\|_{H^2}^2\leq C\|\Phi_y(t)\|_{L^2}^2
+{C(e_1+\tilde{\vep}_1)}
\|\Phi_{yy}(t)\|_{L^2}^2,
\end{equation}
for all $t\in [0,M]$.
\end{lemma}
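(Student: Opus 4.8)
The plan is to prove the two inequalities of Lemma~\ref{lem4.3} in the order $\eqref{4.2.21-1}$ then $\eqref{4.2.21}$, since the elliptic bound $\eqref{4.2.21-1}$ is what allows one to control the Poisson coupling in the dissipation estimate $\eqref{4.2.21}$. For $\eqref{4.2.21-1}$ I would regard the third equation of $\eqref{4.2.4}$ as a uniformly elliptic equation for $\tilde{\phi}$. Writing $v=\b{v}+\Phi_y$ and $\frac{\phi_y}{v}-\frac{\b{\phi}_y}{\b{v}}=\frac{\tilde{\phi}_y}{v}-\frac{\b{\phi}_y\Phi_y}{v\b{v}}$, it takes the form
\begin{equation*}
\b{v}e^{\b{\phi}}\tilde{\phi}-\lambda^{2}\Big(\frac{\tilde{\phi}_y}{v}\Big)_y=-e^{\b{\phi}}\Phi_y-\lambda^{2}\Big(\frac{\b{\phi}_y\Phi_y}{v\b{v}}\Big)_y+\CN_2,
\end{equation*}
whose zeroth-order coefficient $\b{v}e^{\b{\phi}}$ equals $1+O(|v_+-v_-|)$ by \eqref{ad.est.p} (in particular is positive) and whose leading coefficient $\lambda^{2}/v$ is bounded below by \eqref{4.2.7-1}. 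Testing against $\tilde{\phi}$, using $\b{\phi}_y=O(|v_+-v_-|)$ from Proposition~\ref{prop1.5} and the Sobolev bound \eqref{4.2.7} on the quadratic term $\CN_2$, gives $\|\tilde{\phi}\|_{L^2}^2+\|\tilde{\phi}_y\|_{L^2}^2\le C\|\Phi_y\|_{L^2}^2$ for $\lambda$ fixed; solving the displayed equation algebraically for $\tilde{\phi}_{yy}$ and using $\|v_y\|_{L^{\infty}}\le C(e_1+\tilde{\vep}_1)$ then yields $\|\tilde{\phi}_{yy}\|_{L^2}\le C\|\Phi_y\|_{L^2}+C(e_1+\tilde{\vep}_1)\|\Phi_{yy}\|_{L^2}$, which is $\eqref{4.2.21-1}$.

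For $\eqref{4.2.21}$ I would use the classical anti-derivative energy argument. Multiplying the second equation of $\eqref{4.2.4}$ by $\Phi_y$ and integrating in $y$, one rewrites $\int\Psi_t\Phi_y$ as $\frac{d}{dt}\int\Psi\Phi_y+\int\Psi_y^2+s\int\Psi_y\Phi_y$ by means of the first equation $\Psi_y=\Phi_t-s\Phi_y$, so that the term $s\int\Psi_y\Phi_y$ cancels the convective contribution of $-s\Psi_y$; integrating the viscous cross term $\int\frac{\mu}{\b{v}}\Psi_{yy}\Phi_y$ by parts and again using $\Psi_y=\Phi_t-s\Phi_y$ converts it into $\tfrac12\frac{d}{dt}\int\frac{\mu}{\b{v}}\Phi_y^2$ up to terms carrying the factor $|\b{v}_y|=O(|v_+-v_-|)$. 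The candidate dissipation is $\int\frac{T+1}{\b{v}^2}\Phi_y^2$. The crucial step is the self-consistent force term $-\lambda^{2}\int\big[\frac1v(\frac{\phi_y}{v})_y-\frac1{\b{v}}(\frac{\b{\phi}_y}{\b{v}})_y\big]\Phi_y$: using the third equation of $\eqref{4.2.4}$ its integrand equals $\frac{e^{\b{\phi}}}{\b{v}}\Phi_y+e^{\b{\phi}}\tilde{\phi}$ plus genuinely quadratic or profile-derivative remainders, and then (i) $\int\frac{e^{\b{\phi}}}{\b{v}}\Phi_y^2$ merges with $\int\frac{T+1}{\b{v}^2}\Phi_y^2$, the combined coefficient $\frac{T+1}{\b{v}^2}-\frac{e^{\b{\phi}}}{\b{v}}=\frac1{\b{v}^2}(T-O(|v_+-v_-|))$ being bounded below by a positive multiple of $T$ for $|v_+-v_-|$ small—this is where $T>0$ enters—and (ii) $\int e^{\b{\phi}}\tilde{\phi}\Phi_y$ is handled by testing the third equation of $\eqref{4.2.4}$ against $\tilde{\phi}$, which reveals $\int e^{\b{\phi}}\tilde{\phi}\Phi_y=-\int\b{v}e^{\b{\phi}}\tilde{\phi}^2-\lambda^{2}\int\frac{\tilde{\phi}_y^2}{v}+(\text{l.o.t.})$, so that this coupling in fact provides extra positive $\tilde{\phi}$-dissipation. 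All nonlinear terms $\CJ_1,\CN_1,\CN_2$ and remainders are controlled, via \eqref{4.2.7}, \eqref{4.2.7-1} and $\eqref{4.2.21-1}$, by $(e_1+\tilde{\vep}_1)\big(\|[\Phi_y,\Phi_{yy},\Psi_y,\Psi_{yy}]\|_{L^2}^2+\|\tilde{\phi}\|_{H^2}^2\big)$. Collecting everything yields
\begin{equation*}
\frac{d}{dt}\int_{\R}\Big(\Psi\Phi_y-\frac{\mu}{2\b{v}}\Phi_y^{2}\Big)\dd y+c\big(\|\Phi_y\|_{L^2}^2+\|\tilde{\phi}\|_{H^1}^2\big)\le \|\Psi_y\|_{L^2}^2+(e_1+\tilde{\vep}_1)\big(\|[\Phi_{yy},\Psi_{yy}]\|_{L^2}^2+\cdots\big);
\end{equation*}
integrating over $[0,t]$, moving $\frac{\mu}{2\b{v}}\Phi_y^{2}(t)$ to the left, estimating the boundary term $\int\Psi\Phi_y$ at $t=0$ and $t$ by Young's inequality (the part at time $t$ absorbed by $\|\Phi_y(t)\|_{L^2}^2$ at the price of $\|\Psi(t)\|_{L^2}^2$), and upgrading $\int_0^t\|\tilde{\phi}\|_{H^1}^2$ to $\int_0^t\|\tilde{\phi}\|_{H^2}^2$ through $\eqref{4.2.21-1}$, delivers $\eqref{4.2.21}$.

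I expect the main obstacle to be precisely this self-consistent force term: a direct Cauchy--Schwarz estimate leaves an effective dissipation coefficient for $\Phi_y^2$ that may degenerate, so one must exploit the structure of the Poisson equation twice—once to peel off the $\frac{e^{\b{\phi}}}{\b{v}}\Phi_y$ piece (which shrinks, but does not kill, the $T+1$ coefficient) and once, tested against $\tilde{\phi}$, to turn the leftover $e^{\b{\phi}}\tilde{\phi}\Phi_y$ coupling into a favourable term. This is the same ``use the Poisson equation structurally'' idea as in Proposition~\ref{lm3.2.1}, and it is also the reason $T>0$ is needed here, in accordance with the standing assumption of this section.
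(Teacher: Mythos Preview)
Your proposal is correct and the strategy is sound, but it differs in execution from the paper's argument in an instructive way.

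\textbf{Where the two proofs agree.} Both treat the linearized pressure term $\frac{T+1}{\bar v^{2}}\Phi_y$ as the source of $\Phi_y$--dissipation, both use the equation $\Phi_t=s\Phi_y+\Psi_y$ to convert the time and viscous cross terms into a time derivative of an energy of the form $c\,\Phi_y^{2}-c'\,\Psi\Phi_y$, and both identify the self-consistent force as the delicate term requiring the Poisson structure and the hypothesis $T>0$. Both also obtain \eqref{4.2.21-1} from the elliptic Poisson equation tested against $\tilde\phi$ and $\tilde\phi_{yy}$.

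\textbf{Where they differ.} For the force term the paper does not substitute the Poisson equation into the momentum equation; instead it isolates the principal part $-\frac{\lambda^{2}}{\mu\bar v}\Phi_y\tilde\phi_{yy}$ and couples it with a \emph{separate} identity obtained by multiplying the Poisson equation \eqref{4.2.15} by $-\frac{\lambda^{2}}{\mu\bar v}\tilde\phi_{yy}$. This produces the quadratic form
\[
\frac{T+1}{\mu\bar v}\Phi_y^{2}+\frac{\lambda^{4}}{\mu\bar v}\tilde\phi_{yy}^{2}-\frac{2\lambda^{2}}{\mu\bar v}\Phi_y\tilde\phi_{yy},
\]
whose positivity for $T>0$ simultaneously yields the $\Phi_y$ and the $\tilde\phi_{yy}$ dissipation; \eqref{4.2.21-1} then falls out of the same computation. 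Your route instead replaces $\lambda^{2}\big(\frac{\phi_y}{v}-\frac{\bar\phi_y}{\bar v}\big)_y$ directly by $e^{\bar\phi}\Phi_y+\bar v e^{\bar\phi}\tilde\phi$ via the Poisson equation, which shrinks the pressure coefficient to $\frac{T+1}{\bar v^{2}}-\frac{e^{\bar\phi}}{\bar v}\approx\frac{T}{\bar v^{2}}$ and leaves a cross term $e^{\bar\phi}\tilde\phi\,\Phi_y$ that you then convert into $\tilde\phi$--dissipation by testing Poisson against $\tilde\phi$. This is a perfectly valid alternative and is arguably more streamlined, since you avoid introducing $\tilde\phi_{yy}$ as an independent dissipation variable; the paper's packaging has the advantage that the $H^{2}$ control of $\tilde\phi$ comes for free from the same quadratic form rather than via a separate elliptic step.

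Two small caveats. First, your final energy identity has a sign inversion: with the multiplier $-\Phi_y$ one obtains $\frac{d}{dt}\int\big(\frac{\mu}{2\bar v}\Phi_y^{2}-\Psi\Phi_y\big)$, not the version you wrote, and this is the sign that makes $\frac{\mu}{2\bar v}\Phi_y^{2}(t)$ sit on the left after integration. Second, your route does not generate a $\bar v_y\Psi^{2}$ term, so the $\|\sqrt{s\bar v\bar v_y}\,\Psi\|_{L^{2}}^{2}$ contribution on the right of \eqref{4.2.21} is in fact unnecessary in your argument; this is harmless since it only strengthens the inequality.
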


\begin{proof}
Differentiating the first equation of \eqref{4.1.3} with respect to $y$, one has
\begin{align}\label{4.2.22-0}
\Phi_{ty}-s\Phi_{yy}-\Psi_{yy}=0.
\end{align}
Then, multiplying \eqref{4.2.22-0} and the second equation of \eqref{4.1.3} by $\Phi_y$ and $-\mu^{-1}\b{v}\Phi_y$ respectively and adding them together, it holds that
\begin{equation}\label{4.2.22}
\left(\f{\Phi_y^2}{2}\right)_t-\left(\f{s\Phi_y^2}{2}\right)_y+\sum_{j=3}^{8}\CI_j=0,
\end{equation}
where $\CI_j$ $(3\leq j\leq 8)$ are denoted by
\begin{equation}\nonumber
\begin{aligned}
&\CI_3\equiv-(T+1)\left(\f{1}{\b{v}+\Phi_y}-\f{1}{\b{v}}\right)\f{\b{v}\Phi_y}{\mu},\quad \CI_4\equiv-\mu^{-1}(\Psi_t-s\Psi_y){\b{v}\Phi_y},\\
&\CI_5\equiv -\Psi_{yy}\Phi_y+\b{v}\Phi_y\left(\f{\Psi_{yy}+\b{u}_y}{\b{v}+\Phi_y}-\f{\b{u}_y}{\b{v}}\right),\quad
\CI_6\equiv\f{\lambda^2\b{v}\Phi_y}{2\mu}\left[\left(\f{\phi_y}{v}\right)^2-\left(\f{\b{\phi}_y}{\b{v}}\right)^2\right],\\
&\CI_7\equiv-\mu^{-1}\lambda^2\b{v}(\b{v}^{-1}\b{\phi}_y)_y(\f1v-\f1{\b{v}})\Phi_y,\quad
\CI_8\equiv-\mu^{-1}\lambda^2v^{-1}\b{v}\Phi_y\left(\f{\phi_y}{v}-\f{\b{\phi}_y}{\b{v}}\right)_y.
\end{aligned}
\end{equation}
Now we estimate $\CI_3$ to $\CI_8$ term by term. Firstly, it holds that
\begin{equation}\nonumber
\CI_3=\f{(T+1)\Phi_y^2}{\mu(\b{v}+\Phi_y)}=\f{(T+1)\Phi_y^2}{\mu\bar{v}}
-\f{(T+1)\Phi_y^3}{\mu(\b{v}+\Phi_y)\b{v}}\geq \f{(T+1)\Phi_y^2}{\mu\bar{v}}-C\sqrt{\CE(t)}\Phi_y^2.
\end{equation}
For $\CI_4$, it is direct to compute 
\begin{align}
\CI_4&=-
\left(\f{\b{v}\Phi_y\Psi}{\mu}\right)_t+\f{\b{v}\Phi_{yt}\Psi}{\mu}+\f{s\b{v}\Psi_y\Phi_y}{\mu}\notag\\
&=-
\left(\f{\b{v}\Phi_y\Psi}{\mu}\right)_t+\f{\b{v}\Psi}{\mu}(s\Phi_{yy}+\Psi_{yy})+\f{s\b{v}\Psi_y\Phi_y}{\mu}\nonumber\\
&=-
\left(\f{\b{v}\Phi_y\Psi}{\mu}\right)_t+\{\cdots\}_y-\left\{\f{\b{v}_y}{\mu}(s\Psi\Phi_y+\Psi\Psi_y)
+\f{\b{v}\Psi_y^2}{\mu}\right\}.\label{4.2.22-1}
\end{align}
By Cauchy-Schwarz, the last term of \eqref{4.2.22-1} is bounded by
$$
\eta\Phi_y^2+C_{\eta}\{s\b{v}\b{v}_y\Psi^2+\Psi_y^2\},
$$
where $\eta>0$ can be small enough to be chosen later.
As for $\CI_5$ to $\CI_7$, we have
\begin{equation}\nonumber
|\CI_5|=\left|\left(\f{\b{v}}{v}-1\right)\left(\Psi_{yy}\Phi_y+\b{u}_y\Phi_y\right)\right|\leq C\{\sqrt{\CE(t)}+|v_+-v_-|\}\{|\Psi_{yy}|^2+|\Phi_y|^2\},
\end{equation}
and
\begin{equation}\nonumber
|\CI_6|+|\CI_7|\leq C\{\sqrt{\CE(t)}+|v_+-v_-|\}\{|\Phi_{y}|^2+|\tilde{\phi}_y|^2\}.
\end{equation}
Now it remains to estimate $\CI_8$, which is delicate.
Direct computations show that
\begin{multline}\label{4.2.22-2}
\CI_8=-\f{\lambda^2\Phi_y\tilde{\phi}_{yy}}{\mu\b{v}}\\
+\f{\lambda^2\b{v}\Phi_y}{\mu v}
\left\{\left(\f{\phi_{yy}}{\b{v}}-\f{\phi_{yy}}{v}\right)-\f{v}{\b{v}}\left(\f{\tilde{\phi}_{yy}}{v}
-\f{\tilde{\phi}_{yy}}{\b{v}}\right)+\left(\f{\phi_yv_y}{v^2}-
\f{\b{\phi}_y\b{v}_y}{\b{v}^2}\right)\right\}.
\end{multline}
The last term of \eqref{4.2.22-2} is bounded by
$$
C\{\sqrt{\CE(t)}+|v_+-v_-|\}\left\{|\Phi_y|^2+|\Phi_{yy}|^2+|\tilde{\phi}_y|^2+|\tilde{\phi}_{yy}|^2\right\}.
$$
Note that the first term on the right-hand  of \eqref{4.2.22-2} can not be  controlled by directly replacing $\Phi_y$ from \eqref{4.2.15} like what has been done earlier in \eqref{4.2.16}. Indeed, one has to include some estimates on $\tilde{\phi}_{yy}$ simultaneously so that the quadratic form consisting of $\Phi_y$ and $\tilde{\phi}_{yy}$ is strictly positive. For this purpose, multiplying \eqref{4.2.15} by $-\lambda^2\tilde{\phi}_{yy}/\mu\b{v}$, one has
\begin{align}\label{4.2.22-3}
&\f{\lambda^4\tilde{\phi}_{yy}^2}{\mu\b{v}}+\f{\lambda^2\tilde{\phi}_y^2}{\mu}
-\f{\lambda^2\tilde{\phi}_{yy}\Phi_y}{\mu\b{v}}-\left(\f{\lambda^2\tilde{\phi}\tilde{\phi}_y}{\mu}\right)_y\nonumber\\
&=\f{\lambda^2e^{-\b{\phi}}\tilde{\phi}_{yy}}{\mu \b{v}}\left\{
\lambda^2\left(\f{\phi_{yy}}{\b{v}}-\f{\phi_{yy}}{v}\right)\right.\notag\\
&\qquad\qquad\qquad\ \left.+\lambda^2\left(\f{\phi_y v_y}{v^2}-\f{\b{\phi}_y\b{v}_y}{\b{v}^2}\right)+\lambda^2(e^{\b{\phi}}-\b{v}^{-1})\tilde{\phi}_{yy}-\CN_2\right\}.
\end{align}
Due to \eqref{ad.est.p}, the right-hand side of \eqref{4.2.22-3} is bounded by
$$
C\{|v_+-v_-|+\sqrt{\CE(t)}\}\left\{|\tilde{\phi}|^2+|\tilde{\phi}_y|^2+|\tilde{\phi}_{yy}|^2+
{|\Phi_{yy}|^2}+|\Phi_y|^2\right\}.
$$
Collecting all the above estimates for $\CI_3$ to $\CI_8$ as well as  \eqref{4.2.22-3}, it follows from \eqref{4.2.22} that
\begin{align}\label{4.2.22-4}
&
\left(\f{\Phi_y^2}2-\f{\b{v}\Phi_y\Psi}{\mu}\right)_t+
\left\{\f{(T+1)\Phi_y^2}{\mu\b{v}}+\f{\lambda^4\tilde{\phi}_{yy}^2}{\mu\b{v}}
-\f{2\lambda^2\tilde{\phi}_{yy}\Phi_y}{\mu\b{v}}\right\}
+\f{\lambda^2\tilde{\phi}_y^2}{\mu}+\{\cdots\}_y\nonumber\\
&\leq \eta|\Phi_y|^2+C_{\eta}\left(s\b{v}\b{v}_y\Psi^2+\Psi_y^2\right)\notag\\
&\quad+C\{\sqrt{\CE(t)}+|v_+-v_-|\}
\left\{\sum_{i=0}^2|\pa_y^i\tilde{\phi}|^2+\sum_{i=1}^2|\pa_y^i\Phi|^2+|\Psi_{yy}|^2\right\},
\end{align}
for an arbitrary constant $0<\eta<1$. Note that the quadratic term  on the left-hand side has the lower bound as
$$
\f{\lambda^4\tilde{\phi}_{yy}^2}{\mu\b{v}}+\f{(T+1)\Phi_y^2}{\mu\b{v}}
-\f{2\lambda^2\tilde{\phi}_{yy}{\Phi_y}}{\mu\b{v}}\geq c
\left(\f{\lambda^4{\tilde{\phi}_{yy}^2}}{\mu\b{v}}+\f{(T+1)\Phi_y^2}{\mu \b{v}}\right),
$$
for a generic positive constant $c$.
Therefore, integrating \eqref{4.2.22-4} with respect to $y$ and taking $\eta>0$ suitably small, one has
\begin{align}\label{4.2.23}
&\f{\dd}{\dd t}\bigg(\f12\|\Phi_y(t)\|_{L^2}^2-\int_{\mathbb{R}}\f{\b{v}\Phi_y\Psi}{\mu}\dd y\bigg)+
\|\Phi_y\|_{L^2}^2+\|\tilde{\phi}_y\|_{H^1}^2\notag\\
&\leq C\int_{\mathbb{R}}\left(s\b{v}\b{v}_y\Psi^2+\Psi_y^2\right)\dd y\nonumber\\
&\quad+C\{\sqrt{\CE(t)}+|v_+-v_-|\}\left\{\|\Psi_{yy}\|_{L^2}^2+\|\Phi_y\|_{H^1}^2+\|\tilde{\phi}\|_{H^2}^2\right\}.
\end{align}
Moreover, multiplying the third equation of \eqref{4.2.4} by $\tilde{\phi}$, we obtain that
\begin{align}\label{4.2.24}
\b{v}e^{\b{\phi}}\tilde{\phi}^2+\f{\tilde{\phi}_y^2}{v}+\{\cdots\}_y=-\lambda^2\b{\phi}_y
\left(\f1v-\f1{\b{v}}\right)\tilde{\phi}_y-e^{\b{\phi}}\Phi_y\tilde{\phi}+\CN_2\tilde{\phi}. \end{align}
Integrating \eqref{4.2.24} with respect to $y$ and using Cauchy-Schwarz, it holds that
\begin{align}\label{4.2.25}
\|\tilde{\phi}(t)\|_{H^1}^2\leq C\|\Phi_y(t)\|_{L^2}^2+C\sqrt{\CE(t)}\|\tilde{\phi}(t)\|_{L^2}^2.
\end{align}
Recall \eqref{4.2.1} and \eqref{4.2.2}. Then, \eqref{4.2.21} follows from 
a suitable linear combination of \eqref{4.2.23} and \eqref{4.2.25} as well as letting $e_1$ and $\tilde{\vep}_1$ be small enough.
As to the $H^2$ estimate of $\tilde{\phi}$, we note that \eqref{4.2.22-3} gives
\begin{equation}\label{4.2.23-1}
\|\tilde{\phi}_y(t)\|_{H^1}^2\leq C\|\Phi_y(t)\|_{L^2}^2+C\{\sqrt{\CE(t)}+|v_+-v_-|\}\{\|\Phi_y(t)\|_{H^1}^2+
\|\tilde{\phi}\|_{H^2}^2\}.
\end{equation}
Therefore, \eqref{4.2.21-1} follows from combining \eqref{4.2.23-1} and \eqref{4.2.25} and letting $e_1$ and $\tilde{\vep}_1$ be further small enough. The proof for Lemma \ref{lem4.3} is complete.
\end{proof}

Now we are prepared to derive the higher order energy estimates on $[\Phi, \Psi]$ in Lemma \ref{lem4.4} and Lemma \ref{lem4.5} whose proof will be postponed to Section \ref{sec.a4} in Appendix. In fact, with $L^2_tH^2_x$ estimate of $\tilde{\phi}$ on hand, one can regard the terms induced by the self-consistent force $\tilde{\phi}$ as the inhomogeneous sources. 

\begin{lemma}\label{lem4.4}
Under the assumptions of Proposition \ref{prop4.1}, it holds that
\begin{equation}\label{4.2.26}
\|\tilde{u}(t)\|_{L^2}^2+\int_0^t\|\tilde{u}_y(s)\|_{L^2}^2\dd s 
\leq
 C
\|\tilde{u}_0\|_{L^2}^2+C
\int_0^t 
\|[\Phi_y,\Psi_y,\tilde{\phi}_y](s)\|_{L^2}^2
\dd s,
\end{equation}
and
\begin{equation}\label{4.2.27}
\|\tilde{u}_y(t)\|_{L^2}^2+\int_0^t\|\tilde{u}_{yy}(s)\|_{L^2}^2\dd s \leq
C \|\tilde{u}_{0y}\|_{L^2}^2 
+C
\int_0^t
\|[\tilde{v},\tilde{v}_y,\tilde{u}_y,\tilde{\phi}_y](s)\|_{L^2}^2
\dd s,
\end{equation}
for all $t\in [0,M]$.
\end{lemma}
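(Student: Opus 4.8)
The plan is to prove \eqref{4.2.26} and \eqref{4.2.27} by a straightforward (unweighted) energy estimate carried out directly on the equation for $\tilde{u}=u-\b{u}$, i.e.\ the second equation of \eqref{4.1.1}, and on its first $y$-derivative, letting the viscosity produce the dissipation $\|\tilde{u}_y\|_{L^2}^2$, respectively $\|\tilde{u}_{yy}\|_{L^2}^2$, and absorbing every remaining term into that dissipation or into the prescribed right-hand side. The ingredients I would use are: that $v=\b{v}+\Phi_y$ stays bounded above and below away from zero by \eqref{4.2.7-1}; that by \eqref{4.4} each of $\pa_y^i\b{v}$, $\pa_y^i\b{u}$, $\pa_y^i\b{\phi}$ with $i\geq1$ is $O(|v_+-v_-|^{i+1})$ in $L^\infty$; that by \eqref{4.2.1} one has $\|[\Phi,\Psi,\tilde{\phi}]\|_{H^2}\leq e_1$, so in particular $\tilde{v}=\Phi_y$, $\tilde{u}=\Psi_y$, $\tilde{\phi}$ and $\tilde{\phi}_y$ are small in $L^\infty$; and the one-dimensional Gagliardo--Nirenberg inequality $\|g\|_{L^\infty}^2\leq C\|g\|_{L^2}\|g_y\|_{L^2}$.

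For \eqref{4.2.26}, I would multiply the $\tilde{u}$-equation by $\tilde{u}$ and integrate over $\R$. The convection term $-s\tilde{u}_y\tilde{u}$ is an exact $y$-derivative and drops. Moving the pressure term to the right and integrating by parts gives $T\int\big(\f1v-\f1{\b{v}}\big)\tilde{u}_y=-T\int\f{\tilde{v}}{v\b{v}}\tilde{u}_y\leq\eta\|\tilde{u}_y\|_{L^2}^2+C_\eta\|\Phi_y\|_{L^2}^2$, while the viscous term gives $-\mu\int\big(\f{u_y}{v}-\f{\b{u}_y}{\b{v}}\big)\tilde{u}_y=-\mu\int\f{\tilde{u}_y^2}{v}+\mu\int\f{\b{u}_y\tilde{v}}{v\b{v}}\tilde{u}_y$, whose first term is $\leq-c\|\tilde{u}_y\|_{L^2}^2$ and whose second term, since $\|\b{u}_y\|_{L^\infty}\leq C|v_+-v_-|^2$, is absorbed. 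The electric-force term is $-\int\big(\f{\phi_y}{v}-\f{\b{\phi}_y}{\b{v}}\big)\tilde{u}=-\int\big(\f{\tilde{\phi}_y}{v}-\f{\b{\phi}_y\tilde{v}}{v\b{v}}\big)\tilde{u}\leq C\big(\|\tilde{\phi}_y\|_{L^2}^2+\|\Phi_y\|_{L^2}^2+\|\Psi_y\|_{L^2}^2\big)$. Collecting and choosing $\eta$ small yields $\f{\dd}{\dd t}\f12\|\tilde{u}\|_{L^2}^2+c\|\tilde{u}_y\|_{L^2}^2\leq C\|[\Phi_y,\Psi_y,\tilde{\phi}_y]\|_{L^2}^2$, and integration in time gives \eqref{4.2.26}.

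For \eqref{4.2.27}, I would differentiate the $\tilde{u}$-equation in $y$, multiply by $\tilde{u}_y$ and integrate; again the convection term drops. Moving the pressure term to the right and integrating by parts gives $T\int\big(\f1v-\f1{\b{v}}\big)_y\tilde{u}_{yy}$, which using $\big(\f1v-\f1{\b{v}}\big)_y=-\f{\tilde{v}_y}{v^2}+\b{v}_y\big(\f1{\b{v}^2}-\f1{v^2}\big)$ is $\leq\eta\|\tilde{u}_{yy}\|_{L^2}^2+C_\eta(\|\tilde{v}_y\|_{L^2}^2+\|\tilde{v}\|_{L^2}^2)$. The key point is that the electric force must be integrated by parts against $\tilde{u}_{yy}$ rather than having $\big(\,\cdot\,\big)_y$ expanded, since otherwise $\tilde{\phi}_{yy}$ would appear, which is \emph{not} present on the right of \eqref{4.2.27}; this gives $\int\big(\f{\phi_y}{v}-\f{\b{\phi}_y}{\b{v}}\big)\tilde{u}_{yy}=\int\big(\f{\tilde{\phi}_y}{v}-\f{\b{\phi}_y\tilde{v}}{v\b{v}}\big)\tilde{u}_{yy}\leq\eta\|\tilde{u}_{yy}\|_{L^2}^2+C_\eta(\|\tilde{\phi}_y\|_{L^2}^2+\|\tilde{v}\|_{L^2}^2)$. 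The viscous term, after one integration by parts, equals $-\mu\int\big(\f{u_y}{v}-\f{\b{u}_y}{\b{v}}\big)_y\tilde{u}_{yy}$; isolating $\f{\tilde{u}_{yy}}{v}$ inside $\big(\f{u_y}{v}-\f{\b{u}_y}{\b{v}}\big)_y$ produces the good term $-\mu\int\f{\tilde{u}_{yy}^2}{v}\leq-c\|\tilde{u}_{yy}\|_{L^2}^2$, and the remainder is a sum of terms each either carrying a factor $\pa_y^i\b{u}$ or $\pa_y^i\b{v}$ with $i\geq1$ — hence with a coefficient $O(|v_+-v_-|)$ allowing absorption into $\eta\|\tilde{u}_{yy}\|_{L^2}^2+C(\|\tilde{v}\|_{L^2}^2+\|\tilde{v}_y\|_{L^2}^2+\|\tilde{u}_y\|_{L^2}^2)$ — or the single genuinely borderline cubic term $\f{\tilde{u}_y\tilde{v}_y}{v^2}\tilde{u}_{yy}$, which I would bound by $C\|\tilde{u}_y\|_{L^\infty}\|\tilde{v}_y\|_{L^2}\|\tilde{u}_{yy}\|_{L^2}\leq Ce_1^{1/2}\|\tilde{u}_{yy}\|_{L^2}^{3/2}\|\tilde{v}_y\|_{L^2}$ via Gagliardo--Nirenberg and $\|\tilde{u}_y\|_{L^2}\leq\|\Psi\|_{H^2}\leq e_1$, and then by Young's inequality and $\|\tilde{v}_y\|_{L^2}\leq e_1$ by $\eta\|\tilde{u}_{yy}\|_{L^2}^2+C_\eta\|\tilde{v}_y\|_{L^2}^2$. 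Summing and choosing $\eta$, $e_1$, $\tilde{\vep}_1$ small gives $\f{\dd}{\dd t}\f12\|\tilde{u}_y\|_{L^2}^2+c\|\tilde{u}_{yy}\|_{L^2}^2\leq C\|[\tilde{v},\tilde{v}_y,\tilde{u}_y,\tilde{\phi}_y]\|_{L^2}^2$, whence \eqref{4.2.27} after integration in time. The only real difficulty is the bookkeeping of the self-consistent-force and nonlinear contributions — in particular performing the force integrations by parts against the top-order factor so as never to require $\|\tilde{\phi}_{yy}\|_{L^2}$, and handling the one borderline cubic term by interpolation together with the smallness in \eqref{4.2.1}.
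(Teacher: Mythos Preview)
Your proposal is correct and follows essentially the same approach as the paper: multiply the $\tilde{u}$-equation by $\tilde{u}$ (respectively take the inner product with $-\tilde{u}_{yy}$, which is equivalent to your differentiate-then-multiply-by-$\tilde{u}_y$), let the viscosity furnish the dissipation, and absorb the pressure, background, and self-consistent-force terms by Cauchy--Schwarz together with the smallness \eqref{4.2.1}--\eqref{4.2.2}. Your handling of the borderline cubic term $v^{-2}\tilde{v}_y\tilde{u}_y\tilde{u}_{yy}$ via Gagliardo--Nirenberg and the a priori bound $\|\tilde{u}_y\|_{L^2}\leq e_1$ is a minor variant of the paper's interpolation-plus-Young argument, and your emphasis on throwing the $y$-derivative of the force term onto $\tilde{u}_{yy}$ so that $\tilde{\phi}_{yy}$ never appears is exactly the point the paper exploits by testing directly against $-\tilde{u}_{yy}$.
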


Next, we derive the energy dissipation term $\|\tilde{v}_y\|_{L^2}$.

\begin{lemma}\label{lem4.5}
Under the assumptions of Proposition \ref{prop4.1}, it holds that
\begin{align}\label{4.2.32}
&\|\tilde{v}_y(t)\|_{L^2}^2+\int_0^t\|\tilde{v}_y(s)\|_{L^2}^2\dd s\notag\\
&\leq C\|[\tilde{v}_{0y},\tilde{u}_0]\|_{L^2}^2
+C\|\tilde{u}(t)\|_{L^2}^2
+Ce_1\int_0^t \|\tilde{u}_{yy}(s)\|_{L^2}^2\dd s \notag\\
&\quad +C\int_0^t\left\{\|\tilde{u}(s)\|_{H^1}^2+\|\tilde{v}(s)\|_{L^2}^2+
\|\tilde{\phi}_y(s)\|_{L^2}^2\right\}\dd s,
\end{align}
for all $t\in [0,M]$.
\end{lemma}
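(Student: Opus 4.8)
The plan is to derive the $\tilde v_y$-dissipation by testing the momentum equation against $\tilde v_y$ after rewriting it to isolate the leading pressure-type term, exactly in the spirit of the classical Matsumura–Nishihara argument for the isentropic Navier–Stokes equations, and then to absorb the extra self-consistent force contributions using the already-established $L^2_tH^2_x$ bound on $\tilde\phi$ coming from Lemma \ref{lem4.3}. First I would return to the second equation of \eqref{4.1.1} written in the perturbation variables; using $\tilde v=\Phi_y$ and $\tilde u=\Psi_y$ and the first equation $\tilde v_t=s\tilde v_y+\tilde u_y$, I would express $\mu(\tilde v_t/v)_y$-type terms and, crucially, rewrite the viscous term $\mu(u_y/v-\bar u_y/\bar v)_y$ so that it contains $\mu(\tilde u_y/v)_y=\mu(\tilde v_{t}/v)_y-s\mu(\tilde v_y/v)_y$ plus lower-order pieces involving $\bar v_y$, $\bar u_y$ and products with $\tilde v$. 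The aim is to arrive at an identity of the schematic form
\begin{equation*}
\mu\Bigl(\frac{\tilde v_y}{v}\Bigr)_{t}-\;(\text{const})\,T\,\frac{\tilde v_y}{\bar v^{2}}\;=\;\tilde u_t+\partial_y(\cdots)+(\text{lower order}),
\end{equation*}
which after multiplying by $\tilde v_y$ and integrating by parts in $t$ produces a good term $c\|\tilde v_y\|_{L^2}^2$ on the left (here the positivity of $T$, or more precisely $T+1>0$, is what makes the pressure term coercive) together with a time-boundary term that is controlled by $\|\tilde u(t)\|_{L^2}^2+\|\tilde v_y(t)\|_{L^2}^2$ with a small coefficient.

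The next step is to handle the cross term $\int \tilde u_t\,\tilde v_y\,\dd y$: integrating by parts in $t$ gives $\frac{\dd}{\dd t}\int \tilde u\,\tilde v_y\,\dd y-\int \tilde u\,\tilde v_{yt}\,\dd y$, and then $\tilde v_{yt}=\partial_y(s\tilde v_y+\tilde u_y)$ lets one integrate by parts in $y$ to trade $\int\tilde u\,\tilde v_{yt}$ for $\int\tilde u_y(s\tilde v_y+\tilde u_y)$, i.e. for terms of the form $s\int\tilde u_y\tilde v_y$ and $\int\tilde u_y^2$, both of which are already in the dissipation or can be absorbed via Cauchy–Schwarz into $\|\tilde v_y\|_{L^2}^2$ (small coefficient) plus $\|\tilde u_y\|_{L^2}^2$. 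This is why the right-hand side of \eqref{4.2.32} is allowed to carry $\|\tilde u(t)\|_{L^2}^2$ as a boundary contribution and $\int_0^t\|\tilde u\|_{H^1}^2\,\dd\tau$. The remaining error terms split into two families: (i) terms quadratic in the perturbation times a factor of $\bar v_y$ or $\bar u_y$ (hence of size $O(|v_+-v_-|)$ by \eqref{4.4}) multiplied by $\tilde v_y$, handled by Cauchy–Schwarz with the smallness of $|v_+-v_-|$; (ii) genuinely nonlinear terms, cubic or higher in $[\tilde v,\tilde u,\tilde\phi]$ and their first derivatives, bounded by $C\sqrt{\CE(t)}$ times quadratic quantities using the a priori bound \eqref{4.2.1} and Sobolev embedding, exactly as in Lemmas \ref{lem4.2}–\ref{lem4.3}. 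The self-consistent force enters only through terms like $\lambda^2(\cdots)\tilde v_y$ built out of $\tilde\phi_y,\tilde\phi_{yy}$ and $\Phi_y$; these are placed into the right-hand side as $\|\tilde\phi_y\|_{L^2}^2$ (and, if $\tilde\phi_{yy}$ appears, reabsorbed using \eqref{4.2.21-1} to convert it into $\|\Phi_y\|_{L^2}^2+\text{small}\cdot\|\Phi_{yy}\|_{L^2}^2$, which is consistent with the $Ce_1\int_0^t\|\tilde u_{yy}\|_{L^2}^2$ slot since $\tilde u_{yy}=\Psi_{yyy}$... actually more directly $\Phi_{yy}=\tilde v_y$ is already the quantity being estimated, so such a term must be absorbed to the left with a small constant).

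The main obstacle I anticipate is precisely the viscous term bookkeeping: because the physical viscosity is $\mu(v^{-1}u_y)_y$ rather than $\mu u_{yy}$, the commutator between $v^{-1}$ and differentiation generates terms like $\mu\tilde v\,\bar u_{yy}$, $\mu\bar v_y\tilde u_y/v^2$, and $\mu\tilde v_y\tilde u_y/v^2$ that must be carefully sorted into "profile-small" ($O(|v_+-v_-|)$) versus "perturbation-small" ($O(\sqrt{\CE})$) buckets, and one must make sure the coefficient of $\tilde v_y$ that survives multiplication and time-integration is genuinely of the claimed form and not secretly producing an uncontrolled $\|\tilde u_{yy}\|_{L^2}^2$ with an $O(1)$ constant — this is why the statement tolerates $Ce_1\int_0^t\|\tilde u_{yy}\|_{L^2}^2\,\dd\tau$ but nothing worse. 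A secondary technical point is that the boundary term $\frac{\dd}{\dd t}\int(\mu v^{-1}\tilde v_y^2/2 - \tilde u\tilde v_y)\,\dd y$ must be shown to be bounded below (up to the small pieces it generates) by $c\|\tilde v_y(t)\|_{L^2}^2-C\|\tilde u(t)\|_{L^2}^2$, which is a routine completion of squares but needs the lower bound $v\ge\ubar V$ from \eqref{4.2.7-1}. Once all error terms are collected and $e_1,\tilde\vep_1$ are chosen small enough to absorb the small-coefficient $\|\tilde v_y\|_{L^2}^2$ contributions to the left, integrating in $t$ over $[0,M]$ yields \eqref{4.2.32}.
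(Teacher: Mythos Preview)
Your proposal is correct and follows essentially the same approach as the paper. The only organizational difference is that the paper multiplies the first equation of \eqref{4.1.1} by $-\mu\tilde v_{yy}$ and the second by $-v\tilde v_y$ and adds (so the term $(\mu\tilde u_y,\tilde v_{yy})$ from the first equation cancels the leading $(\mu\tilde u_{yy},\tilde v_y)$ from the viscosity in the second), whereas you substitute $\tilde u_y=\tilde v_t-s\tilde v_y$ into the viscosity first and then test against $\tilde v_y$; these two bookkeepings are equivalent and produce the same energy functional $\tfrac{\mu}{2}\|\tilde v_y\|_{L^2}^2-( \tilde u, v\tilde v_y)$ and the same pressure-driven dissipation. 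Two small remarks: the coercive coefficient here is $T$ (from $\CI_{13}=(Tv^{-1}\tilde v_y,\tilde v_y)+\cdots$), not $T+1$, so your parenthetical ``more precisely $T+1>0$'' should be dropped; and since the paper works from \eqref{4.1.1} rather than \eqref{4.1.2}, the force term is simply $-(\phi_y/v-\bar\phi_y/\bar v)$ and contributes only $\|\tilde\phi_y\|_{L^2}^2+\|\tilde v\|_{L^2}^2$ to the right-hand side (your worry about $\tilde\phi_{yy}$ is unnecessary).
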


Finally, to close the a priori assumption \eqref{4.2.1}, we need to estimate the time derivative $\tilde{\phi}_t$. In fact, we have the following

\begin{lemma}\label{lem4.6}
Under the assumptions of Proposition \ref{prop4.1}, it holds that
\begin{equation}\label{4.2.35}
\|\tilde{\phi}_t\|_{H^1}^2\leq C 
\{\|\tilde{u}_y\|_{L^2}^2+\|\tilde{v}_{y}\|_{L^2}^2\},
\end{equation}
for all $t\in[0,M]$.
\end{lemma}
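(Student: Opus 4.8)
The plan is to read off the time derivative $\tilde\phi_t$ directly from the Poisson equation, differentiated in $t$, and then estimate it by an elliptic-type energy identity in which the source terms are controlled by $\|\tilde u_y\|_{L^2}$ and $\|\tilde v_y\|_{L^2}$. First I would differentiate the third equation of \eqref{4.2.4} with respect to $t$; since $\tilde v_t=\Phi_{yt}$ and, by the first equation of \eqref{4.1.3}, $\Phi_{yt}=s\Phi_{yy}+\Psi_{yy}=s\tilde v_y+\tilde u_y$, every appearance of a time derivative of $v$ (or of $\tilde v$) can be rewritten purely in terms of the spatial derivatives $\tilde v_y$ and $\tilde u_y$ (together with already-controlled lower-order quantities $\tilde v,\tilde u,\tilde\phi$ and the smooth profile $\bar v,\bar\phi$, whose derivatives are $O(|v_+-v_-|)$ by \eqref{4.4}). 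This yields an equation of the schematic form $-\lambda^2(\bar v^{-1}\tilde\phi_{ty})_y+(\text{coefficient})\,\tilde\phi_t=F$, where $F$ is linear in $[\tilde v_y,\tilde u_y]$ with bounded coefficients plus quadratic remainders that are small under the a priori assumption \eqref{4.2.1}–\eqref{4.2.2}.

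Next I would test this equation against $\tilde\phi_t$ and integrate over $\mathbb{R}$. Integration by parts on the principal term produces $\int \lambda^2\bar v^{-1}\tilde\phi_{ty}^2\,\dd y$ plus a harmless term $\int \lambda^2(\bar v^{-1})_y\tilde\phi_y\tilde\phi_{ty}\,\dd y$ which, using $|(\bar v^{-1})_y|\le C|\bar v_y|\le C|v_+-v_-|$ and Cauchy–Schwarz, is absorbed. The zeroth-order term contributes a coercive piece $\int c\,e^{\bar\phi}\bar v\,\tilde\phi_t^2\,\dd y$ (the coefficient of $\tilde\phi$ in the $t$-differentiated Poisson equation, which is bounded below by a positive constant thanks to \eqref{4.2.7-1}), modulo small corrections. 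The right-hand side $\int F\tilde\phi_t\,\dd y$ is estimated by Cauchy–Schwarz: the linear-in-$[\tilde v_y,\tilde u_y]$ part gives $\eta\|\tilde\phi_t\|_{L^2}^2+C_\eta(\|\tilde v_y\|_{L^2}^2+\|\tilde u_y\|_{L^2}^2)$, and the quadratic/profile remainders give $C(\sqrt{\CE(t)}+|v_+-v_-|)$ times the full $H^1$ norm of $[\tilde\phi_t,\tilde v,\tilde u,\dots]$, which is harmless after choosing $e_1,\tilde\vep_1$ small (and recalling that $\tilde\phi,\tilde\phi_y$ are already dominated by $\Phi_y$ via \eqref{4.2.21-1}, hence by $\tilde v_y$). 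Choosing $\eta$ small then yields $\|\tilde\phi_t\|_{L^2}^2+\|\tilde\phi_{ty}\|_{L^2}^2\le C(\|\tilde u_y\|_{L^2}^2+\|\tilde v_y\|_{L^2}^2)$, which is \eqref{4.2.35}.

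The only delicate point is the bookkeeping of the $t$-derivative of the nonlinear Poisson equation: one must be careful that no uncontrolled term such as $\tilde v_{yy}$ or $\tilde u_{yy}$ survives after substituting $\Phi_{yt}=s\tilde v_y+\tilde u_y$, and that all remaining time derivatives are eliminated. A clean way to organize this is to write the Poisson equation in the conservative form $-\lambda^2(v^{-1}\phi_y)_y=1-ve^{\phi}$ (and likewise for the bar quantities), differentiate in $t$ to get $-\lambda^2(v^{-1}\phi_{ty}-v^{-2}v_t\phi_y)_y=-v_te^{\phi}-ve^{\phi}\phi_t$, then subtract the corresponding stationary identity for the profile; every $v_t=\tilde v_t=\Phi_{yt}$ is then replaced by $s\tilde v_y+\tilde u_y$, after which the structure above is manifest. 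This is essentially the same Poisson-equation manipulation already used in \eqref{4.2.15}–\eqref{4.2.22-3}, so no new idea is required beyond careful algebra, and I expect no serious obstacle.
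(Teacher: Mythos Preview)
Your proposal is correct and follows essentially the same route as the paper: differentiate the Poisson equation in $t$, pair with $\tilde\phi_t$, integrate by parts to extract the coercive terms $(ve^{\phi}\tilde\phi_t,\tilde\phi_t)+(\lambda^2 v^{-1}\tilde\phi_{ty},\tilde\phi_{ty})$, replace $\tilde v_t$ by $s\tilde v_y+\tilde u_y$ via the mass equation, and close with Cauchy--Schwarz. The only cosmetic difference is that the paper works directly with the full nonlinear Poisson equation in \eqref{4.1.1} (coefficients $v,\,e^{\phi}$) rather than the linearized form \eqref{4.2.4}, which slightly shortens the bookkeeping you flag at the end.
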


\begin{proof}
Differentiate the third equation of \eqref{4.1.1} with respect to $t$ and taking the inner product of the resultant equation with $\tilde{\phi}_t$, one has
\begin{multline}\label{4.2.36}
(ve^{\phi}\tilde{\phi}_t,\tilde{\phi}_t)+(\lambda^2v^{-1}\tilde{\phi}_{ty},\tilde{\phi}_{ty})=
\left(\pa_y(-\lambda^2v^{-2}\phi_y\tilde{v}_t),\tilde{\phi}_t\right)+(-e^{\phi}\tilde{v}_t,\tilde{\phi}_t)\\
=\left(\lambda^2v^{-2}\phi_y(s\tilde{v}_y+\tilde{u}_y),\tilde{\phi}_{ty}\right)
+\left(-e^{\phi}(s\tilde{v}_y+\tilde{u}_y),\tilde{\phi}_t\right), \end{multline}
where the first equation of \eqref{4.1.1} has been used for obtaining the second equality. By Cauchy-Schwarz, it is direct to bound the right-hand side of \eqref{4.2.36} by
$$
\{\eta+C\sqrt{\CE(t)}\}\|\tilde{\phi}_t\|_{H^1}^2+\{C_\eta+C|v_+-v_-|+C\sqrt{\CE(t)}\}\{\|\tilde{u}_y\|_{L^2}^2+\|\tilde{v}_y\|_{L^2}^2\}.
$$
Recall \eqref{4.2.1} and \eqref{4.2.2}. Thus, \eqref{4.2.35} follows by taking $\eta>0$ suitably small and also  letting $
e_1$ and $\tilde{\vep}_1$ be small enough. The proof of Lemma \ref{lem4.6} is then complete.
\end{proof}


\noindent{\it Proof of Proposition \ref{prop4.1}}: Letting positive constants 
{$e_1$ and $\tilde{\vep}_1$} be small enough, a suitable linear  combination of all estimates \eqref{4.2.8}, \eqref{4.2.21}, \eqref{4.2.26}, \eqref{4.2.27}, \eqref{4.2.32} and \eqref{4.2.35} yields that
\begin{equation}\label{4.2.37}
\|[\Phi,\Psi](t)\|_{H^2}^2+\|\tilde{\phi}(t)\|_{H^1}^2+\int_0^t\CD(s)\dd s
\leq
C\|[\Phi_0,\Psi_0]\|_{H^2}^2+C\|\tilde{\phi}(0,\cdot)\|_{H^1}^2,
\end{equation}
where $\CD(s)$ is defined in \eqref{D}. Since $e_1$ and $\tilde{\vep}_1$ can be further small enough, by \eqref{4.2.21-1} one has
\begin{equation}\label{4.2.38}
\|\tilde{\phi}(t)\|_{H^2}^2\leq C\|\Phi_y(t)\|_{{H^1}}^2,
\end{equation}
for all $t\in [0,M]$. Then, from  \eqref{4.2.37} together with \eqref{4.2.38}, one has
\begin{equation*}
\|[\Phi,\Psi,\tilde{\phi}](t)\|_{H^2}^2+\int_0^t\CD(s)\dd s
\leq C\|[\phi_0,\Psi_0]\|_{H^2}^2,
\end{equation*}
which proves \eqref{4.2.3}.
Therefore, the proof of Proposition \ref{prop4.1} is  complete. \qed

\subsection{Global existence and large time behavior}
This part is devoted to proving Theorem \ref{thm1.3}. First, the local-in-time existence and uniqueness of solutions $[\Phi,\Psi,\tilde{\phi}]$ to the Cauchy problem on the system \eqref{4.1.3} with initial data $[\Phi_0,\Psi_0]$ can be obtained in a usual way;
we omit the details by brevity. Furthermore, by a  continuity argument, the global existence of the solution  $[\Phi,\Psi,\tilde{\phi}]$
follows from the uniform a priori estimates obtained in Proposition \ref{prop4.1}.  
As a consequence, the solution to \eqref{4.1.1} with the corresponding initial data is given by $[\tilde{v},\tilde{u},\tilde{\phi}]=[\Phi_y,\Psi_y,\tilde{\phi}]$. As mentioned before, we also omit the proof of uniqueness for brevity.
Therefore, it remains to
show the large time behaviour \eqref{LT}. To do this,
we see from \eqref{4.1.1} as well as \eqref{thm1.3.est} that
\begin{multline*}
\int_0^{\infty}\bigg|\f{\dd }{\dd t}\|[\tilde{v},\tilde{u},\tilde{\phi}](t)\|_{L^2}^2\bigg|\dd t
\leq 2\int_0^{\infty}\{|(\tilde{v},\tilde{v}_t)|+|(\tilde{u},\tilde{u}_t)|+|(\tilde{\phi},\tilde{\phi}_t)|\}\dd t\\
\leq C\int_0^{\infty}\CD(t)\dd t\leq CE_0.
\end{multline*}
Moreover, since it also holds that
$$
\int_0^t\|[\tilde{v},\tilde{u},\tilde{\phi}](t)\|_{L^2}^2\dd t\leq C\int_0^{\infty}\CD(t)\dd t\leq CE_0,
$$
one can see that $\|[\tilde{v},\tilde{u},\tilde{\phi}](t)\|_{L^2}$ tends to zero as $t\to \infty$. Hence
by Sobolev inequality, one has
$$
\|[\tilde{v},\tilde{u},\tilde{\phi}](t)\|_{L^\infty}\leq \sqrt{2}\|[\tilde{v},\tilde{u},\tilde{\phi}](t)\|_{L^2}^{1/2}
\|[\tilde{v}_y,\tilde{u}_y,\tilde{\phi}_y](t)\|_{L^2}^{1/2}\leq CE_0^{1/4}\|[\tilde{v},\tilde{u},\tilde{\phi}](t)\|_{L^2}^{1/2},
$$
which goes to zero as $t\rightarrow \infty$. This proves \eqref{LT}.  Therefore, the proof of Theorem \ref{thm1.3} is complete.\qed


\section{Appendix}

\subsection{KdV-Burgers shock profile}
Concerning the shock profiles for the KdV-Burgers equation, we have the following result, cf.~\cite{BS}.

\begin{lemma}\label{lem3.1.0}
Let $0<\alpha<2$. Then if $\delta>0$ is sufficiently small, the equations \eqref{3.1.8-1} with \eqref{3.1.8-1.1}, \eqref{3.1.8-2} with \eqref{3.1.8-2.1}, and \eqref{3.1.8-3} with \eqref{3.1.8-3.1} have the smooth solutions $n_1$, $u_1$ and $\phi_1$, respectively, which are unique up to a spatial shift and satisfy the following properties:
\begin{equation}
n_1', u_1', \phi_1'<0, \label{3.1.9-0}
\end{equation}
and
\begin{equation}
\left|\f{\dd^k}{\dd z^k}\big[n_1-n_{1,\pm},u_1-u_{1,\pm},\phi_1-\phi_{1,\pm}\big]\right|\leq C_ke^{-\alpha|z|},\quad z\lessgtr0,\label{3.1.9-1}
\end{equation}
{for any integer $k\geq 0$,} where each positive constant $C_k$  is independent of $\delta.$
\end{lemma}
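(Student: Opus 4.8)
The plan is to read each of \eqref{3.1.8-1}, \eqref{3.1.8-2}, \eqref{3.1.8-3} as a travelling-wave equation of a KdV--Burgers equation and reduce the statement to a two-dimensional phase-plane analysis, as in \cite{BS}. The three equations are of the same form up to a rescaling of $z$ and of the amplitude (recorded in \eqref{3.1.6}), so it suffices to treat \eqref{3.1.8-1} with \eqref{3.1.8-1.1}. Every term of \eqref{3.1.8-1} is an exact $z$-derivative; integrating once from $z=-\infty$ (using $n_1,n_1',n_1''\to 0$ there) gives
\begin{equation*}
\delta n_1''=\sqrt{T+1}\,n_1'-Q(n_1),\qquad Q(n_1):=n_1\big(2\sqrt{T+1}+(T+1)n_1\big),
\end{equation*}
and the constant of integration is automatically consistent with the downstream data because $Q(n_{1,+})=0$. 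With $p=n_1'$ this is the planar system $n_1'=p$, $\delta p'=\sqrt{T+1}\,p-Q(n_1)$, whose only equilibria are $(0,0)$ and $(n_{1,+},0)=(-2/\sqrt{T+1},0)$; $Q<0$ strictly on the open segment between them, and the profile we want is a heteroclinic orbit from $(0,0)$ to $(n_{1,+},0)$ lying in $\{p<0\}$.

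I would then linearise. At $(0,0)$ the eigenvalues solve $\delta\mu^2-\sqrt{T+1}\,\mu+2\sqrt{T+1}=0$, whose discriminant $(T+1)-8\sqrt{T+1}\,\delta$ is positive exactly for $\delta$ small, giving two real roots $0<\mu_-<\mu_+$ with $\mu_-=2+O(\delta)>2$; thus $(0,0)$ is an unstable node. This is precisely where the smallness of $\delta=\bar{\lambda}^2/\bar{\mu}^2$ enters: it replaces an unstable spiral (and its oscillatory tail) by a node, forcing monotonicity. At $(n_{1,+},0)$ the eigenvalues solve $\delta\mu^2-\sqrt{T+1}\,\mu-2\sqrt{T+1}=0$ with negative product, so this point is always a saddle, with stable eigenvalue $-2+O(\delta)$ of modulus $2-O(\delta)$, still larger than $\alpha$ once $\delta$ is small depending on $\alpha$ — this is where $\alpha<2$ is used.

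For the connection I would use geometric singular perturbation theory: the curve $\mathcal{C}:=\{p=Q(n_1)/\sqrt{T+1}\}$ is normally hyperbolic (normally repelling, with rate $\sqrt{T+1}/\delta$) over compact sub-segments of $(n_{1,+},0)$ and carries a Fenichel slow manifold $O(\delta)$-close to it, on which the reduced flow is the viscous Burgers profile equation $n_1'=Q(n_1)/\sqrt{T+1}$; the latter has a unique-up-to-shift strictly decreasing heteroclinic from $0$ to $n_{1,+}$, decaying at endpoint rate exactly $2$, which for small $\delta$ perturbs to the desired heteroclinic of the full system. That this orbit stays in $\{p<0\}$, i.e.\ $n_1'<0$ and hence \eqref{3.1.9-0}, follows by combining $C^1$ closeness to $\mathcal{C}$ away from the endpoints with the local pictures there: near $(0,0)$ it leaves along the $\mu_-$-eigendirection into $\{n_1<0,\,p<0\}$, and near $(n_{1,+},0)$ it enters along the saddle's stable eigendirection from $\{n_1>n_{1,+},\,p<0\}$. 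Uniqueness up to a shift holds because the node's weak unstable manifold and the saddle's stable manifold are each one-dimensional; the elementary version of this whole step is the analysis of \cite{BS}.

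Finally, for \eqref{3.1.9-1}: the (un)stable manifold theorems at the two equilibria give decay of $n_1-n_{1,\pm}$ to zero at rate $\geq\mu_->2>\alpha$ as $z\to-\infty$ and at rate $2-O(\delta)>\alpha$ as $z\to+\infty$, hence $|n_1-n_{1,\pm}|\leq C e^{-\alpha|z|}$ for $z\lessgtr 0$; the bounds for $n_1^{(k)}$ follow by induction, differentiating the reduced ODE $k$ times to express $n_1^{(k+2)}$ through already-controlled lower derivatives. Pulling back through \eqref{3.1.6} gives $u_1$ and $\phi_1$ with the same $\alpha$ and with $\delta$-independent constants $C_k$. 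The main obstacle is the global step above — showing the selected branch of the node's unstable manifold actually reaches the saddle, monotonically, uniformly for all small $\delta$ — together with arranging that the various $\delta$-thresholds (non-oscillation, confinement to $\{p<0\}$, and the decay rate $2-O(\delta)>\alpha$ at $+\infty$, only just below $2$) can be chosen uniformly over the fixed range $0<\alpha<2$.
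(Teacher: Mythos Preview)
Your approach is essentially the paper's: integrate \eqref{3.1.8-1} once to reduce to a planar system, compute the eigenvalues at the two equilibria to find decay rates $2\pm O(\delta)$, and conclude the $e^{-\alpha|z|}$ bound for any fixed $\alpha<2$ once $\delta$ is small enough. Two minor differences worth noting: (i) for existence, uniqueness and monotonicity \eqref{3.1.9-0} the paper simply invokes \cite{BS}, whereas you sketch a geometric singular perturbation (Fenichel) argument --- yours is more self-contained, but the conclusion and the mechanism (node at $-\infty$ when $\delta$ is small, saddle at $+\infty$) are identical; (ii) for the higher-derivative bounds in \eqref{3.1.9-1} you differentiate the integrated ODE and induct pointwise, while the paper instead takes weighted inner products of the integrated equation with $w_\alpha^2 n_1'$ to obtain $\|n_1'\|_{L^2_\alpha}\leq C\|n_1(n_1-n_{1,+})\|_{L^2_\alpha}$ and iterates in $L^2_\alpha$. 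Both routes work; your pointwise induction is arguably more direct for the pointwise statement \eqref{3.1.9-1}, while the paper's energy argument is what is reused later (e.g.\ Lemma \ref{lm3.1.1}).
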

\begin{proof}
The existence and uniqueness of the smooth shock profile with properties \eqref{3.1.9-0} have been proved in \cite{BS}. We only show \eqref{3.1.9-1}. Integrating \eqref{3.1.8-1} from $-\infty$ to $z$, we have
\begin{align}\label{k1}
2\sqrt{T+1}n_1+(T+1)n_1^2-\sqrt{T+1}n_1'+\delta n_{1}''=0.
\end{align}
Let $q=n_1'$. Then, \eqref{k1} is equivalent to the following 1st-order ODE system for $[n_1,q]$:
\begin{equation*}
\left\{\begin{aligned}
n_1'&=q,\\
q'&=\delta^{-1}\left\{\sqrt{T+1}q-(T+1)n_1^2-2\sqrt{T+1}n_1\right\}.
\end{aligned}\right.
\end{equation*}
The Jacobian at the far fields $(0,0)$ and $(-\f{2}{\sqrt{T+1}},0)$ can be directly computed as
$$J_{\pm}=\left(
 \begin{array}{cc}
  0\quad & 1 \\
    \pm2\delta^{-1}\sqrt{T+1}\quad & \delta^{-1}\sqrt{T+1} \\
    \end{array}
 \right).$$
The eigenvalues are given by
\begin{eqnarray*}
\lambda_{\pm,1}&=&\f{\sqrt{T+1}-\sqrt{T+1\pm8\sqrt{T+1}\delta}}{2\delta},\\
\lambda_{\pm,2}&=&\f{\sqrt{T+1}+\sqrt{T+1\pm8\sqrt{T+1}\delta}}{2\delta},
\end{eqnarray*}
with
$$
\la_{-,1}>0,\ \la_{-,2}>0,\ \la_{+,1}<0,\ \la_{+,2}>0.
$$
Hence we have
$$
\begin{aligned}
\lim_{z\rightarrow-\infty}\f{q}{n_1}&=\lambda_{-,1}=\f{4\sqrt{T+1}}{\sqrt{T+1}+\sqrt{T+1-8\sqrt{T+1}\delta}}=2+O(\delta),\\
\lim_{z\rightarrow+\infty}\f{q}{n_1+\f{2}{\sqrt{T+1}}}&=\lambda_{+,1}=\f{-4\sqrt{T+1}}{\sqrt{T+1}+\sqrt{T+1-8\sqrt{T+1}\delta}}=-2+O(\delta).
\end{aligned}
$$
This implies that, for any $0<\alpha<2$,
\begin{align}\label{k3}
|n_1-n_{1,\pm}|\leq Ce^{-\alpha|z|},\quad z\gtrless0,
\end{align}
provided that $\delta>0$ is suitably small. Next, to estimate the  derivatives of $n_1$. Taking the inner product of \eqref{k1} with $w_{\alpha}^2n_1'$ gives
\begin{align}
\sqrt{T+1}\|n_1'\|_{L^2_{\alpha}}^2=(\delta n_1'',w_{\alpha}^2n_1')+(T+1)\left(n_1(n_1+\f{2}{\sqrt{T+1}}),w_{\alpha}^2n_1'\right)\nonumber.
\end{align}
From integration by parts, the first inner product term is equal to
$$
-\delta(n_1',w_{\alpha}w_{\alpha}'n_1')\leq C\delta\|n_{1}'\|_{L^2_{\alpha}}^2.
$$
By Cauchy-Schwarz, the second one is bounded by
$$
\eta\|n_1'\|_{L^2_{\alpha}}^2+C_{\eta}\left\|n_1(n_1+\f{2}{\sqrt{T+1}})\right\|_{L^2_{\alpha}}^2,
$$
for $\eta>0$.
Therefore, by taking both $\eta>0$ and $\delta>0$ suitably small, we have
$$
\|n_1'\|_{L^2_{\alpha}}\leq C\left\|n_1(n_1+\f{2}{\sqrt{T+1}})\right\|_{L^2_{\alpha}}\leq C.
$$
Here we have used the exponential decay property \eqref{k3} in the last inequality. The higher-order derivatives can be treated similarly. The proof of Lemma \ref{lem3.1.0} is complete.
\end{proof}

\subsection{Error estimates}
The following result gives the estimates on errors between the first-order approximation $[n_1,u_1,\phi_1]$ and the modified one $[n_{1,\vep},u_{1,\vep},\phi_{1,\vep}]$ defined in \eqref{3.1.9}. 
{It can be shown by the same energy method as the one used for proving Lemma \ref{lem3.1.0}. So the proof is omitted for brevity.}

\begin{lemma}\label{lm3.1.1}
Let $0<\alpha<2$. Assume that both $\vep>0$ and $\delta>0$ are suitably small. {For any integer $k\geq 0$, there exists a constant}
$C_{k,\alpha}>0$ independent of $\delta$ and $\vep$ such that
\begin{align*}
\bigg|\f{\dd^k}{\dd z^k}\big[n_{1,\vep}-n_{1,\vep}(\pm\infty)-(n_1-n_{1,\pm})\big]\bigg|\leq C_{k,\alpha}\vep e^{-\alpha|z|},\\
\bigg|\f{\dd^k}{\dd z^k}\big[u_{1,\vep}-u_{1,\vep}(\pm\infty)-(u_1-u_{1,\pm})\big]\bigg|\leq C_{k,\alpha}\vep e^{-\alpha|z|},\\
\bigg|\f{\dd^k}{\dd z^k}\big[\phi_{1,\vep}-\phi_{1,\vep}(\pm\infty)-(\phi_1-\phi_{1,\pm})\big]\bigg|\leq C_{k,\alpha}\vep e^{-\alpha |z|},
\end{align*}
for $z\lessgtr0$. Moreover, let
$[n_2,u_2,\phi_2]:=\vep^{-1}[n_{1,\vep}-n_1,u_{1,\vep}-u_1,\phi_{1,\vep}-\phi_1]$,
then it holds that
$$
\left|\f{\dd^k}{\dd z^k}[n_2,u_2,\phi_2](z)\right|\leq C_{k,\alpha},\quad z\in \R.
$$
\end{lemma}


\subsection{Explicit formulas of $r_2$ and $r_3$}
For completeness, we write down the explicit formulas of $r_2$ and $r_3$ as
\begin{align}\label{5.1}
r_2=&\f{\vep n_R}{\sqrt{T+1}}\bigg\{2(T+1)\vep^{-1}(n_{1,\vep}-n_1)-1+2(2\sqrt{T+1}-\vep)n_{1,\vep}\nonumber\\
&+3(T+1)n_{1,\vep}^2+2\delta(1+\vep n_{1,\vep})\phi_{1,\vep}''-\vep\delta(1+\vep n_{1,\vep})(\phi_{1,\vep}')^2\bigg\}\nonumber\\
&+\f{\vep \delta(2n_{1,\vep}+\vep n_{1,\vep}^2)}{\sqrt{T+1}}\phi_{R}''+\f{\vep n_R'}{\sqrt{T+1}}-\f{\vep\delta\phi_{1,\vep}'(1+\vep n_{1,\vep})^2}{\sqrt{T+1}}\phi_{R}',
\end{align}
and
\begin{align}\label{5.2}
r_3=&\f{\vep[(T+1)(2+3\vep n_{1,\vep})-s^2+\vep^2\delta\phi_{1,\vep}''-\f{\delta}{2}\vep^3(\phi_{1,\vep}')^2]n_{R}^2}{\sqrt{T+1}}\nonumber\\
&+(T+1)^{3/2}\vep^3n_{R}^3+\f{2\vep^2\delta(1+\vep n_{1,\vep})n_R\phi_{R}''}{\sqrt{T+1}}+\f{\vep^4\delta n_{R}^2\phi_{R}''}{\sqrt{T+1}}\nonumber\\
&-\f{\vep^3\delta\phi_{1,\vep}'\phi_{R}'[2n_{R}(1+\vep n_{1,\vep})+
 {\vep^2}n_{R}^2]}{\sqrt{T+1}}-\f{\vep^2{\delta}(\phi_{R}')^2(1+\vep n_{1,\vep}+\vep^2 n_R)^2}{2\sqrt{T+1}},
\end{align}
respectively.

\subsection{Higher order energy estimates}\label{sec.a4}
We give the detailed proof of Lemma \ref{lem4.4} and Lemma \ref{lem4.5} as follows.


\medskip
\noindent{\it Proof of Lemma \ref{lem4.4}:}
Taking the inner product of the second equation of \eqref{4.1.1} with $\tilde{u}$ with respect to $y$ over $\mathbb{R}$, one has
\begin{equation}\label{4.2.28}
(\tilde{u}_t-s\tilde{u}_y,\tilde{u})+\left(T\left(\f1v-\f1{\b{v}}\right)_y-
\mu\left(\f{u_y}{v}-\f{\b{u}_y}{v}\right)_y,\tilde{u}\right)-
\left(\f{\phi_y}{v}-\f{\b{\phi}_y}{\b{v}},\tilde{u}\right)=0.
\end{equation}
We estimate the left-hand inner products term by term. The first term is equal to $\f12\f{\dd}{\dd t}\|\tilde{u}(t)\|_{L^2}^2$. From integration by parts, the second term is computed as
\begin{multline}
\left(T\left(\f1v-\f1{\b{v}}\right)_y-
\mu\left(\f{u_y}{v}-\f{\b{u}_y}{v}\right)_y,\tilde{u}\right)=\left(-T\left(\f1v-\f1{\b{v}}\right)+
\mu\left(\f{u_y}{v}-\f{\b{u}_y}{v}\right),\tilde{u}_y\right)\nonumber\\
=\left(\mu v^{-1} \tilde{u}_y,\tilde{u}_y\right)+\left((-T+\mu\b{u}_y)\left(\f1v-\f1{\b{v}}\right),\tilde{u}_y\right),\nonumber
\end{multline}
where the first term in the last line above is a good one and the second term is bounded by $\eta\|\tilde{u}_y\|_{L^2}^2+C_{\eta}\|\Phi_y\|_{L^2}^2$ with an arbitrary constant $0<\eta<1$. The third term on the left-hand side of \eqref{4.2.28} is bounded by $C\{\|\Phi_y\|_{L^2}^2+\|\tilde{u}\|_{L^2}^2+{\|\tilde{\phi}_y\|_{L^2}^2}\}$. Plugging these estimates back into \eqref{4.2.28} and letting $0<\eta<1$ be suitably small, one has
\begin{align}\label{4.2.29}
\frac{1}{2}\f{\dd}{\dd t}\|\tilde{u}(t)\|_{L^2}^2+c\|\tilde{u}_y\|_{L^2}^2\leq C\left(\|\Phi_y\|_{L^2}^2+\|\tilde{u}\|_{L^2}^2+\|\tilde{\phi}_y\|_{L^2}^2\right).
\end{align}
Then \eqref{4.2.26} follows from integrating \eqref{4.2.29} over $[0,t]$.

Next, we show \eqref{4.2.27}. Taking the inner product of the second equation of \eqref{4.1.1} with $-\tilde{u}_{yy}$ with respect to $y$ over $\mathbb{R}$ gives that
\begin{multline}\label{4.2.30}
\f12\f{\dd}{\dd t}\|\tilde{u}_y\|_{L^2}^2+\underbrace{\left(T\left(\f1v-\f1{\b{v}}\right)_y,-\tilde{u}_{yy}\right)}_{\CI_9}
+\underbrace{\left(\mu\left(\f{u_y}{v}-\f{\b{u}_y}{\b{v}}\right)_y,\tilde{u}_{yy}\right)}_{\CI_{10}}\\
+\underbrace{\left(\f{\phi_y}{v}-\f{\b{\phi}_y}{\b{v}},\tilde{u}_{yy}\right)=0}_{\CI_{11}}.
\end{multline}
The inner product terms $\CI_9$, $\CI_{10}$ and $\CI_{11}$ above are computed as follows. By Cauchy-Schwarz, $\CI_9$ and $\CI_{11}$ can be bounded respectively as
$$
|\CI_9|=\left|T\left(\f{-v_y}{v^2}+\f{\b{v}_y}{\b{v}^2},-\tilde{u}_{yy}\right)\right|\leq
\eta\|\tilde{u}_{yy}\|_{L^2}^2+C_{\eta}(\|\tilde{v}\|_{L^2}^2+\|\tilde{v}_y\|_{L^2}^2),
$$
and
$$
|\CI_{11}|\leq \eta\|\tilde{u}_{yy}\|_{L^2}^2+C_{\eta}(\|\Phi_y\|_{L^2}^2+\|\tilde{\phi}_y\|_{L^2}^2),
$$
with an arbitrary constant $0<\eta<1$. As to $\CI_{10}$, we rewrite it as 
\begin{align}
\CI_{10}&=\left(\mu v^{-1}\tilde{u}_{yy},\tilde{u}_{yy}\right)
{-}
\left({\mu}v^{-2}\tilde{v}_y\tilde{u}_y,\tilde{u}_{yy}\right)+\left({\mu}\b{u}_{yy}\left(\f1v-\f1{\b{v}}\right),\tilde{u}_{yy}\right)\nonumber\\
&\quad
+\left({\mu}\b{u}_y\b{v}_y\left(\f{1}{\b{v}^2}-\f1{v^2}\right),\tilde{u}_{yy}\right)-
\left({\mu}\f{\b{v}_y\tilde{u}_y+\b{u}_y\tilde{v}_y}{v^2},\tilde{u}_{yy}\right).\label{ad.i10}
\end{align}
On the right-hand side of \eqref{ad.i10}, the first term is a good one, and the second term is bounded as
$$
\begin{aligned}
\left|\left(v^{-2}\tilde{v}_y\tilde{u}_y,\tilde{u}_{yy}\right)\right|&\leq C\|\tilde{u}_{y}\|_{L^{\infty}}\|\tilde{v}_{y}\|_{L^2}\|\tilde{u}_{yy}\|_{L^2}\\
&\leq
C\|\tilde{u}_y\|_{L^2}^{\f12}\|\tilde{v}_y\|_{L^2}^{\f14}
\|\tilde{u}_{yy}\|_{L^2}^{\f32}\|\tilde{v}_y\|^{\f34}_{L^2}\\
&\leq C 
\|\tilde{v}_y\|_{L^2}\|\tilde{u}_y\|_{L^2}^2+ C 
\|\tilde{v}_y\|_{L^2}
\|\tilde{u}_{yy}\|_{L^2}^2\\
&\leq C\sqrt{\CE(t)}\left(\|\tilde{u}_y\|_{L^2}^2+\|\tilde{u}_{yy}\|_{L^2}^2\right),
\end{aligned}
$$
where we have used the Sobolev inequality
in the second line and Young's inequality
in the third line. Also, the last three terms on the right-hand side of \eqref{ad.i10} are bounded by
$$
C|v_+-v_-|\left\{\|\tilde{u}_{y}\|_{L^2}^2+\|\tilde{u}_{yy}\|_{L^2}^2+\|\tilde{v}\|_{L^2}^2{+\|\tilde{v}_y\|_{L^2}^2}\right\}.
$$
Plugging those estimates on $\CI_9$ to $\CI_{11}$ back into \eqref{4.2.30} and taking $\eta>0$ suitably small, one has
\begin{equation}\label{4.2.31}
\frac{1}{2}\f{\dd}{\dd t}\|\tilde{u}_y\|_{L^2}^2+c\|\tilde{u}_{yy}\|_{L^2}^2
\leq C\|[\tilde{v},\tilde{v}_y,\tilde{u}_y,\tilde{\phi}_y]\|_{L^2}^2
+C\{|v_+-v_-|+\sqrt{\CE(t)}\}\|\tilde{u}_{yy}\|_{L^2}^2.
\end{equation}
Recall \eqref{4.2.1} and \eqref{4.2.2}. Then \eqref{4.2.27} follows from integrating \eqref{4.2.31} over $[0,t]$ and  letting $e_1$ and $\tilde{\vep}_1$ be small enough. The proof of Lemma \ref{lem4.4} is complete.\qed

\medskip
\noindent{\it Proof of Lemma \ref{lem4.5}:}
By taking the inner products of the first and second equations of \eqref{4.1.1}
with $-\mu\tilde{v}_{yy}$
and $-v\tilde{v}_y$ respectively and adding the resultant equations together, we obtain that
\begin{multline}\label{4.2.33}
\f{\mu}{2}\f{\dd}{\dd t}\|\tilde{v}_y\|_{L^2}^2+\underbrace{(\tilde{u}_t-s\tilde{u}_y,-v\tilde{v}_y)}_{\CI_{12}}
+\underbrace{\left(T\left(\f1v-\f1{\b{v}}\right)_y,-v\tilde{v}_y\right)}_{\CI_{13}} \\
+\underbrace{(\mu \tilde{u}_y,\tilde{v}_{yy})+\left(\left(\f{\mu u_y}{v}-\f{\mu \b{u}_y}{\b{v}}\right)_y,v\tilde{v}_y\right)}_{\CI_{14}}{-}
\underbrace{\left(\f{\phi_y}{v}-\f{\b{\phi}_y}{\b{v}},v\tilde{v}_y\right)}_{\CI_{15}}=0.
\end{multline}
We estimate terms $\CI_{12}$ to $\CI_{15}$ as follows. Firstly, $\CI_{12}$ is computed as
\begin{align}
\CI_{12}&=\f{\dd}{\dd t}(\tilde{u},-v\tilde{v}_y)+(\tilde{u},\tilde{v}_t\tilde{v}_y)+
(\tilde{u},v\tilde{v}_{ty})+(s\tilde{u}_y,v\tilde{v}_y)\nonumber\\
&=\f{\dd}{\dd t}(\tilde{u},-v\tilde{v}_y)+(\tilde{u},-\b{v}_y\tilde{v}_t)+(\tilde{u}_y,-v\tilde{v}_t)
+(s\tilde{u}_y,v\tilde{v}_y).\nonumber
\end{align}
Replacing $\tilde{v}_t$ by the first equation of \eqref{4.1.1}, $\CI_{12}$ is further equal to
\begin{align}
\CI_{12}=\f{\dd}{\dd t}(\tilde{u},-v\tilde{v}_y)+(\tilde{u},-\b{v}_y(s\tilde{v}_y+\tilde{u}_y))+
(\tilde{u}_y,-v(s\tilde{v}_y+\tilde{u}_y))+s(\tilde{u}_y,v\tilde{v}_y),\nonumber
\end{align}
where the last three terms are bounded by $\eta\|\tilde{v}_y\|_{L^2}^2+C_{\eta}\{\|\tilde{u}\|_{L^2}^2+
\|\tilde{u}_y\|_{L^2}^2\}$ with an arbitrary constant $0<\eta<1$. As to $\CI_{13}$, it follows that
\begin{align}
\CI_{13}=\left(-\f{Tv_y}{v^2}+\f{T\b{v}_y}{\b{v}^2},-v\tilde{v}_y\right)=(Tv^{-1}\tilde{v}_y,\tilde{v}_y)
+\left(T\b{v}_y\left(\f{1}{\b{v}^2}-\f{1}{v^2}\right),-v\tilde{v}_y\right),\nonumber
\end{align}
where the first term on the right is good and the second inner product is bounded by
$\eta\|\tilde{v}_y\|_{L^2}^2+C_{\eta}\|\tilde{v}\|_{L^2}^2$ with an arbitrary constant $0<\eta<1$.
For $\CI_{14}$, one has
\begin{align}
\CI_{14}&=\left(\f{\mu u_{yy}}{v}-\f{\mu\b{u}_{yy}}{\b{v}},v\tilde{v}_y\right)+(\mu\tilde{u}_y,\tilde{v}_{yy})
+\left(\f{-\mu u_yv_y}{v^2}+\f{\mu\b{u}_y\b{v}_y}{\b{v}^2},v\tilde{v}_y\right)\nonumber\\
&=\left(\mu \b{u}_{yy}(v^{-1}-\b{v}^{-1}),v\tilde{v}_y\right)
+\left(\f{-\mu u_yv_y}{v^2}+\f{\mu\b{u}_y\b{v}_y}{\b{v}^2},v\tilde{v}_y\right).\nonumber
\end{align}
By Cauchy-Schwarz, the first inner product term on the right is bounded as
$$
\left|\left(\mu \b{u}_{yy}(v^{-1}-\b{v}^{-1}),v\tilde{v}_y\right)\right|\leq \eta{\|\tilde{v}_y\|_{L^2}^2}+C_{\eta}\|\tilde{v}\|_{L^2}^2.
$$
And the second one is computed as
\begin{multline}\label{ad.p1}
\left(\f{-\mu u_yv_y}{v^2}+\f{\mu\b{u}_y\b{v}_y}{\b{v}^2},v\tilde{v}_y\right)= \big(-\mu v^{-1}(\b{u}_y\tilde{v}_y+\b{v}_y\tilde{u}_y),\tilde{v}_y\big)\\
-\left(\mu\b{u}_y\b{v}_y\left(\f1{v^2}-\f{1}{\b{v}^2}\right),v\tilde{v}_y\right)
-(\mu v^{-1}\tilde{v}_y\tilde{u}_y,\tilde{v}_y).
\end{multline}
On the right-hand side of \eqref{ad.p1}, the last inner product term is bounded as
$$
\begin{aligned}
|(\mu v^{-1}\tilde{v}_y\tilde{u}_y,\tilde{v}_y)|&\leq \|\tilde{u}_y\|_{L^{\infty}}\|\tilde{v}_y\|_{L^2}^2\leq C\|\tilde{u}_y\|_{H^1}\|\tilde{v}_y\|_{L^2}^2\\
&\leq C\|\tilde{v}_y\|_{L^2}\|\tilde{u}_y\|_{H^1}^2+\|\tilde{v}_y\|_{L^2}^3\leq C\sqrt{\CE(t)}\{\|\tilde{u}_y\|_{H^1}^2+\|\tilde{v}_y\|_{L^2}^2\},
\end{aligned}
$$
and the rest terms are bounded by $C\{|v_+-v_-|+\sqrt{\CE(t)}\}\{\|\tilde{u}_y\|_{L^2}^2+\|\tilde{v}\|_{H^1}^2\}$. Thus, it follows from the above estimates that
$$
|\CI_{14}|\leq\eta\|\tilde{v}_y\|_{L^2}^2+C_{\eta}\|\tilde{v}\|_{L^2}^2+
C\{|v_+-v_-|+\sqrt{\CE(t)}\}\{\|\tilde{u}_y\|_{H^1}^2+\|\tilde{v}_y\|_{L^2}^2\},
$$
with an arbitrary constant $0<\eta<1$. For $\CI_{15}$, it holds by Cauchy-Schwarz that
\begin{align}
|\CI_{15}|\leq \eta\|\tilde{v}_y\|_{L^2}^2+C_{\eta}\{\|\tilde{v}\|_{L^2}^2+\|\tilde{\phi}_y\|_{L^2}^2\}.\nonumber
\end{align}
Plugging those estimates on $\CI_{12}$ to $\CI_{15}$ back into \eqref{4.2.33} and letting $\eta>0$ be chosen suitably small, we obtain that
\begin{multline}\label{4.2.34}
\f{\dd}{\dd t}\left\{\frac{\mu}{2}\|\tilde{v}_y\|_{L^2}^2+(\tilde{u},-v\tilde{v}_y)\right\}+c\|\tilde{v}_y\|_{L^2}^2\leq C\{\|\tilde{u}\|_{H^1}^2+\|\tilde{v}\|_{L^2}^2+\|\tilde{\phi}_y\|_{L^2}^2\}\\
+C\{|v_+-v_-|+\sqrt{\CE(t)}\}\{\|\tilde{v}_y\|_{L^2}^2+
\|\tilde{u}_{yy}\|_{L^2}^2\}.
\end{multline}
Recall \eqref{4.2.1} and \eqref{4.2.2}. Then \eqref{4.2.32} follows from integrating \eqref{4.2.34} over $[0,t]$ and letting $e_1$ and $\tilde{\vep}_1$ be small enough. The proof of Lemma \ref{lem4.5} is complete.
\qed

\medskip
\noindent{\bf Acknowledgments:}
Renjun Duan was supported by the General Research Fund (Project No.~14301515) from RGC of Hong Kong. Shuangqian Liu was supported by the grants from the National Natural Science Foundation of China under contracts 11471142, 11731008 and 11571136.  Zhu Zhang would thank the Institute for Analysis and Scientific Computing, Vienna University of Technology for their kind hospitality and also gratefully acknowledge the support from the Eurasia-Pacific Uninet Ernst Mach Grant.

\end{document}